\definecolor{azure}{rgb}{0.25, 0.41, 0.88}
\colorlet{darkblue}{blue!90!black}
\colorlet{darkorange}{orange!90!black}
\definecolor{darkpink}{rgb}{0.98, 0.38, 0.5}
\normalfont\fontsize{14}{15}\bfseries}{\thesection}{1em}{}
\normalfont\fontsize{12}{15}\bfseries}{\thesubsection}{1em}{}
\newcommand{\expected}{\mathbb{E}}
\newcommand{\prob}{\mathbb{P}}
\newcommand{\eps}{\varepsilon}
\DeclareFontFamily{OMX}{MnSymbolE}{}
\DeclareSymbolFont{MnLargeSymbols}{OMX}{MnSymbolE}{m}{n}
\DeclareFontShape{OMX}{MnSymbolE}{m}{n}{
    <-6>  MnSymbolE5
   <6-7>  MnSymbolE6
   <7-8>  MnSymbolE7
   <8-9>  MnSymbolE8
   <9-10> MnSymbolE9
  <10-12> MnSymbolE10
  <12->   MnSymbolE12
}{}
\DeclareFontShape{OMX}{MnSymbolE}{b}{n}{
    <-6>  MnSymbolE-Bold5
   <6-7>  MnSymbolE-Bold6
   <7-8>  MnSymbolE-Bold7
   <8-9>  MnSymbolE-Bold8
   <9-10> MnSymbolE-Bold9
  <10-12> MnSymbolE-Bold10
  <12->   MnSymbolE-Bold12
}{}
\let\llangle\@undefined
\let\rrangle\@undefined
\DeclareMathDelimiter{\llangle}{\mathopen}
                     {MnLargeSymbols}{'164}{MnLargeSymbols}{'164}
\DeclareMathDelimiter{\rrangle}{\mathclose}%
                     {MnLargeSymbols}{'171}{MnLargeSymbols}{'171}
\DeclareMathOperator*{\mymax}{\text{\normalfont max}}
\DeclareMathOperator*{\mymin}{\text{\normalfont min}}
\DeclareMathOperator*{\mysup}{\text{\normalfont sup}}
\DeclareMathOperator*{\myinf}{\text{\normalfont inf}}
\DeclareMathOperator*{\mylim}{\text{\normalfont lim}}
\DeclareMathOperator*{\mylog}{\text{\normalfont log}}
\DeclareMathOperator*{\mylimsup}{\text{\normalfont limsup}}
\DeclareMathOperator*{\de}{\hspace{-.1em}\text{\normalfont d\hspace{-.15em}}}
\numberwithin{equation}{section}
\newtheorem{theorem}{Theorem}[section]
\newtheorem{corollary}[theorem]{Corollary}
\newtheorem{lemma}[theorem]{Lemma}
\newtheorem{proposition}[theorem]{Proposition}
\theoremstyle{definition}
\newtheorem{definition}[theorem]{Definition}
\newtheorem{remark}[theorem]{Remark}
\newtheorem*{acknowledgements}{Acknowledgements}
\DeclareSymbolFont{stixletters}{LS1}{stix}{m}{it}
\DeclareMathAccent{\cev}{\mathord}{stixletters}{"91}
\DeclareMathAccent{\vec}{\mathord}{stixletters}{"92}
\newcommand\@affiliationlist{}
\renewcommand\affil[2][]{%
  \g@addto@macro\@affiliationlist{%
    \item[\textsuperscript{\scriptsize #1}] #2%
  }%
}
\newlength{\affillabelwidth}
\renewcommand{\maketitle}{
  \vspace*{0em}%
  \begin{center}{\LARGE\bfseries \@title \par}\end{center}
  \vspace{1em}%
  \begin{center}{\small \scshape\@date \par}\end{center}\par 
  \vspace{4em}
  \begin{list}{}{\setlength{\leftmargin}{1.5em}\setlength{\rightmargin}{1.5em}}
    \item[] {\raggedright\large\@author \par}
  \end{list}

  \settowidth{\affillabelwidth}{\textsuperscript{\scriptsize 8}} 
  \begin{list}{}{%
      \setlength{\leftmargin}{2.2em}
      \setlength{\rightmargin}{1.5em}
      \setlength{\labelwidth}{\affillabelwidth}
      \setlength{\labelsep}{0.2em}
      \setlength{\itemsep}{.2em}
      \setlength{\parsep}{0pt}
      \setlength{\topsep}{0pt}
  }
    {\raggedright\small \@affiliationlist}
  \end{list}\vspace{0.5em}
}
\renewenvironment{abstract}
  {\par                               
   \begin{list}{}{                        
      \setlength{\leftmargin}{1.5em}
      \setlength{\rightmargin}{1.5em}}
   \item[]                                   
   {\bfseries\abstractname}\par               
   \small\noindent\ignorespaces}             
  {\end{list}}
\begin{document}
\allowdisplaybreaks
\date{\today}
\title{Hydrodynamic Limit of the Symmetric Zero-Range Process with Slow Boundary}

\author{\bfseries O. Araújo\textsuperscript{1}, P. Gonçalves\textsuperscript{2}, A. Neumann\textsuperscript{3}, M. C. Ricciuti\textsuperscript{4}
}

\affil[1]{Department of Mathematics, Universidade Federal da Paraíba, Cidade Universitária, 58059-900, João Pessoa, PB, Brazil, Email: \href{mailto:oslenne.nogueira@ccen.ufpb.br}{\texttt{oslenne.nogueira@ccen.ufpb.br}}}

\affil[2]{Center for Mathematical Analysis, Geometry and Dynamical Systems, Instituto Superior Técnico, Universidade de Lisboa, Av. Rovisco Pais, 1049-001 Lisboa, Portugal, Email: \href{mailto:pgoncalves@tecnico.ulisboa.pt}{\texttt{pgoncalves@tecnico.ulisboa.pt}}}
 
\affil[3]{Mathematics and Statistics Institute, Universidade Federal do Rio Grande do Sul, Av. Bento Gonçalves, 9500, Porto Alegre, RS, Brazil, Email: \href{mailto:aneumann@mat.ufrgs.br}{\texttt{aneumann@mat.ufrgs.br}}}

\affil[4]{Department of Mathematics, Imperial College London, 180 Queen's Gate, London SW7 2AZ, United Kingdom, Email: \href{mailto:maria.ricciuti18@imperial.ac.uk}{\texttt{maria.ricciuti18@imperial.ac.uk}}}

\clearpage\maketitle
\thispagestyle{empty}

\begin{abstract} 
We study the hydrodynamic behaviour of the symmetric zero-range process on the finite interval $\{1, \ldots, N-1\}$ in contact with slow reservoirs at the boundary. Particles are injected and removed at sites $1$ and $N-1$ at rates that scale like $N^{-\theta}$ with $\theta\ge1$. Under mild assumptions on the jump rate and the sequence of initial measures, we show that the empirical density evolves on the diffusive scale according to a nonlinear heat equation, with boundary conditions reflecting the strength of the reservoirs.
\end{abstract}


\section{Introduction}
One of the central challenges in statistical physics is to understand heat transfer through a thermal bath. A common approach is to discretise the macroscopic domain with a parameter $N$, and then study systems of random particles evolving on this discrete space. These particles perform continuous-time random walks, with exponentially distributed waiting times that confer the Markov property, but their motion is constrained to reflect the underlying physical rules. A widely studied example is the boundary-driven \textit{exclusion process}, in which each site can host at most one particle: attempted jumps to occupied sites are suppressed, while jumps to empty sites occur according to a transition probability $p(x,y)$. The number of particles at site $x$ and at time $t$ is denoted by $\xi_t(x)$, so that $\xi_t(x)\in\{0,1\}$ for each $x$. This model has served as a fundamental testing ground for understanding nonequilibrium steady states and hydrodynamic behaviour.

In this work, we focus instead on a different particle system in which no restriction is imposed on the number of particles per site, so if $\eta_t(x)$ denotes the number of particles at site $x$ and time $t$, then $\eta_t(x)\in\mathbb{N}$. After the ring of an exponentially distributed clock, a particle jumps from $x$ to $y$ with probability $p(x,y)$, but at a rate weighted by a factor $g(\eta(x))$, where $g$ satisfies mild assumptions to ensure well-defined dynamics. This model, introduced by Spitzer in 1970 \cite{spi70}, is known as the \textit{zero-range process}, since the jump rate depends only on the occupation number at the departure site through $g$, and not on the state of the destination site. As in the exclusion process, this model can be coupled to boundary reservoirs, with injection and removal rates chosen consistently with the bulk dynamics to obtain meaningful information about stationary measures.

Our primary interest is to describe the hydrodynamic limit of the zero-range process in contact with stochastic reservoirs. In particular, we restrict attention to the symmetric, nearest-neighbour case, where the transition probability is given by $p(x,y)=\boldsymbol{1}_{\{x\pm1\}}(y)$. At the macroscopic level, we consider the interval $[0,1]$, which we discretise by $N$ at the microscopic level, so that particles evolve in the bulk $I_N := \{1,\dots,N-1\}$. The boundary sites $x=1$ and $x=N-1$ are connected to reservoirs that inject and remove particles. Without these additional dynamics, the number of particles would be conserved; however, the reservoirs destroy this conservation law, and our goal is to analyse the macroscopic consequences of this interaction. 

As previously mentioned, the exclusion process in contact with stochastic reservoirs has been extensively studied; see, for example, \cite{bmns17}, \cite{bgj19}, \cite{gon19}, \cite{bgj21}, \cite{gs22} and \cite{bcgs23}. The intensity of the boundary dynamics can be tuned by a parameter $\theta \in \mathbb{R}$: if $\theta<0$ (resp. $\theta>0$), the boundary dynamics are faster (resp. slower) than the bulk dynamics, while $\theta=0$ corresponds to equal intensities. For the exclusion process with nearest-neighbour jumps, the limiting PDE is supplemented with boundary conditions that depend on $\theta$. When $\theta < 1$, the boundary dynamics dominate and Dirichlet conditions arise, fixing the density at the boundary. When $\theta > 1$, the boundary evolves more slowly, and the system behaves macroscopically as isolated, giving rise to Neumann conditions with vanishing flux. The critical regime $\theta = 1$ is the most delicate: it leads to Robin-type conditions, which in the exclusion process turn out to be linear. We emphasise, however, that in this setting the invariant measure of this process is not of product form, and no explicit expression is available, except for the recent results of \cite{fg23}. 

By contrast, much less is known about the boundary-driven zero-range process, despite its physical relevance for modelling transport between a system and its environment. On the torus or the infinite lattice, the hydrodynamic limit is well understood (see \cite[Chapter 6]{kl99} and references therein), and it is given by the nonlinear parabolic equation $\partial_t \rho_t(u) =\Delta\Phi(\rho_t(u))$, where the flux function $\Phi$ is determined by the expectation of $g$ under the invariant measure. We also note that, recently, \cite{mmm24} obtained the hydrodynamic limit and convergence rates of the process evolving on the torus. In the case of stochastic reservoirs, some results exist for long-range jumps, but the nearest-neighbour case remains largely unexplored. A partial analysis was given in \cite{fmn21}, where the limiting PDE was identified heuristically, conditional on certain replacement lemmas at both the bulk and the boundary. For the exclusion process, such lemmas are usually unnecessary, since the discrete evolution equations are already closed in terms of the density of particles. For the zero-range process, however, the nonlinearity of the jump rate prevents this closure, making these replacements the main obstacle to a rigorous derivation. On the other hand, unlike exclusion, the zero-range process admits a fully explicit description of its invariant measure even with reservoirs and, remarkably, these are of product form; see \cite{lms05}. 

\vspace{1em}
\textbf{Our Contribution.} 
In this article, we rigorously establish the hydrodynamic limit of the symmetric, nearest-neighbour zero-range process in contact with stochastic reservoirs in the regimes $\theta\ge1$. To the best of our knowledge, this is the first rigorous result in this setting, closing the gap left open in \cite{fmn21}. Our result imposes two conditions on the sequence of initial measures $\{\mu^N\}_N$: that $\mu^N$ has a relative entropy of order at most $N$ with respect to the invariant state $\bar\nu_N$, and that $\mu^N$ is stochastically dominated by $\bar\nu_N$. The entropy condition is quite natural, as it allows us to make a change of measure to a reference measure which provides useful information about the dynamics. The stochastic domination condition is more technical, and we impose it because, under mild assumptions on $g$, it implies a monotonicity property -- \emph{attractiveness} -- which permits bounding expectations of monotone functions under $\mu^N$ by those under $\bar{\nu}_N$. Since $\bar{\nu}_N$ is explicit and of product form, such expectations can readily be controlled. Indeed, one of the main difficulties is that occupation numbers are unbounded and $g$ is generic, but these bounds allow us to introduce cutoffs on high-density configurations and restrict only to bounded configurations. 

Our approach loosely follows the program of \cite{kl99} for the zero-range process on the torus, but with substantial adaptations. The general idea is to show a \textit{replacement lemma} for the dynamics, which allows to substitute local averages involving the microscopic jump rate $g$ with local averages of the macroscopic flux $\Phi$. This is achieved in two stages: the \emph{one-block estimate}, which replaces such averages in microscopically small boxes, and the \emph{two-block estimate}, which bridges the substitution to macroscopically small boxes. In the torus case, the reference measure is the invariant one, and its translation invariance is extensively exploited, with an additional ingredient -- the \emph{equivalence of ensembles} -- also playing a crucial role. Since our invariant measure lacks these fundamental properties, we choose to work with averages under a different reference measure, namely the invariant measure of the dynamics restricted to the bulk. One of the key points is that the conditions imposed on the sequence of initial measures with respect to the invariant one can be transferred to this chosen reference measure. Although this measure is not invariant for the full dynamics, it retains the fundamental structure -- translation invariance and equivalence of ensembles -- to make this program applicable. The restriction to $\theta\ge1$ then arises because, as this measure lacks stationarity, there is a discrepancy between the Dirichlet form of the process and the carré-du-champ operator, which is of order $N^{1-\theta}$ and can thus only be controlled for $\theta\ge1$. 

Nonetheless, since we still lack full translation invariance of this reference measure, in both the one-block and two-block estimates we need a different approach to make the program work. Our strategy takes inspiration from the Boltzmann-Gibbs principle of \cite{fmts23}. There, the authors make use of the mixing properties of their process in the form of a \textit{spectral gap bound}, which allows them to apply the Rayleigh estimate of \cite[Theorem A3.1.1]{kl99}. The same spectral gap bound is verified for our zero-range process restricted to the bulk for a large class of functions $g$; see, for example, \cite{lsv96}, \cite{mor06} and \cite{nag10}. Here, we apply a similar argument to the one given for \cite[Lemma 3.1]{fmts23} to show the one-block estimate, and extend it to the two-block setting. To this end, our main tool is the derivation of a spectral gap bound for a ``coupled" version of our process, that is, a system evolving as a zero-range process on two separate boxes of the same size, but with particles also allowed to jump from the midpoint of one of the boxes to the midpoint of the other, and vice versa. We show that, provided that $g$ is such that an appropriate spectral gap bound holds for the original process, such a bound also holds for the coupled version. Then, a generalisation of the Rayleigh estimate to functions which are not necessarily mean zero, namely \cite[Corollary A3.1.2]{kl99}, allows us to conclude the bound.

Finally, we note that the hydrostatic limit follows as a corollary of our main result. Indeed, we are able to show that the stationary measure, which is explicit, satisfies the conditions of our main theorem, namely the association to a measurable and integrable profile (the stationary solution of the hydrodynamic equation), and the trivially verified entropy bound and stochastic domination.

\vspace{1em}
\textbf{Future Work.} 
Some related questions remain open. The first and perhaps most natural one is the derivation of the hydrodynamic limit when $\theta<1$. In this regime, because of the discrepancy between Dirichlet forms and carré-du-champ operators mentioned above, if one were to apply our strategy of proof, the reference measure would have to be chosen to be the stationary one. This would come at the cost of losing translation invariance and, additionally, an equivalence of ensembles and a spectral gap bound would need to be derived. 

A second direction concerns long-range dynamics. For the exclusion process, it is well known that finite-range jumps lead to the heat equation, while long-range jumps give rise to the fractional heat equation. In contrast, the hydrodynamic behaviour of the zero-range process with long jumps remains open. In \cite{bgjs22}, the hydrodynamic limit was established in expectation via connections with the exclusion process. We believe that our approach could be extended to the long-range case, yielding convergence in probability.

Finally, an important next step is the analysis of fluctuations around the hydrodynamic limit studied in this work. Starting from the invariant state, one would expect Gaussian fluctuations described by a generalised Ornstein-Uhlenbeck equation with coefficients depending on the stationary density profile. For the analogue version of the exclusion process, this problem has been addressed (see \cite{fgn17} and \cite{fgn19}), but for the zero-range process, a novel version of the replacement lemma (this time in the form of a Boltzmann-Gibbs principle) will once again be necessary.  

\vspace{1em}
\textbf{Outline.} 
In Section \ref{sec:model_stat}, we introduce the model and state our main result, namely Theorem \ref{thm:main}. Section \ref{sec:proof} is dedicated to its proof, which is organised into several parts. We begin by constructing auxiliary martingales obtained by applying Dynkin’s formula to suitable test functions, which will serve as the building blocks of our argument. Section \ref{sec:proof_tightness} establishes tightness of the sequence of empirical density measures. In Section \ref{sec:abs_cont}, we show that limit points are concentrated on absolutely continuous measures with respect to the Lebesgue measure. Section \ref{sec:characterisation} contains the characterisation of limit points as weak solutions to the hydrodynamic equation \eqref{eq:pde_zr}. Finally, the Appendix collects auxiliary results: an energy estimate, uniqueness of weak solutions of the hydrodynamic PDE, bounds on Dirichlet forms, and an entropy inequality.

\vspace{.6em}
\begin{acknowledgements} \small{O.A. acknowledges support from the National Council for Scientific and Technological Development (CNPq, Brazil) through a Junior Postdoctoral Fellowship (Grant No. 152638/2022-9), and from the Fundação para a Ciência e a Tecnologia (FCT, Portugal) under the project IT137-22-035 - UIDP/00324/2020 - Center for Mathematics of the University of Coimbra. P.G. expresses warm thanks to Fundação para a Ciência e Tecnologia FCT/Portugal for financial support through the projects UIDB/04459/2020, UIDP/04459/2020 and ERC/FCT. A.N. thanks to National Council for Scientific and Technological Development (CNPq
Brazil) through a Bolsa de Produtividade number 313916/2021-7. A.N. also thanks Ricardo Misturini and Susana Frómeta for valuable discussions on this model in \cite{fmn21}, which motivated the present study. M.C.R. gratefully acknowledges support from the Dean's PhD Scholarship at Imperial College London, the kind hospitality of Instituto Superior Técnico in March-April 2025 (when part of this work was carried out), and G-Research for supporting this visit through a travel grant.}
\end{acknowledgements}

\section{The Model and Statement of Results}\label{sec:model_stat}
For $N$ positive integer, let $I_N$ denote the discrete interval $\{1,\ldots, N-1\}$. This constitutes the discrete space where a set of indistinguishable particles will be moving around, and we will call it \textit{bulk}. For each $x \in I_N$, the occupation variable $\eta(x)$ will represent the number of particles at site $x$. The \textit{zero-range process} describes the random evolution of particles with no restriction on the total number of particles per site, and therefore its state space of configurations $\eta$ is the set $\Omega_N := \mathbb{N}^{I_N}$.

The rates of the process are defined through a function $g :\mathbb{N}\to [0, \infty)$ satisfying  $g(0)=0$. This last identity mirrors the fact that if there are no particles at the site $x$, then the rate for a jump is null.  Throughout this work, we assume that $g$ is strictly positive on the set of positive integers and that it has bounded variation in the following sense:
\begin{equation}\label{eq:function_g}
    g^*:=\mysup_{k\ge1}|g(k+1)-g(k)|<\infty.
\end{equation}
The previous condition ensures that the process is well-defined; see \cite{lig05} for details. 

We consider the Markov process whose infinitesimal generator $\mathcal{L}^N$ is given by
\begin{equation}\label{eq:generator}
    \mathcal{L}^N:=\mathcal{L}^N_0 + \kappa \mathcal{L}^N_b,
\end{equation} 
where $\kappa>0$ and where $\mathcal{L}^N_0$ and $\mathcal{L}^N_b$ represent the infinitesimal generators of the bulk dynamics and the boundary dynamics, respectively. These generators act on maps $f:\Omega_N \to \mathbb{R}$ via
\begin{equation*}
    \mathcal{L}^N_0f(\eta):=\sum_{\substack{x, y\in I_N \\ |x-y|=1}} g(\eta(x)) \left[f(\eta^{x,y})-f(\eta)\right]
\end{equation*} 	
and $\mathcal{L}^N_b:=\mathcal{L}^N_l+\mathcal{L}^N_r$, where 
\begin{equation}\label{eq:boundary_gen}
    \begin{split}
    &\mathcal{L}^N_l f(\eta):=\frac{\alpha}{N^\theta}\left[f(\eta^{1+})-f(\eta)\right] +\frac{\lambda g(\eta(1))}{N^\theta}\left[f(\eta^{1-})-f(\eta)\right],
    \\&\mathcal{L}^N_r f(\eta):=\frac{\beta}{N^\theta}\left[f(\eta^{(N-1)+})-f(\eta)\right]+\frac{\delta g(\eta(N-1))}{N^\theta}\left[f(\eta^{(N-1)-})-f(\eta)\right],
    \end{split}
\end{equation}
with
\begin{equation*}
    \eta^{x,y}(z):=\left\{\begin{array}{ll}
    \eta(z)-1, & \mbox{~~if~~} z=x,\\
    \eta(z)+1, &\mbox{~~if~~} z=y,\\
    \eta(z), & \mbox{~~if~~} z\ne x,y
    \end{array}\right.
\end{equation*}
and 
\begin{equation*}
    \eta^{x\pm}(z):=\left\{\begin{array}{ll}
    \eta(z) \pm 1, & \mbox{~~if~~} z=x,\\
    \eta(z), & \mbox{~~if~~} z\ne x.
    \end{array}\right.
\end{equation*}

Above, the parameters $\alpha, \lambda, \beta, \delta\ge 0$ represent the density of the reservoirs, while the parameter $\theta\ge 0$ plays the role of tuning the speed of the boundary dynamics. We will denote by $\{\eta_t=\eta_t^N, t\ge0\}$ the continuous-time Markov process on $\Omega_N$ with generator $N^2\mathcal{L}^N$. 

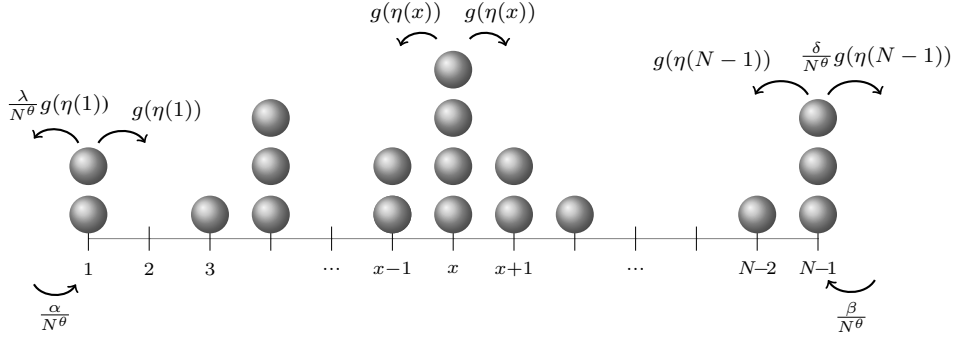
\begin{figure}
\begin{center}
\begin{tikzpicture}[thick, scale=0.8][h!]    
    \draw[step=1cm,gray,very thin] (-6,0) grid (6,0);   
    \foreach \y in {-6,...,6}{
    \draw[color=black,very thin] (\y,-0.2)--(\y,0.2);}    
    \draw (-6,-0.5) node [color=black] {$\scriptstyle{1}$};
    \draw (-5,-0.5) node [color=black] {$\scriptstyle{2}$};
    \draw (-4,-0.5) node [color=black] {$\scriptstyle{3}$};
    \draw (-2,-0.5) node [color=black] {$\scriptstyle{...}$};
    \draw (-1,-0.5) node [color=black] {$\scriptstyle{x-1}$};
    \draw (0,-0.5) node [color=black] {$\scriptstyle{x}$};
    \draw (1,-0.5) node [color=black] {$\scriptstyle{x+1}$};
    \draw (3,-0.5) node [color=black] {$\scriptstyle{...}$};
    \draw (5,-0.5) node [color=black] {$\scriptstyle{N\!-\!2}$};
    \draw (6,-0.5) node [color=black] {$\scriptstyle{N\!-\!1}$};
        
    \node[ball color=black!30!, shape=circle, minimum size=0.5cm] at (-6,1.2) {};
    \node[ball color=black!30!, shape=circle, minimum size=0.5cm]  at (-6.,0.4) {};
    \node[shape=circle,minimum size=0.63cm] (A) at (-6.,2) {};
    \node[shape=circle,minimum size=0.5cm] (C) at (-6.,1.3) {};
    \node[shape=circle,minimum size=0.5cm] (D) at (-4.9,1.3) {};
    \path [->] (C) edge[bend left=60] node[above right] {{\footnotesize{$g(\eta(1))$}}} (D);
    \node[shape=circle,minimum size=0.63cm] (OO) at (-7.1,1.3) {};
    \path [<-] (OO) edge[bend left=60] node[above] {{\footnotesize{$\frac{\lambda}{N^\theta}g(\eta(1)$)}}} (C);	
    \node[shape=circle,minimum size=0.63cm] (OO0) at (-7.1,-0.4) {};
    \node[shape=circle,minimum size=0.63cm] (C0) at (-6,-0.4) {};		
    \path [<-] (C0) edge[bend left=60] node[below ] {{\footnotesize{$\frac{\alpha}{N^\theta}$}}} (OO0);
    
    \node[ball color=black!30!, shape=circle, minimum size=0.5cm]  at (6.,0.4) {};
    \node[ball color=black!30!, shape=circle, minimum size=0.5cm] at (6.,1.2) {};
    \node[ball color=black!30!, shape=circle, minimum size=0.5cm] at (6.,2) {};
    \node[shape=circle,minimum size=0.5cm] (E) at (6.,2.1) {};
    \node[shape=circle,minimum size=0.5cm] (F) at (4.8,2.1) {};
    \path [->] (E) edge[bend right =60] node[above left] {{\footnotesize{$g(\eta(N-1))$}}} (F);
    \node[ball color=black!30!, shape=circle, minimum size=0.5cm]  at (5.,0.4) {};
    \node[shape=circle,minimum size=0.5cm] (H) at (7.2,2.1) {};
    \path [->] (E) edge[bend left=60] node[above] {
    {{\footnotesize{$\qquad\frac{\delta }{N^\theta}g(\eta(N-1))$}}}}(H);		
    \node[shape=circle,minimum size=0.5cm] (H0) at (7.1,-0.4) {}; 
    \node[shape=circle,minimum size=0.5cm] (E0) at (6,-0.4) {};		
    \path [->] (H0) edge[bend left=60] node[below ] {{\footnotesize{$\frac{\beta}{N^\theta}$}}} (E0);
    
    \node[ball color=black!30!, shape=circle, minimum size=0.5cm] (O) at (-4,0.4) {};
    \node[ball color=black!30!, shape=circle, minimum size=0.5cm]  at (-3.,0.4) {};
    \node[ball color=black!30!, shape=circle, minimum size=0.5cm]  at (-3.,1.2) {};
    \node[ball color=black!30!, shape=circle, minimum size=0.5cm]  at (-3.,2) {};
    \node[ball color=black!30!, shape=circle, minimum size=0.5cm]  at (-1.,0.4) {};
    \node[ball color=black!30!, shape=circle, minimum size=0.5cm]  at (-1.,1.2) {};		
    \node[ball color=black!30!, shape=circle, minimum size=0.5cm]  at (0.,0.4) {};
    \node[ball color=black!30!, shape=circle, minimum size=0.5cm]  at (0.,1.2) {};
    \node[ball color=black!30!, shape=circle, minimum size=0.5cm]  at (0.,2) {};
    \node[ball color=black!30!, shape=circle, minimum size=0.5cm]  at (0.,2.8) {};		
    \node[ball color=black!30!, shape=circle, minimum size=0.5cm]  at (1.,0.4) {};
    \node[ball color=black!30!, shape=circle, minimum size=0.5cm]  at (1.,1.2) {};		
    \node[ball color=black!30!, shape=circle, minimum size=0.5cm]  at (2.,0.4) {};		
    \node[ball color=black!30!, shape=circle, minimum size=0.5cm]  at (-1,0.4) {};
        
    \node[shape=circle,minimum size=0.63cm] (G1) at (0,3) {};
    \node[shape=circle,minimum size=0.63cm] (H1) at (1,2.8) {};
    \path [->] (G1) edge[bend left =60] node[above] {{\footnotesize{$\quad g(\eta(x))$}}} (H1);		
    \node[shape=circle,minimum size=0.63cm] (J1) at (-1,2.8) {};
    \path [<-] (J1) edge[bend left =60] node[above] {{\footnotesize{$g(\eta(x))\quad$}}} (G1);
\end{tikzpicture}
\caption{Microscopic dynamics of the symmetric zero-range process with open boundaries.}\label{fig.1}
\end{center}
\end{figure}

\subsection{Invariant Measures}
Let $\varphi_N:I_N\to[0, \infty)$ be a bounded, measurable function. We define the product measure $\bar{\nu}_{\varphi_N}$ on $\Omega_N$ with marginals given by 
\begin{equation}\label{eq:prod_measure}
    \bar{\nu}_{\varphi_{N}}\{\eta: \eta(x)=k\}:=\frac{1}{Z(\varphi_N(x))}\frac{(\varphi_N(x))^k}{g(k)!},
\end{equation}
for $x\in I_N$. Here, $g(k)!$ stands for $\prod_{1\le j\le k} g(j)$, with $g(0)!:=1$, and $Z$ is the partition function
\begin{equation*}
    Z(\varphi):=\sum_{k\geq0}\frac{\varphi^k}{g(k)!}.
\end{equation*} 
Calling $\varphi^*$ its radius of convergence, in order for the definition \eqref{eq:prod_measure} to make sense, we need the function $\varphi_N$ to satisfy
\begin{equation*}
    \varphi_N(x)\le \varphi^*
\end{equation*}
uniformly in $x\in I_N$ and $N\ge 1.$ 
Denoting by $E_{\nu}[\,\cdot\,]$ the expectation with respect to a measure $\nu$ on $\Omega_N$, a simple computation shows that
\begin{equation*}
    E_{\bar{\nu}_{\varphi_N}}[\eta(x)] = \frac{1}{Z(\varphi_N(x))}\sum_{k \ge 0} k\frac{(\varphi_N(x))^k}{g(k)!}.
\end{equation*}
Let now $R:[0,\varphi^*)\to\mathbb{R}$ be the function defined as the power series
\begin{equation*}
    R(\varphi ):= \frac{1}{Z(\varphi)}\sum_{k\ge0} k\frac{\varphi^k}{g(k)!},
\end{equation*}
where $\varphi^{*}$ is again the radius of convergence of the partition function $Z$. Then, we can rewrite the expected value of the occupation variable at site $x$ under $\bar{\nu}_{\varphi_N}$ as
\begin{equation}\label{eq:R}
    E_{\bar\nu_{\varphi_N}}[\eta(x)] = R(\varphi_N(x)),\ \ \ x\in I_N.
\end{equation}

The next lemma is reminiscent of Lemma 3.1 of \cite{fmn21}; however, in this case we are considering more general rates in the definition of the model, which allow injection and removal of particles on both sides of the segment. This result can be found in \cite[Equation 20]{lms05}.
\begin{lemma}\label{lemma:inv_measure}
For $\alpha, \lambda, \beta, \delta, \theta$ and $N$ satisfying 
\begin{equation}\label{eq:params_condition}
    \mymax\left\{\frac{\alpha\delta(N-2)+(\alpha+\beta)N^\theta}{\lambda\delta(N-2)+(\lambda+\delta)N^\theta}, \frac{\beta\lambda(N-2)+(\alpha+\beta)N^\theta}{\lambda\delta(N-2)+(\lambda+\delta)N^\theta}\right\}<\varphi^*,
\end{equation}
the measure $\bar\nu_{\bar{\varphi}_N}$, which will be denoted by $\bar{\nu}_{N}$, defined in \eqref{eq:prod_measure} with fugacity profile
\begin{equation}\label{eq:fugacity}
    \bar \varphi_N(x):=\displaystyle\frac{-(\alpha \delta - \beta \lambda)(x-1)+\alpha \delta (N-2) +(\alpha+\beta)N^{\theta}}{\lambda \delta (N-2)+(\lambda+ \delta )N^{\theta}}, \quad x \in I_N,
\end{equation}
is the unique invariant probability for the Markov process on $\Omega_N$ with infinitesimal generator $L_N$, defined in \eqref{eq:generator}. In what follows, we will call $\bar{\nu}_N$ the non-equilibrium stationary state (NESS).
\end{lemma}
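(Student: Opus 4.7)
My plan is to reduce the invariance of a product measure $\bar\nu_\varphi$ of the form \eqref{eq:prod_measure} to a discrete boundary-value problem for its fugacity profile $\varphi:I_N\to[0,\varphi^*)$, to solve this problem explicitly, and to conclude uniqueness with a short irreducibility argument. The central algebraic tool is the identity
\begin{equation*}
\int g(\eta(x))\,h(\eta)\,d\bar\nu_\varphi \;=\; \varphi(x)\int h(\eta + e_x)\,d\bar\nu_\varphi,
\end{equation*}
valid for every bounded $h:\Omega_N\to\mathbb{R}$, which follows directly from the product form \eqref{eq:prod_measure} after the index shift $k\mapsto k+1$ in the marginal at site $x$ (using $g(k)! = g(k)\cdot g(k-1)!$ and $g(0)=0$); here $e_x$ denotes the unit vector at $x\in I_N$.

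The first step is to apply this identity to $\int \mathcal{L}^N f\,d\bar\nu_\varphi$ for an arbitrary bounded local $f$. For the bulk, each elementary term $\int g(\eta(x))[f(\eta^{x,y})-f(\eta)]\,d\bar\nu_\varphi$ rewrites as $\varphi(x)\int[f(\eta+e_y)-f(\eta+e_x)]\,d\bar\nu_\varphi$, and summing over nearest-neighbour bonds and reindexing yields
\begin{equation*}
\int \mathcal{L}_0^N f\,d\bar\nu_\varphi \;=\; \sum_{x\in I_N} (\Delta_N\varphi)(x)\int f(\eta+e_x)\,d\bar\nu_\varphi,
\end{equation*}
where $\Delta_N$ is the discrete Laplacian on $I_N$ with the Neumann-type values $(\Delta_N\varphi)(1)=\varphi(2)-\varphi(1)$ and $(\Delta_N\varphi)(N-1)=\varphi(N-2)-\varphi(N-1)$. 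At the left boundary, using the key identity once on the injection and once on the removal term gives
\begin{equation*}
\int \mathcal{L}_l^N f\,d\bar\nu_\varphi \;=\; \frac{\alpha-\lambda\varphi(1)}{N^\theta}\int\bigl[f(\eta+e_1)-f(\eta)\bigr]d\bar\nu_\varphi,
\end{equation*}
and an analogous formula holds at the right endpoint with $(\varphi(N-1),\beta,\delta)$ in place of $(\varphi(1),\alpha,\lambda)$. Testing the resulting expression against sufficiently many local functions (for instance $f = f_y(\eta(y))$ for varying $y\in I_N$, exploiting that $\bar\nu_\varphi\{\eta(y)=0\}>0$) decouples the linear functionals $f\mapsto\int f\,d\bar\nu_\varphi$ and $f\mapsto\int f(\eta+e_x)\,d\bar\nu_\varphi$, and shows that $\bar\nu_\varphi$ is invariant for $\mathcal{L}^N$ if and only if $\varphi$ is discrete-harmonic on $\{2,\dots,N-2\}$ and satisfies the Robin-type relations
\begin{equation*}
\varphi(2)-\varphi(1) = \frac{\lambda\varphi(1)-\alpha}{N^\theta}, \qquad \varphi(N-2)-\varphi(N-1) = \frac{\delta\varphi(N-1)-\beta}{N^\theta}
\end{equation*}
at the two endpoints (absorbing $\kappa$ into the boundary parameters for notational simplicity).

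Harmonicity in the bulk forces $\varphi(x)=a+b(x-1)$ for constants $a,b$, and substituting this ansatz into the two boundary equations yields an explicit $2\times 2$ linear system whose unique solution reproduces \eqref{eq:fugacity} after a short computation. The condition \eqref{eq:params_condition} is precisely $\max\{\bar\varphi_N(1),\bar\varphi_N(N-1)\}<\varphi^*$, and since a linear function on $[1,N-1]$ attains its extrema at the endpoints this ensures $\bar\varphi_N(x)<\varphi^*$ for every $x\in I_N$ and hence that $\bar\nu_N$ is a well-defined probability measure on $\Omega_N$. Uniqueness then follows from irreducibility: under the standing assumptions any configuration can reach the empty one by successive bulk jumps toward the boundary followed by annihilations at sites $1$ or $N-1$, and any configuration is reachable from the empty one by successive injections and bulk jumps, so the chain has a single closed communicating class on $\Omega_N$ and admits at most one invariant probability. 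The delicate point is the boundary calculation, where the mismatch between the constant injection rate and the $g(\eta(1))$-weighted removal rate must be reconciled by two separate applications of the key identity before the clean combination $\alpha-\lambda\varphi(1)$ emerges; once that is done the remainder of the argument is essentially linear algebra.
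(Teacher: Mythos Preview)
The paper does not actually prove this lemma; it is stated and attributed to \cite[Equation 20]{lms05}. Your proposal supplies the standard direct argument that one would expect to find in that reference: reduce invariance of a product measure to the adjoint equations for the fugacity via the shift identity $\int g(\eta(x))h\,d\bar\nu_\varphi=\varphi(x)\int h(\eta+e_x)\,d\bar\nu_\varphi$, obtain a discrete Laplace equation with Robin boundary data, solve the resulting $2\times 2$ linear system, and conclude uniqueness by irreducibility. The computation is correct, including the decoupling step: testing against single-site functions $f_y(\eta(y))$ and using that $\sum_x c_x$ automatically equals the constant term $C_0$ forces each coefficient $c_y$ to vanish, which is both necessary and sufficient.

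Two minor remarks. First, your phrase ``absorbing $\kappa$ into the boundary parameters'' is slightly misleading: replacing $(\alpha,\lambda,\beta,\delta)$ by $(\kappa\alpha,\kappa\lambda,\kappa\beta,\kappa\delta)$ does not leave the fugacity formula \eqref{eq:fugacity} invariant, since numerator and denominator are not homogeneous of the same degree in these parameters. The formula as stated in the paper corresponds to $\kappa=1$; your derivation is correct for general $\kappa$ and would produce the $\kappa$-dependent version. Second, the irreducibility argument tacitly assumes that at least one injection rate and at least one removal rate are strictly positive, which is implicit in the well-posedness of the formula (nonzero denominator) but worth stating explicitly.
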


\begin{remark}\label{remark:limit_fugacity_density}
Note that, if $\alpha, \lambda, \beta, \delta>0$, the asymptotic fugacity profile $\bar\varphi_\theta:[0, 1]\to[0, \infty)$ is given by:
\begin{enumerate}[i)]
    \item $0\le \theta<1$: $\bar\varphi_\theta(u)=\frac{-(\alpha\delta-\beta\lambda)u+\alpha\delta}{\lambda\delta}$;
    \item $\theta=1$: $\bar\varphi_\theta(u)=\frac{-(\alpha\delta-\beta\lambda)u+\alpha\delta+\alpha+\beta}{\lambda\delta+\lambda+\delta}$;
    \item $\theta>1$: $\bar\varphi_\theta(u)=\frac{\alpha+\beta}{\lambda+\delta}$.
\end{enumerate}
Then, the limit density profile $\bar\rho_\theta:[0, 1]\to[0, \infty)$ is given $\bar\rho_\theta(u)=R(\bar\varphi_\theta(u))$. In particular, a linear fugacity profile does \textit{not}, in general, imply a linear density profile. 
\end{remark}
 
Throughout, we will assume that $\mylim_{\varphi\uparrow\varphi*}Z(\varphi)=\infty.$ Then, as proved in \cite[Section 2.3]{kl99}, the range of $R$ is $[0, \infty)$, and $R$ is strictly increasing; then the map 
\begin{equation}\label{eq:Phi}
    \Phi:=R^{-1}:[0, \infty)\to[0, \varphi^*)
\end{equation} is well defined, and is a strictly increasing function.
Now, for any $x\in I_N$, let $\alpha_x \ge 0$: setting $\varphi_N(x):=\Phi(\alpha_x)$, we can define the measure $\nu_{\alpha}$ on $\Omega_N$ by 
\begin{equation}\label{eq:measure_alpha}
    \nu_{\alpha}(\cdot):={\bar\nu}_{\Phi(\alpha_\cdot)}(\cdot).
\end{equation}
This outputs a family $\{\nu_{\alpha}\}_{\alpha \ge 0}$ of product measures parametrised by the density: indeed, the expected value of the occupation variables $\eta(x)$ under $\nu_{\alpha}$ is equal to $\alpha$, namely
\begin{equation}\label{eq:expectation_alpha_eta}
    E_{\nu_{\alpha}}[\eta(x)]=R(\Phi(\alpha_x))=\alpha_x 
\end{equation}
for all $x\in I_N$ and any choice of parameters $\alpha_x\ge0$. Moreover, a simple computation shows that the function $\Phi(\alpha_x)$ is the expected value of the jump rate $g(\eta(x))$ under the measure:
\begin{equation}\label{eq:expectation_alpha_g}
    \Phi(\alpha_x)=E_{\nu_\alpha}[g(\eta(x))]\
\end{equation}
for every $x\in I_N$ and any $\alpha_x \geq 0$.
A particular case of the above happens when the values $\alpha_x$ are all equal to the same $\alpha\ge0$: then, equations \eqref{eq:expectation_alpha_eta} and \eqref{eq:expectation_alpha_g} become
\begin{equation*}
    E_{\nu_{\alpha}}[\eta(x)]=R (\Phi(\alpha))=\alpha\ \ \ \text{and} \ \ \ \Phi(\alpha)=E_{\nu_\alpha}[g(\eta(x))]
\end{equation*}
for every $x\in I_N$.

We are now ready to show the following result, which will be used repeatedly.
\begin{lemma}\label{lemma:Phi_lips} 
Recall the definition of $g^*$ given in \eqref{eq:function_g}. The function $\Phi:[0, \infty)\to [0,\varphi^*)$ appearing in \eqref{eq:Phi} is uniformly Lipschitz on $[0, \infty)$ with constant $g^*$:
\begin{equation*}
   \left|\Phi(\varphi_2)-\Phi(\varphi_1)\right|=\left|E_{\nu_{\varphi_2}}[g(\eta(x))]-E_{\nu_{\varphi_1}}[g(\eta(x))]\right| \le g^*|\varphi_2-\varphi_1|
\end{equation*}
for all non-negative reals $\varphi_1$ and $\varphi_2$.
\end{lemma}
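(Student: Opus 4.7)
The equality in the displayed inequality is just \eqref{eq:expectation_alpha_g} applied at $\varphi_1$ and $\varphi_2$, so the only content is the Lipschitz bound itself. My plan is to obtain this via a single-site monotone coupling, exploiting that the marginals of $\nu_\alpha$ at a fixed site form a stochastically ordered family as $\alpha$ varies. The $g^*$ factor will then drop out by the fact that $g$ is $g^*$-Lipschitz on $\mathbb{N}$ (immediate by telescoping from \eqref{eq:function_g}).

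Without loss of generality, assume $\varphi_2 \ge \varphi_1$. Since $\Phi$ is strictly increasing, $\Phi(\varphi_2) \ge \Phi(\varphi_1)$. I would first observe that the family $\{\bar\nu_\varphi\}_{\varphi \in [0,\varphi^*)}$ has monotone likelihood ratio in $\varphi$: for $\varphi' > \varphi$, the ratio of single-site marginals
\begin{equation*}
\frac{\bar\nu_{\varphi'}\{\eta(x) = k\}}{\bar\nu_\varphi\{\eta(x) = k\}} = \frac{Z(\varphi)}{Z(\varphi')} \left(\frac{\varphi'}{\varphi}\right)^k
\end{equation*}
is increasing in $k$, which is the standard sufficient condition for stochastic dominance of $\mathbb{N}$-valued laws. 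Hence by Strassen's theorem --- or, concretely, the quantile coupling with a common uniform random variable --- there exists a pair $(X_1, X_2)$ on some probability space with $X_i$ distributed as $\eta(x)$ under $\nu_{\varphi_i}$ and $X_1 \le X_2$ almost surely.

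Using the discrete $g^*$-Lipschitz property $g(X_2) - g(X_1) \le g^*(X_2 - X_1)$, which holds almost surely on the coupling, and taking expectations together with \eqref{eq:expectation_alpha_eta} and \eqref{eq:expectation_alpha_g},
\begin{equation*}
\Phi(\varphi_2) - \Phi(\varphi_1) = E[g(X_2) - g(X_1)] \le g^* \, E[X_2 - X_1] = g^* (\varphi_2 - \varphi_1).
\end{equation*}
Combined with the symmetric role of $\varphi_1,\varphi_2$, this is the desired bound. The only nontrivial step is the verification of stochastic monotonicity through the monotone likelihood ratio property; once that is in place, the conclusion is a one-line consequence of the monotone coupling and the Lipschitz bound on $g$. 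An alternative route, should a coupling-free argument be preferred, would be to show $\Phi'(\alpha) = 1/R'(\Phi(\alpha)) \le g^*$ by combining $R'(\varphi) = \mathrm{Var}_{\bar\nu_\varphi}(\eta(x))/\varphi$, the zero-range integration-by-parts identity $\mathrm{Cov}_{\bar\nu_\varphi}(g(\eta),\eta) = \varphi$, and Cauchy-Schwarz --- but the coupling proof above is both shorter and structurally clearer.
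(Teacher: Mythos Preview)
Your proof is correct and follows essentially the same approach as the paper's: both construct a monotone coupling of $\nu_{\varphi_1}$ and $\nu_{\varphi_2}$, then use the telescoped bound $|g(m)-g(n)|\le g^*|m-n|$ together with $E_{\nu_{\varphi_i}}[\eta(x)]=\varphi_i$. The only cosmetic differences are that you couple at a single site and verify stochastic dominance explicitly via the monotone likelihood ratio, whereas the paper couples the full product measures, asserts $\nu_{\varphi_1}\le\nu_{\varphi_2}$ without proof, and invokes \cite[Theorem II.2.4]{lig05} (Strassen) for the existence of the ordered coupling.
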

\begin{proof}
Fix $\varphi_1 \leq \varphi_2$. Recall from \eqref{eq:expectation_alpha_g} that $\Phi(\alpha)$ is the expected value of $g(\eta(x))$, $x \in I_N$ under the measure $\nu_{\alpha}$:
\begin{equation}\label{eq:alpha}
    \Phi(\alpha)=E_{\nu_{\alpha}}[g(\eta(x))], \ \ \ x \in I_N.
\end{equation}
Denote by $\nu_{\varphi_1,\varphi_2}$ a measure on $\mathbb{N}^{I_N} \times \mathbb{N}^{I_N}$ whose first marginal is equal to $\nu_{\varphi_1}$, the second marginal is equal to $\nu_{\varphi_2}$, and which is concentrated on configurations $(\xi_1,\xi_2)$ such that $\xi_1 \le \xi_2$. The existence of this measure is guaranteed by \cite[Theorem II.2.4]{lig05} because $\nu_{\varphi_1} \le \nu_{\varphi_2}$. Hence, we have 
\begin{equation*}
    \left| \Phi(\varphi_2)-\Phi(\varphi_1)\right|=  E_{\nu_{\varphi_1, \varphi_2}} \left| g(\xi_2(x))-g(\xi_1(x))\right| \le g^* E_{\nu_{\varphi_1,\varphi_2}}\left|\xi_2(x)-\xi_1(x)\right|.
\end{equation*}
Since the measure $\nu_{\varphi_2,\varphi_1}$ is concentrated on configurations $\xi_1 \le \xi_2$ we can remove the absolute value in the last expression and obtain that is equal to $g^*(\varphi_2-\varphi_1)$.
\end{proof}

\begin{remark}\label{remark:Psi} 
The cylinder function $g(\eta(x))$ does not play any particular role in Lemma \ref{lemma:Phi_lips}. The statement applies to a broad class of cylinder functions that we now introduce. We say that a cylinder function $\Psi: \mathbb{N}^{I_N} \to \mathbb{R}$ is Lipschitz if there exists a constant $C_0$ such that 
\begin{equation}\label{eq:constant_C0}
    \left|\Psi(\eta)-\Psi(\xi)\right| \le C_0 \sum_{x \in I_N} |\eta(x)-\xi(x)|
\end{equation}
for all configurations $\eta, \xi\in \Omega_N$. Given a Lipschitz function $\Psi$, if we take $\xi$ to be the configuration with no particles in \eqref{eq:constant_C0}, we obtain that there exist finite constants $C_0$ and $C_1$ such that 
\begin{equation}\label{eq:Psi_bounded}
    \left|\Psi(\eta)\right| \le C_1+C_0 \sum_{x \in I_N} \eta(x) 
\end{equation}
for every configuration $\eta$. A cylinder function $\Psi$ is said to have sub-linear growth if, for each $\delta>0$, there exists a finite constant $C(\delta)$ such that
\begin{equation*}
    |\Psi(\eta)| \le C(\delta)+ \delta\sum_{x \in I_N} \eta(x)
\end{equation*}
for every configuration $\eta\in\Omega_N$. Every bounded cylinder function has a sub-linear growth. For a cylinder function $\Psi$ satisfying \eqref{eq:Psi_bounded}, we denote by $\tilde{\Psi}:[0, \infty)] \to \mathbb{R}$ the function whose value at some density $\alpha \ge 0$ is equal to the expectation of $\Psi$ with respect to the invariant measure $\nu_{\alpha}$, namely
\begin{equation}\label{eq:tildePsi}
    \tilde{\Psi}(\alpha) := E_{\nu_{\alpha}} [\Psi(\eta)].
\end{equation}
With the notation introduced, we have the identity $\tilde{g}=\Phi$, because of \eqref{eq:alpha}.
\end{remark}

The proof of Lemma \ref{lemma:Phi_lips} and Remark \ref{remark:Psi} show that $\tilde{\Psi}$ is a uniform Lipschitz function provided $\Psi$ is a Lipschitz cylinder function:

\begin{corollary}\label{corol:Psi}
Let $\Psi$ be a Lipschitz cylinder function. The function $\tilde{\Psi}:[0, \infty)\to\mathbb{R}$ defined by \eqref{eq:tildePsi} is uniformly Lipschitz on $[0, \infty)$: there exists a finite constant $C_0=C_0(\Psi)$ such that
\begin{equation*}
    \left|\tilde{\Psi}(\alpha_2)-\tilde{\Psi}(\alpha_1)\right| \le C_0|\alpha_2-\alpha_1|
\end{equation*}
for all $\alpha_1,\alpha_2\ge 0$.
\end{corollary}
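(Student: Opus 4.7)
The plan is to mirror the proof of Lemma \ref{lemma:Phi_lips} almost verbatim, replacing the single-site function $g(\eta(x))$ by the general Lipschitz cylinder function $\Psi$. Fix $\alpha_1\le\alpha_2$ and note that the product measures $\nu_{\alpha_1}$ and $\nu_{\alpha_2}$, being of the form \eqref{eq:prod_measure} with constant fugacities $\Phi(\alpha_1)\le\Phi(\alpha_2)$ (recall that $\Phi$ is increasing), are stochastically ordered: $\nu_{\alpha_1}\le\nu_{\alpha_2}$. By \cite[Theorem II.2.4]{lig05}, there exists a coupling measure $\nu_{\alpha_1,\alpha_2}$ on $\mathbb{N}^{I_N}\times\mathbb{N}^{I_N}$ whose marginals are $\nu_{\alpha_1}$ and $\nu_{\alpha_2}$ and which is concentrated on pairs $(\xi_1,\xi_2)$ with $\xi_1\le\xi_2$ coordinate-wise.

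Writing $\tilde\Psi(\alpha_i)=E_{\nu_{\alpha_1,\alpha_2}}[\Psi(\xi_i)]$ and using Jensen's inequality together with the Lipschitz bound \eqref{eq:constant_C0}, I would obtain
\begin{equation*}
    \left|\tilde{\Psi}(\alpha_2)-\tilde{\Psi}(\alpha_1)\right|
    \le E_{\nu_{\alpha_1,\alpha_2}}\left|\Psi(\xi_2)-\Psi(\xi_1)\right|
    \le C_0\sum_{x\in I_N} E_{\nu_{\alpha_1,\alpha_2}}\left|\xi_2(x)-\xi_1(x)\right|.
\end{equation*}
Since the coupling is concentrated on $\xi_1\le\xi_2$, the absolute value can be dropped, and each term in the sum equals $\alpha_2-\alpha_1$ by the marginal property and \eqref{eq:expectation_alpha_eta}.

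The only point that requires attention is that the sum over $x\in I_N$ could a priori depend on $N$. However, since $\Psi$ is a \textit{cylinder} function, it depends on $\eta$ only through the coordinates in some fixed finite set $\Lambda$, independent of $N$, so only those $|\Lambda|$ terms in \eqref{eq:constant_C0} contribute. Setting the new Lipschitz constant to $C_0(\Psi):=C_0\,|\Lambda|$ yields
\begin{equation*}
    \left|\tilde{\Psi}(\alpha_2)-\tilde{\Psi}(\alpha_1)\right|\le C_0(\Psi)\,(\alpha_2-\alpha_1),
\end{equation*}
which is the claim. I do not anticipate a substantial obstacle here: the whole content of the corollary is really the coupling-plus-Lipschitz argument already used for $g$, and the finite support of a cylinder function keeps the constant independent of $N$.
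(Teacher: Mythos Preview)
Your proof is correct and follows exactly the approach the paper intends: the paper does not give a separate argument for the corollary but simply states that the proof of Lemma~\ref{lemma:Phi_lips} together with Remark~\ref{remark:Psi} yields the result, and you have spelled out precisely that coupling-plus-Lipschitz argument. Your observation that only the finitely many coordinates in the support $\Lambda$ of the cylinder function contribute is the one extra detail needed beyond the single-site case, and you handle it correctly.
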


\subsection{Assumptions on the Jump Rate}
\hspace{.5cm} \textsc{\bfseries Assumption (ND).} We assume the following condition on the jump rate: 
\begin{equation}\label{g_non_decreasing}
    g(\cdot) \text{ is non decreasing}.
\end{equation}
As proved below, under this assumption our zero-range processes are \textit{attractive}. This notion means that the processes are monotone in the sense that  their semigroups $S(t):=\mathbb E_{\eta}[f(\eta_t)]$ preserve the partial order: $\mu_1\le\mu_2\Rightarrow \mu_1S(t)\le \mu_2S(t)$ for all $t\ge0$.\footnote{For a measure $\mu$ on $\Omega_N$, the measure $\mu S(t)$ represents the time-$t$ distribution of the process with semigroup $S(t)$, and $\expected_{\eta}[f(\eta_t)]$ denotes the expectation of $f(\eta_t)$ with respect to $\prob_{\mu^N}$ where the starting measure is $\mu^N=\delta_\eta$ for a fixed configuration $\eta$.} Above, the partial order between measures is defined as follows. We say that a function $f:\Omega_N\to\mathbb R$ is  \textit{monotone} if $f(\eta)\le f(\xi)$ for all $\eta\le\xi$ (this last inequality means that  $\eta(x)\le \xi(x)$ for every $x\in I_N$), and we say that 
\begin{equation}\label{eq:def_domination}
    \mu_1\le\mu_2  \ \ \ \text{ if }\ \ \ \int f \de \mu_1\le \int f\de \mu_2
\end{equation}
for all monotone functions $f:\Omega_N\to\mathbb R$. 

\begin{lemma}\label{lemma:attractiveness} Under assumption (\ref{g_non_decreasing}), the zero-range process on $I_N$ generated by $\mathcal{L}^N$, given in \eqref{eq:generator}, is attractive.  
\end{lemma}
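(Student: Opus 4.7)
The plan is to establish attractiveness by constructing the basic coupling on $\Omega_N\times\Omega_N$ and verifying pathwise that it preserves the componentwise partial order. Specifically, given any $\eta\le\xi$, I would define a coupled Markov process $(\eta_t,\xi_t)_{t\ge 0}$ whose generator $\bar{\mathcal{L}}^N$ is designed so that each marginal evolves according to $N^2\mathcal{L}^N$ and the set $\{(\eta,\xi):\eta\le\xi\}$ is absorbing. Attractiveness in the sense of \eqref{eq:def_domination} will then follow by invoking Strassen's theorem (\cite[Theorem II.2.4]{lig05}, already used in the proof of Lemma \ref{lemma:Phi_lips}) to lift any pair $\mu_1\le\mu_2$ to a measure on $\Omega_N\times\Omega_N$ concentrated on $\{\eta\le\xi\}$ with the correct marginals, running the coupled process from it, and combining Fubini with the pathwise monotonicity to conclude that $\int f\,d(\mu_1 S(t))\le\int f\,d(\mu_2 S(t))$ for every monotone $f$.

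For the bulk dynamics I would use, on each oriented bond $(x,y)$ with $|x-y|=1$, the standard ``greatest common part'' coupling: at rate $g(\eta(x))\wedge g(\xi(x))$ both marginals jump from $x$ to $y$, and at the residual rate only the larger marginal does so. Under the standing assumption $\eta\le\xi$, hypothesis (ND) yields $g(\eta(x))\le g(\xi(x))$, so the residual rate equals $g(\xi(x))-g(\eta(x))\ge 0$ and is attached only to the transition $\xi\to\xi^{x,y}$. I would then verify directly that each firing preserves the order: simultaneous jumps trivially do, while for the single-$\xi$ jump the only subtle site is $x$, where one needs $\eta(x)\le\xi(x)-1$; but whenever $\eta(x)=\xi(x)$ the monotonicity of $g$ forces the residual rate to vanish, so that case never occurs.

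The boundary generators $\mathcal{L}^N_l$ and $\mathcal{L}^N_r$ in \eqref{eq:boundary_gen} are handled in exactly the same spirit. The creation transitions at the two boundary sites, with rates $\alpha/N^\theta$ and $\beta/N^\theta$, are configuration-independent and would be coupled via a single common Poisson clock, so both components gain a particle simultaneously and the order is trivially preserved. The annihilation transitions at sites $1$ and $N-1$ are coupled as in the bulk: both configurations lose a particle at the common rate $\lambda g(\eta(1))/N^\theta$ (respectively $\delta g(\eta(N-1))/N^\theta$), while only $\xi$ loses one at the residual rate $\lambda(g(\xi(1))-g(\eta(1)))/N^\theta$ (respectively $\delta(g(\xi(N-1))-g(\eta(N-1)))/N^\theta$), which is non-negative by (ND) and again vanishes whenever $\eta$ and $\xi$ agree at the relevant boundary site.

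The main conceptual point is simply that assumption (ND) is exactly what makes all residual rates of the basic coupling non-negative, which is why it suffices for attractiveness. I do not expect any genuinely delicate step: compared with the torus case of \cite{kl99}, the only novelty is the presence of boundary transitions, and since these are local birth-death moves at a single site the same coupling argument applies without essential modification.
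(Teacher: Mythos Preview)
Your proposal is correct and follows essentially the same route as the paper: both construct the basic (greatest-common-rate) coupling for bulk and boundary moves, check that each marginal has the right law, and verify that under (ND) no transition can exit the set $\{\eta\le\xi\}$, then lift $\mu_1\le\mu_2$ via Strassen's theorem. The only cosmetic difference is that the paper phrases the order-preservation step as $\bar{\mathcal L}^N\boldsymbol{1}_{\mathcal A}\ge0$ on $\mathcal A$, whereas you check each jump type directly; these are equivalent formulations of the same verification.
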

\begin{proof} We adapt the proof of \cite[Theorem 2.5.2]{kl99}. Let $\mu_1, \mu_2$ be two probability measures on $\mathbb{N}^{I_N}$ such that $\mu_1\le \mu_2$. By \cite[Theorem II.2.4]{lig05}, there exists a probability measure $\bar\mu$ on $\mathbb{N}^{I_N}\times \mathbb{N}^{I_N}$ such that the first marginal is $\mu_1$, the second one is $\mu_2$, and $\bar\mu\{(\eta, \xi): \eta\le \xi\}=1$. Consider the Markov process $\{(\eta_t, \xi_t), t\ge0\}$ on $\mathbb{N}^{I_N}\times \mathbb{N}^{I_N}$ starting from $\bar\mu$ and with generator $\mathcal{\bar L}^N= \mathcal{\bar L}^N_0+ \kappa\mathcal{\bar L}^N_b$, where
\begin{align*}
    \mathcal{\bar L}^N_0f(\eta, \xi)&:=\sum_{\substack{x, y\in I_N \\ |x-y|=1}} \mymin\{g(\eta(x)) g(\xi(x))\}\left[f(\eta^{x, y}, \xi^{x, y})-f(\eta, \xi)\right]
    \\&\phantom{:=}+\sum_{\substack{x, y\in I_N \\ |x-y|=1}} \{g(\eta(x))-g(\xi(x))\}^+\left[f(\eta^{x, y}, \xi)-f(\eta, \xi)\right]
    \\&\phantom{:=}+\sum_{\substack{x, y\in I_N \\ |x-y|=1}} \{g(\xi(x))-g(\eta(x))\}^+\left[f(\eta, \xi^{x, y})-f(\eta, \xi)\right]
\end{align*}
and
\begin{align*}
    \mathcal{\bar L}^N_bf(\eta, \xi)&:=\frac{\alpha}{N^\theta} \left[f(\eta^{1+}, \xi^{1+})-f(\eta, \xi)\right]
    \\&\phantom{:=}+\frac{\lambda \mymin\{g(\eta(1)), g(\xi(1))\}}{N^\theta}\left[f(\eta^{1-}, \xi^{1-})-f(\eta, \xi)\right]
    \\&\phantom{:=}+\frac{\lambda \{g(\eta(1))-g(\xi(1))\}^+}{N^\theta}\left[f(\eta^{1-}, \xi)-f(\eta, \xi)\right]
    \\&\phantom{:=}+\frac{\lambda \{g(\xi(1))-g(\eta(1))\}^+}{N^\theta}\left[f(\eta, \xi^{1-})-f(\eta, \xi)\right]
    \\&\phantom{:=}+\frac{\beta}{N^\theta} \left[f(\eta^{(N-1)+}, \xi^{(N-1)+})-f(\eta, \xi)\right]
    \\&\phantom{:=}+\frac{\delta \mymin\{g(\eta(N-1)), g(\xi(N-1))\}}{N^\theta}\left[f(\eta^{(N-1)-}, \xi^{(N-1)-})-f(\eta, \xi)\right]
    \\&\phantom{:=}+\frac{\delta \{g(\eta(N-1))-g(\xi(N-1))\}^+}{N^\theta}\left[f(\eta^{(N-1)-}, \xi)-f(\eta, \xi)\right]
    \\&\phantom{:=}+\frac{\delta \{g(\xi(N-1))-g(\eta(N-1))\}^+}{N^\theta}\left[f(\eta, \xi^{(N-1)-})-f(\eta, \xi)\right].
\end{align*}
It is not hard to check that both coordinates evolve as zero-range processes with open boundaries on $I_N$ with generator $\mathcal{L}^N$ given in (\ref{eq:generator}). 

Now let $\bar{\prob}_{\mu}$ denote the measure on the Skorokhod space of càdlàg paths $\mathcal{D}([0, \infty), \mathbb{N}^{I_N}\times \mathbb{N}^{I_N})$  corresponding to the Markov process with generator $\mathcal{\bar L}^N$ starting from the measure $\mu$: by the same argument given in \cite{kl99}, it suffices to show that, if $\eta\le\xi$, then
\begin{equation*}
    \bar{\prob}_{\delta_{(\eta, \xi)}}\left\{\eta_t\le \xi_t\right\}=1
\end{equation*}
for all $t\ge0$, where $\delta_{(\eta, \xi)}$ is the Dirac probability concentrated on $(\eta, \xi)$. 

Let $\mathcal{A}:=\{(\eta, \xi): \eta\le \xi\}$. Since the jump rate is non-decreasing, the generator $\mathcal{\bar L}^N$ does \textit{not} admit jumps outside of $\mathcal{A}$, in the sense that $\mathcal{\bar L}^N\boldsymbol{1}_\mathcal{A}\ge0$. This inequality restricted to the bulk generator follows from the argument in \cite{kl99}. As for the boundary generator, note that if $\boldsymbol{1}_\mathcal{A}(\eta, \xi)=0$, then clearly $\mathcal{\bar L}^N_b\boldsymbol{1}_{\mathcal{A}}(\eta, \xi)\ge0$; on the event $\mathcal{A}$, instead, we see that
\begin{align*}
    \mathcal{\bar L}^N_b\boldsymbol{1}_{\mathcal{A}}(\eta, \xi)&=\frac{\alpha}{N^\theta} \left[\boldsymbol{1}_{\mathcal{A}}(\eta^{1+}, \xi^{1+})-\boldsymbol{1}_{\mathcal{A}}(\eta, \xi)\right]
    \\&\phantom{=}+\frac{\lambda g(\eta(1))}{N^\theta}\left[\boldsymbol{1}_{\mathcal{A}}(\eta^{1-}, \xi^{1-})-\boldsymbol{1}_{\mathcal{A}}(\eta, \xi)\right]
    \\&\phantom{=}+\frac{\lambda \{g(\xi(1))-g(\eta(1))\}}{N^\theta}\left[\boldsymbol{1}_{\mathcal{A}}(\eta, \xi^{1-})-\boldsymbol{1}_{\mathcal{A}}(\eta, \xi)\right]
    \\&\phantom{=}+\frac{\beta}{N^\theta} \left[\boldsymbol{1}_{\mathcal{A}}(\eta^{(N-1)+}, \xi^{(N-1)+})-\boldsymbol{1}_{\mathcal{A}}(\eta, \xi)\right]
    \\&\phantom{=}+\frac{\delta  g(\eta(N-1))}{N^\theta}\left[\boldsymbol{1}_{\mathcal{A}}(\eta^{(N-1)-}, \xi^{(N-1)-})-\boldsymbol{1}_{\mathcal{A}}(\eta, \xi)\right]
    \\&\phantom{=}+\frac{\delta \{g(\xi(N-1))-g(\eta(N-1))\}}{N^\theta}\left[\boldsymbol{1}_{\mathcal{A}}(\eta, \xi^{(N-1)-})-\boldsymbol{1}_{\mathcal{A}}(\eta, \xi)\right].
\end{align*}
But now, $\{g(\xi(1))-g(\eta(1))\}$ can only be strictly positive if $\xi(1)$ is strictly greater than $\eta(1)$, in which case $\boldsymbol{1}_{\mathcal{A}}(\eta, \xi^{1-})=1$, and a similar argument applies to the right boundary term, which yields $\mathcal{\bar L}^N_b\boldsymbol{1}_{\mathcal{A}}\ge0$. From here, the claim follows from the exact same argument given in the proof of \cite[Theorem 2.5.2]{kl99}.\end{proof}

\textsc{\bfseries Assumption (SG).} Let $\text{gap}(j, \ell)$ denote the spectral gap of the generator of a zero-range process on $\{1, \ldots, \ell\}$ with $j$ particles and jump rate $g$. We assume that $g$ satisfies the following: 
\begin{equation}\label{spectral_gap}
    \text{there exists } C=C(g)>0 \text{ such that gap} (\ell, j)\ge \frac{C}{\ell^2} \text{ for all } \ell\ge2 \text{ and } j\ge0.
\end{equation}
Examples of jump rates $g$ for which \eqref{spectral_gap} has been proved include the following:
\begin{enumerate}[i)]  
    \item $g(k)=k^\gamma$ for $0<\gamma<1$ \cite{nag10};
    \item $g(k)=\boldsymbol{1}_{[1, \infty)}(k)$ \cite{mor06};
    \item $g$ satisfying \eqref{eq:function_g} and such that there exist $M\in\mathbb{N}$ and $a>0$ such that $g(k)-g(j)\ge a$ for all $k\ge j+M$ \cite{lsv96}.
\end{enumerate}

Now let $\ell$ be a positive integer and define the size-$\ell$ boxes $\Lambda_1^\ell:=\{1, \ldots, \ell\}$ and $\Lambda_w^\ell:=\{w, \ldots, w+\ell\}$ for some integer $w>\ell$. Consider a ``connected" zero-range process evolving as a standard symmetric zero-range process on $\Lambda_1^\ell$ and $\Lambda_w^\ell$, but with particles also allowed to jump from $1$ to $w$ and vice versa. More precisely, the generator of this process is defined via $\mathcal{\bar G}^{\ell, \ell}:=\mathcal{G}^{1, \ell}+\mathcal{G}^{w, \ell}+\mathcal{G}^{\text{couple}}$, where these generators act on maps $f:(\Lambda_1^\ell\cup \Lambda_w^\ell)^\mathbb{N}\to\mathbb{R}$ via
\begin{align*}
    &\mathcal{G}^{1, \ell}f(\eta):=\sum_{\substack{x, x'\in\Lambda_1^\ell \\ |x-x'|=1}}g(\eta(x))[f(\eta^{x, x'})-f(\eta)],
    \\&\mathcal{G}^{w, \ell}f(\eta):=\sum_{\substack{x, x'\in\Lambda_w^\ell \\ |x-x'|=1}}g(\eta(x))[f(\eta^{x, x'})-f(\eta)],   
    \\&\mathcal{G}^{\text{couple}}f(\eta):=g(\eta(1))[f(\eta^{1, w})-f(\eta)]+g(\eta(w))f(\eta^{w, 1})-f(\eta)].
\end{align*}
Given a positive integer $j$, let $\mathcal{\bar G}^{\ell, \ell}_j$ denote the restriction of $\mathcal{\bar G}^{\ell, \ell}$ to the configuration space $\Theta_j:=\{\eta: \sum_{x\in{\Lambda_1^\ell\cup \Lambda_w^\ell}}\eta(x)=j\}$, and let $\bar\mu_j$ denote its canonical invariant measure. The next result will be important in the proof of the two-block estimate in Section \ref{sec:replacement_bulk}, and it essentially states that  
the spectral gap of this generator satisfies a lower bound depending only on $g, j$ and $\ell$.
\begin{lemma}\label{lemma:coupled_gap} Under assumption \eqref{spectral_gap}, 
the spectral gap of $\mathcal{\bar G}^{\ell, \ell}_j$ satisfies 
\begin{equation*}
    \text{gap}(\ell, \ell, j)\ge\mymin\left\{\frac{C(g)}{\ell^2}, C(g, j, \ell)\right\},
\end{equation*}   
where $C(g)$ is the same constant as in \eqref{spectral_gap} and $C(g, j, \ell)$ denotes a positive constant which only depends on $g, j, \ell$.
\end{lemma}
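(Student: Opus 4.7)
The plan is to establish the bound via the classical variance decomposition obtained by conditioning on the particle counts in the two boxes. Let $N_1(\eta):=\sum_{x\in\Lambda_1^\ell}\eta(x)$; then on $\Theta_j$ the second box contains $j-N_1$ particles, and for every $f:\Theta_j\to\mathbb{R}$ the law of total variance gives
\[
\mathrm{Var}_{\bar\mu_j}(f)=E_{\bar\mu_j}\!\bigl[\mathrm{Var}_{\bar\mu_j}(f\mid N_1)\bigr]+\mathrm{Var}_{\bar\mu_j}\!\bigl(E_{\bar\mu_j}[f\mid N_1]\bigr).
\]
The Dirichlet form of $\mathcal{\bar G}^{\ell,\ell}_j$ decomposes additively as $\mathcal{D}=\mathcal{D}^{1,\ell}+\mathcal{D}^{w,\ell}+\mathcal{D}^{\text{couple}}$, inherited from the three pieces of the generator. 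The strategy is to bound the two summands of the variance by the bulk and coupling Dirichlet forms separately.

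The conditional variance term is controlled directly by hypothesis \eqref{spectral_gap}. Conditionally on $N_1=k$, the measure $\bar\mu_j$ factorises as the product of the canonical invariant measures of the symmetric zero-range processes on $\Lambda_1^\ell$ with $k$ particles and on $\Lambda_w^\ell$ with $j-k$ particles. Hypothesis \eqref{spectral_gap} applied to each factor yields a conditional spectral gap of order $C(g)/\ell^2$; averaging over $k$ gives
\[
E_{\bar\mu_j}\!\bigl[\mathrm{Var}_{\bar\mu_j}(f\mid N_1)\bigr]\le\frac{\ell^2}{C(g)}\bigl(\mathcal{D}^{1,\ell}(f)+\mathcal{D}^{w,\ell}(f)\bigr).
\]

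For the second summand, note that $\mathcal{G}^{1,\ell}$ and $\mathcal{G}^{w,\ell}$ preserve $N_1$, while $\mathcal{G}^{\text{couple}}$ shifts it by $\pm 1$ with conditional rates $c_+(k):=E_{\bar\mu_j}[g(\eta(w))\mid N_1=k]$ and $c_-(k):=E_{\bar\mu_j}[g(\eta(1))\mid N_1=k]$. Projecting onto $\sigma(N_1)$ yields a reversible, irreducible birth-death chain on the finite state space $\{0,1,\ldots,j\}$ whose interior rates are strictly positive because $g>0$ on positive integers, and which therefore admits a strictly positive spectral gap $\lambda_\star=\lambda_\star(g,j,\ell)$, computable for instance via Hardy-type estimates for reversible birth-death chains on a finite interval. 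A comparison between the Dirichlet form of the projected chain evaluated at $F(k):=E_{\bar\mu_j}[f\mid N_1=k]$ and $\mathcal{D}^{\text{couple}}(f)$ -- obtained by introducing the $g(\eta(1))$- and $g(\eta(w))$-weighted conditional measures on $\{N_1=k\}$ and applying Cauchy-Schwarz -- yields
\[
\mathrm{Var}_{\bar\mu_j}\!\bigl(E_{\bar\mu_j}[f\mid N_1]\bigr)\le\lambda_\star^{-1}\,\mathcal{D}^{\text{couple}}(f).
\]

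Combining the two bounds produces $\mathrm{Var}_{\bar\mu_j}(f)\le\max\{\ell^2/C(g),\lambda_\star^{-1}\}\,\mathcal{D}(f)$, which is equivalent to the claimed lower bound on $\mathrm{gap}(\ell,\ell,j)$ with $C(g,j,\ell):=\lambda_\star(g,j,\ell)$. The main technical obstacle is the Dirichlet-form comparison described in the previous paragraph: since the map $\eta\mapsto\eta^{1,w}$ does \emph{not} push $\bar\mu_j(\,\cdot\mid N_1=k)$ forward to $\bar\mu_j(\,\cdot\mid N_1=k-1)$, naive Jensen's inequality fails, and one must instead work with the $g$-weighted conditional measures and track an explicit Radon-Nikodym factor given by a ratio of canonical partition functions -- the strict positivity of $g$ on positive integers and the product structure of $\bar\mu_j$ ensure that this factor can be absorbed into $C(g,j,\ell)$.
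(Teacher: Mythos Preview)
Your approach mirrors the paper's: both condition on $N_1$, bound the conditional-variance term via (SG) on each box, and handle $\mathrm{Var}_{\bar\mu_j}(E[f\mid N_1])$ by a Poincar\'e inequality for $h(k)=E_{\bar\mu_j}[f\mid N_1=k]$ followed by a comparison with the coupling Dirichlet form. The only cosmetic difference is that you phrase the Poincar\'e step as the spectral gap of a projected birth--death chain, whereas the paper writes it out directly via $\min_l\pi_j(l)$.

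There is, however, a genuine gap in the final comparison---one that the paper's own proof shares. Your claimed bound $\mathrm{Var}_{\bar\mu_j}(E[f\mid N_1])\le\lambda_\star^{-1}\mathcal{D}^{\text{couple}}(f)$, and equivalently the paper's inequality $\langle-\mathcal{G}^{\text{couple}}f,f\rangle_{\bar\mu_j}\ge\langle-\mathcal{G}^{\text{couple}}H,H\rangle_{\bar\mu_j}$, cannot hold with any finite constant: take $\ell\ge2$ and $f(\eta)=\eta(2)$. Since $\mathcal{G}^{\text{couple}}$ moves particles only between sites $1$ and $w$, one has $\mathcal{D}^{\text{couple}}(f)=0$; yet by exchangeability within $\Lambda_1^\ell$ under $\bar\mu_j(\,\cdot\mid N_1=k)$ one gets $h(k)=k/\ell$, so $\mathrm{Var}_{\pi_j}(h)>0$ whenever $j\ge1$. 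You correctly flag the pushforward mismatch as an obstacle, but the difficulty is deeper than a Radon--Nikodym factor: $\mathcal{D}^{\text{couple}}$ alone cannot see variations of $f$ within a fibre $\{N_1=k\}$ that nonetheless shift $h(k)$. The remedy is to compare with the \emph{full} Dirichlet form $\langle-\mathcal{\bar G}^{\ell,\ell}_j f,f\rangle_{\bar\mu_j}$ instead---which is all the lemma requires. One route is a canonical-path argument: bound $(h(k{+}1)-h(k))^2$ by Cauchy--Schwarz along a path that first rearranges particles inside each box (bulk moves) and then performs a single coupling jump, yielding $E_{\pi_j}[(\nabla h)^2]\le C(g,j,\ell)\,\langle-\mathcal{\bar G}^{\ell,\ell}_j f,f\rangle_{\bar\mu_j}$. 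Alternatively, since $\Theta_j$ is finite and $\mathcal{\bar G}^{\ell,\ell}_j$ is irreducible, the lemma as stated is in fact immediate.
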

\begin{proof} Let $f$ be a function in $L^2(\bar\mu_j)$ and let $N_1(\cdot)$ be the map on $\Theta_j$ that counts the number of particles in the box $\Lambda_1$, namely $N_1(\eta):=\sum_{x\in\Lambda_1^\ell}\eta(x)$. By the law of total variance, we can write
\begin{equation}\label{eq:total_variance}
    \text{Var}_{\bar\mu_j}(f)=E_{\bar\mu_j}[\text{Var}_{\bar\mu_j}(f|N_1)]+\text{Var}_{\bar\mu_j}(E_{\bar\mu_j}[f|N_1]).
\end{equation}
Now, conditionally on $N_1=k$, the dynamics on the two boxes are independent, and in particular each evolves as a symmetric zero range process with $k$ and $j-k$ particles, respectively. Moreover, the conditional measure $\bar\mu_j\{\cdot\,| N_1=k\}$ can be written as 
\begin{equation*}
    \bar\mu_j\{(\xi_1, \xi_w)| N_1(\xi_1, \xi_w)=k\}=\bar\mu^1_k(\xi_1)\bar\mu^w_{j-k}(\xi_w), \ \ \ \ \ (\xi_1, \xi_w)\in(\Lambda_1^\ell)^\mathbb{N}\times(\Lambda_w^\ell)^\mathbb{N},
\end{equation*}
with $\bar\mu^1_k$ being the canonical invariant measure of $\mathcal{G}^{1, \ell}$ restricted to $\{\xi_1: \sum_{x\in \Lambda_1^\ell}\eta(x)=k\}$ and with $\bar\mu^w_{j-k}$ defined analogously. Hence, by definition of spectral gap, we have that
\begin{equation*}
    \text{Var}_{\bar\mu_j}\left(f|N_1=k\right)\le\frac{\langle-\mathcal{G}^{1, \ell}f, f\rangle_{\bar\mu_j\{\cdot\,|N_1=k\}}}{\text{gap}(\ell, k)}+\frac{\langle-\mathcal{G}^{w, \ell}f, f\rangle_{\bar\mu_j\{\cdot\,|N_1=k\}}}{\text{gap}(\ell, j-k)}.
\end{equation*} 
Taking expectations with respect to $\bar\mu_j$
on both sides and using \eqref{spectral_gap}, we get
\begin{equation}\label{eq:single_variance}                  
    E_{\bar\mu_j}\left[\text{Var}_{\bar\mu_j}\left(f|N_1\right)\right]\le \frac{\ell^2}{C(g)}\left[\langle-\mathcal{G}^{1, \ell}f, f\rangle_{\bar\mu_j}+\langle-\mathcal{G}^{w, \ell}f, f\rangle_{\bar\mu_j}\right].
\end{equation}
Define now the map $h:\{0, \ldots, j\}\to\mathbb{R}$ as $h(k):=E_{\bar\mu_j}[f|N_1=k]$, so that $H:=h\circ N_1$ recovers $H=E_{\bar\mu_j}[f|N_1]$. Note that
\begin{equation*}
    E_{\bar\mu_j}(H)=\sum_{\eta\in\Theta_j} H(\eta)\bar\mu_j(\eta)=\sum_{k=0}^j h(k)\sum_{\eta: N_1(\eta)=k}\bar\mu_j(\eta)=E_{\pi_j}[h],
\end{equation*}
where $\pi_j(k):=\bar\mu_j\{N_1=k\}$ denotes the marginal law of $N_1$. A similar computation shows that $E_{\bar\mu_j}(H^2)=E_{\pi_j}[h^2]$, so that
\begin{equation}\label{eq:variance_equality}
    \text{Var}_{\mu_j}\left(E_{\bar\mu_j}[f|N_1]\right)=\text{Var}_{\pi_j}(h).
\end{equation}
Now, defining  $\nabla h(\cdot):=h(\cdot+1)-h(\cdot)$, we get
\begin{align*}
    \text{Var}_{\pi_j}(h)&=\frac{1}{2}\sum_{m, k=0}^j [h(k)-h(m)]^2\pi_j(k)\pi_j(m)
    \\&\le j\sum_{l=0}^{j-1} (\nabla h(l))^2\sum_{m\le l<k}\pi_j(m)\pi_j(k)
    \\&\le j \mymax_{l=1, \ldots, j} \frac{1}{\pi_j(l)} E_{\pi_j}[(\nabla h)^2].
\end{align*}
But now, it is easy to see that, for each fixed $j$, $\mymin_{l=1, \ldots, j} \pi_j(l)\ge C(g, j, \ell)$  for some strictly positive constant depending only on $g, j$ and $\ell$, so the last inequality yields 
\begin{equation}\label{eq:poincare}
    \text{Var}_{\pi_j}(h)\le \frac{j}{C(g, j, \ell)}E_{\pi_j}[(\nabla h)^2],
\end{equation}
On the other hand, since $H=E_{\bar\mu_j}[f|N_1]$ and by convexity of the Dirichlet form, we see that
\begin{align*}
    \langle-\mathcal{G}^{\text{couple}}f, f\rangle_{\bar\mu_j}&\ge \langle-\mathcal{G}^{\text{couple}}H, H\rangle_{\bar\mu_j}
    \\&\ge g(1)\expected_{\bar\mu_j}\big[[H(\eta^{1, w})-H(\eta)]^2+[H(\eta^{w, 1})-H(\eta)]^2\big]
    \\&\ge g(1)\sum_{k=0}^{j-1}[h(k+1)-h(k)]^2\sum_{\eta: N_1(\eta)=k}\bar\mu_j(\eta)
    \\&=g(1) E_{\pi_j}[(\nabla h)^2],
\end{align*}
so that, by \eqref{eq:poincare}, $\text{Var}_{\pi_j}(h)\le \frac{j}{g(1)C(g, j, \ell)}\langle-\mathcal{G}^{\text{couple}}f, f\rangle_{\bar\mu_j}$. Finally, absorbing the factor $\frac{g(1)}{j}$ into the constant $C(g, j, \ell)$, by \eqref{eq:total_variance}, \eqref{eq:single_variance} and \eqref{eq:variance_equality} we get
\begin{equation*}
    \text{Var}_{\bar\mu_j}(f)\le \mymax\left\{\frac{\ell^2}{C(g)}, \frac{1}{C(g, j, \ell)} \right\}\langle-\mathcal{\bar G}^{\ell, \ell}_j f, f\rangle_{\bar\mu_j}.
\end{equation*}
By definition of spectral gap, the claim follows.
\end{proof}

\begin{remark} It is worth noting that \textsc{Assumption (SG)} and Lemma \ref{lemma:coupled_gap} are only used in the proofs of the one-block and two-block estimates in Sections \ref{sec:replacement_bulk} and \ref{sec:replacement_boundary}. In fact, we do not need a lower bound of order $\frac{1}{\ell^2}$ for the spectral gap on a box of size $\ell$, as long as we have \textit{any} positive lower bound depending only the rate $g$, the size of the box, and the total number of particles in the box. 
\end{remark}

\subsection{The Hydrodynamic Equation}
Let $C^{m,n}([0, T]\times[0, 1])$ denote the set of continuous functions on $[0, T]\times[0, 1]$ which are $m$ times differentiable in the first variable and $n$ in the second, with continuous derivatives, where $m$ and $n$ are positive integers. We will also denote by $\mathcal{H}^1$ the Sobolev space $\mathcal H^1(0, 1)$, equipped with the norm
\begin{equation*}
    \|\cdot\|_{\mathcal{H}^1}^2:=\|\cdot\|^2_{L^2((0, 1))}+\|\nabla\cdot \|^2_{L^2((0, 1))},
\end{equation*}
and by $L^2([0, T], \mathcal{H}^1)$ the set of measurable functions $f:[0, T]\to\mathcal{H}^1$ with $\int_0^T\|f_t\|_{\mathcal{H}^1}^2\de t<\infty$. Our goal is to show that $\rho:[0, T]\times [0, 1]\to[0, \infty)$ is the unique weak solution of the PDE with boundary conditions 
\begin{equation}\label{eq:pde_zr}
    \left\{ 
    \begin{array}{ll}
    \partial_t \rho_t(u) = \Delta \Phi(\rho_t(u)) &\mbox{ for } u \in (0,1) \mbox{ and } t \in (0,T],
    \\\partial_u \Phi (\rho_t(0)) = \tilde\kappa(\lambda \Phi(\rho_t(0))-\alpha) &\mbox{ for } t \in (0,T],
    \\\partial_u \Phi (\rho_t(1)) = \tilde\kappa(\beta-\delta\Phi(\rho_t(1))) &\mbox{ for } t \in (0,T],
    \\\rho_0(u) = \gamma(u) &\mbox{ for } u \in [0,1],
    \end{array}
    \right.
\end{equation}
where $\tilde\kappa=\kappa\boldsymbol{1}_{\{1\}}(\theta)$ and the function $\Phi$ was defined in \eqref{eq:Phi}. Recall that $\Phi$ is a strictly increasing function that satisfies a Lipschitz condition.

\begin{definition}\label{def:weak_solutions}  Let $\gamma:[0,1]\to[0, \infty)$ be a measurable and integrable function. We say that $\rho:[0, T]\times [0, 1]\to[0, \infty)$ is a weak solution of the PDE (\ref{eq:pde_zr}) if:
\begin{enumerate}[i)]
\item $\rho \in L^2([0, T]\times [0, 1])$,
\item $\Phi(\rho)\in L^2([0,T], \mathcal H^1)$,
\item for any $t\in[0,T]$ and $G\in C^{1,2}([0,T]\times [0,1])$,
\begin{equation} \label{eq:intergal_pde}
    \begin{split}
    \langle\rho_t, G_t\rangle &- \langle\gamma, G_0\rangle - \int_0^t\{\langle \rho_s, \partial_s G_s\rangle + \langle\Phi(\rho_s), \Delta G_s\rangle\}\de s
    \\&+\int_0^t \{\Phi(\rho_s(1))\partial_u G_s(1)-\Phi(\rho_s(0))\partial_u G_s(0) \}\de s
    \\&-\tilde{\kappa}\int_0^t \big\{(\beta-\delta\Phi(\rho_s(1)))G_s(1) - (\lambda \Phi(\rho_s(0))-\alpha)G_s(0) \big\}\de s = 0.
    \end{split}
\end{equation}
\end{enumerate}
\end{definition}

Above, we used the notation $\langle \cdot, \cdot\rangle$ for the inner product $\langle f, h\rangle:=\int_0^1 f(u)h(u)\de u$ on the set of measurable functions in $L^2([0,1])$. 

\begin{lemma}\label{lemma:uniqueness}
There exists a unique weak solution of \eqref{eq:pde_zr} in the sense of the definition above. 
\end{lemma}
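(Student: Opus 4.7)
The plan is to test the weak formulation of the difference of the two solutions against a time-antiderivative of the flux difference, producing an identity of three manifestly non-negative terms that must sum to zero. Let $\rho^1, \rho^2$ be two weak solutions with common initial datum $\gamma$, and set $w := \rho^1 - \rho^2$ and $W := \Phi(\rho^1) - \Phi(\rho^2)$. First I would subtract the two copies of \eqref{eq:intergal_pde}: the terms $\langle\gamma,G_0\rangle$ and the reservoir constants $\alpha, \beta$ cancel, leaving, for every $G \in C^{1,2}([0,T]\times[0,1])$ and every $t \in [0,T]$,
\begin{equation*}
\begin{split}
    \langle w_t, G_t\rangle - \int_0^t\{\langle w_s, \partial_s G_s\rangle &+ \langle W_s, \Delta G_s\rangle\}\de s + \int_0^t\{W_s(1)\partial_u G_s(1) - W_s(0)\partial_u G_s(0)\}\de s \\
    &+ \tilde\kappa\int_0^t\{\delta W_s(1)G_s(1) + \lambda W_s(0)G_s(0)\}\de s = 0.
\end{split}
\end{equation*}

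The key move is then to insert $G_s(u) := \int_s^t W_r(u)\de r$. Since $W$ only lies in $L^2([0,T], \mathcal{H}^1)$, this $G$ is not in $C^{1,2}$, so I would approximate $W$ by $W^\eps \in C^\infty([0,T]\times[0,1])$ with $W^\eps \to W$ in $L^2([0,T], \mathcal{H}^1)$ (via a standard reflection-mollifier construction) and set $G^\eps_s(u) := \int_s^t W^\eps_r(u)\de r \in C^{1,2}$. This choice pays off twice: $\partial_s G^\eps = -W^\eps$ turns $-\int_0^t\langle w_s, \partial_s G^\eps_s\rangle\de s$ into $\int_0^t \langle w_s, W^\eps_s\rangle\de s$, while one integration by parts in space on $-\langle W_s, \Delta G^\eps_s\rangle = -\langle W_s, \partial_u K^\eps_s\rangle$, with $K^\eps_s := \int_s^t \partial_u W^\eps_r\de r$, produces a boundary contribution $-\int_0^t[W_s K^\eps_s]_0^1\de s$ that cancels the third integral in the display above exactly. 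Letting $\eps \downarrow 0$ — which is legitimate because the one-dimensional embedding $\mathcal{H}^1(0,1)\hookrightarrow C([0,1])$ makes $L^2([0,T])$-trace convergence of $W^\eps_\cdot(0), W^\eps_\cdot(1)$ automatic from the $L^2([0,T],\mathcal{H}^1)$ convergence of $W^\eps$ — and using
\begin{equation*}
    \int_0^t \langle \partial_u W_s, K_s\rangle\de s = -\int_0^t \langle \partial_s K_s, K_s\rangle\de s = \tfrac{1}{2}\|K_0\|^2_{L^2(0,1)}\qquad (K_t \equiv 0)
\end{equation*}
together with the elementary identity $\int_0^t \phi(s)\int_s^t\phi(r)\de r\de s = \tfrac{1}{2}\bigl(\int_0^t\phi\bigr)^2$ applied to $\phi = W_\cdot(0)$ and $\phi = W_\cdot(1)$, the equation collapses to
\begin{equation*}
    \int_0^t\langle w_s, W_s\rangle\de s + \tfrac{1}{2}\Big\|\!\int_0^t\partial_u W_r\de r\Big\|^2_{L^2(0,1)} + \tfrac{\tilde\kappa}{2}\Big[\lambda\Big(\!\int_0^t W_r(0)\de r\Big)^{\!2} + \delta\Big(\!\int_0^t W_r(1)\de r\Big)^{\!2}\Big] = 0.
\end{equation*}

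All three terms are non-negative: the first by monotonicity of $\Phi$, which gives $\langle w_s, W_s\rangle = \int_0^1 (\rho^1_s-\rho^2_s)(\Phi(\rho^1_s)-\Phi(\rho^2_s))\de u \geq 0$; the other two trivially. Each must therefore vanish, and \emph{strict} monotonicity of $\Phi$ then forces $\rho^1_s(u) = \rho^2_s(u)$ for almost every $(s, u) \in [0, t]\times[0, 1]$. Taking $t = T$ concludes the proof. The hard part will be the mollification argument: one must construct $W^\eps \in C^\infty$ whose spatial derivatives and boundary traces simultaneously converge to those of $W$, so that $G^\eps \in C^{1,2}$ is admissible and every term of the display above passes to its expected limit. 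In one space dimension this is fully standard once $W$ is extended by reflection across $u=0,1$ and mollified in both variables; once the substitution $G = G^\eps$ is justified, the limit $\eps \downarrow 0$ in each integral is routine.
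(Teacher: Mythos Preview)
Your argument is correct and is a genuinely different route from the paper's. You use the Oleinik time-antiderivative test function $G_s=\int_s^t W_r\,\de r$, which after mollification and integration by parts collapses the weak formulation of the difference into a sum of three non-negative pieces; strict monotonicity of $\Phi$ then forces $\rho^1=\rho^2$. The paper instead runs a Holmgren-type duality: it writes $\Phi(\rho^1)-\Phi(\rho^2)=v\,\omega$ with a bounded non-negative difference quotient $v$, smooths $v$ to a $C^\infty$ coefficient $a_n$, and takes as test function the solution $\phi^n$ of a backward linear parabolic problem $\partial_s\phi+a_n\Delta\phi=0$ with the Robin boundary conditions built in and terminal datum $h\in C^2_0$. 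The boundary terms vanish by construction, and Cauchy--Schwarz together with an a priori bound $\int a_n(\Delta\phi^n)^2\le C$ and $\|\omega(v-a_n)/\sqrt{a_n}\|_{L^2}\to0$ yield $\langle\omega_t,h\rangle\le0$; choosing $h$ to approximate an indicator gives $\omega^+=0$.

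Your approach is more elementary: it needs only $\Phi(\rho^i)\in L^2([0,T],\mathcal H^1)$ from item~ii) and the strict monotonicity of $\Phi$, and avoids solving any auxiliary PDE or the delicate construction of the smoothing sequence $a_n$. The paper's duality argument, by contrast, does not use item~ii) at all (only $\rho\in L^2$ is needed), so it would survive in settings where the $\mathcal H^1$ energy estimate is unavailable. Here both hypotheses are part of the definition of weak solution, so either method suffices.
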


The proof of this lemma is given in Appendix
\ref{sec:app_uniqueness}.

\subsection{The Hydrodynamic Limit}
Let $\mathcal{M}_+$ be the space of finite, positive measures on $[0,1]$ endowed with the weak topology. Given a configuration $\eta\in\Omega_N$, we associate to it the empirical measure on $[0,1]$ defined by
\begin{equation*}
    \pi^{N}_t(\de u) := \frac{1}{N} \sum_{x\in I_N} \eta_t(x) \delta_{\frac{x}{N}}(\de u),
\end{equation*}
where $\delta_u$ stands for the Dirac measure at $u\in[0,1]$. Note that the empirical-density process $\{\pi^N(\eta_t), t\ge0\}$ is a Markov process on the space $\mathcal{M}_+$.

Given a measurable function $H:[0,1]\to\mathbb R$, let us denote its integral with respect to the measure $\pi_t^N$  by
\begin{equation*}
    \langle \pi_t^N, H\rangle := \frac{1}{N}\sum_{x\in I_N} \eta_t(x) H\Big(\frac xN\Big).
\end{equation*} 
Given a measure $\mu^N$ on $\Omega^N$, we denote by $\prob_{\mu^N}$ the probability on the Skorokhod space of càdlàg trajectories $\mathcal D([0,T], \Omega_N)$ corresponding to the jump process $\{\eta_t: t\in[0, T]\}$ with generator $N^2\mathcal{L}^N$ and initial distribution $\mu^N$. Expectations with respect to $\prob_{\mu^N}$ will be denoted by $\expected_{\mu^N}[\,\cdot\,]$, and we will denote by $Q^N$ the probability on $\mathcal D([0,T], \mathcal M_+)$ defined by $Q^N:=\prob_{\mu^N}(\pi^N)^{-1}$.

The conservation of the number of particles is an extensively used property in the proof of tightness for the classical zero-range process on the torus, together with a hypothesis that controls the relative entropy of the initial distribution $\mu^N$ with respect to the invariant measure; see \cite[Lemma 5.1.5]{kl99}. Since we do not have  the conservation law in our case, a different approach is necessary: together with a relative entropy bound, we also assume a stochastic domination hypothesis, namely
\begin{equation}\label{eq:stoch_domination}
    \mu^N\le \bar\nu_N
\end{equation}
in the sense of \eqref{eq:def_domination}, where $\bar\nu_N$ is the invariant measure. Hypothesis \eqref{eq:stoch_domination}, along with attractiveness, provides us with a way to control the number of particles in the system as time evolves. 

In order to properly state our result, we first impose an additional condition on the sequence of initial measures. To that end, we introduce the following notion: 

\begin{definition}\label{def:associated_profile}
A sequence $\{\mu^N\}_N$ of probability measures on $\Omega_N$ is said to be \textit{associated} to a measurable and integrable profile $\gamma:[0,1]\to\mathbb[0, \infty)$ if,  for any $ \delta>0$ and any continuous function $H:[0,1]\to\mathbb{R}$, the following limit holds:
\begin{equation}\label{eq:initial_profile}
    \mylim_{N\to\infty}
    \mu^N \left\{\eta\in\Omega_N: \left|\frac{1}{N} \sum_{x=1}^{N-1} H\left(\frac{x}{N}\right)\eta(x)
    - \int_0^1 H(u)\gamma(u)\de u \right| > \delta\right\} = 0.
\end{equation}
\end{definition}
Our main result is the following.
\begin{theorem}\label{thm:main}
Let $\gamma:[0,1]\to[0, \infty)$ be a  measurable  and integrable function and let $\{\mu^N\}_N$ be a sequence of probability measures on $\Omega_N$ associated to $\gamma$ and satisfying the two conditions 
\begin{equation*}
    H(\mu^N|\bar\nu_N)\lesssim N \ \ \ \textrm{and} \ \ \ \mu^N\le\bar\nu_N.
\end{equation*}
Then, for $\theta\ge1$ and for every $t\in [0,T]$, the sequence of empirical measures $\{\pi^{N}_t\}_N$ converges in probability to the absolutely continuous measure $\rho(t, u)\de u$, that is, for any $\delta>0$ and for any $H\in C^2(\mathbb{T})$, we have that
\begin{equation*}
    \mylim_{N\to\infty}\prob_{\mu^N} \left\{\left|\langle \pi^{N}_{t},H\rangle-\int_0^1\rho(t,u)H(u)\de u \right|>\delta\right\} =0,
\end{equation*}
where $\rho(t,u)$ is the unique weak solution of \eqref{eq:pde_zr} with initial condition $\gamma$.   
\end{theorem}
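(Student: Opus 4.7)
The plan is to follow the classical Kipnis-Landim programme for hydrodynamic limits of zero-range processes, with the substantial modifications the authors describe: a non-stationary reference measure, spectral gap inputs in place of translation invariance, and an entropy/stochastic-domination pair in place of local equilibrium. The strategy reduces the convergence-in-probability statement to a tightness-plus-uniqueness argument. It suffices to prove that the sequence $\{Q^N\}_N$ is tight on $\mathcal{D}([0,T],\mathcal{M}_+)$ and that every limit point $Q^*$ is concentrated on trajectories of the form $\pi_t(\de u)=\rho(t,u)\de u$ with $\rho$ a weak solution of \eqref{eq:pde_zr}. Since Lemma \ref{lemma:uniqueness} gives uniqueness of this solution and the initial profile is identified by \eqref{eq:initial_profile}, the limit point is unique and its $\delta$-concentration yields the claim.

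First I would apply Dynkin's formula to $\langle\pi_t^N,H_t\rangle$ for $H\in C^{1,2}([0,T]\times[0,1])$, producing the martingale
\begin{equation*}
    M_t^N(H)=\langle\pi_t^N,H_t\rangle-\langle\pi_0^N,H_0\rangle-\int_0^t\langle\pi_s^N,\partial_s H_s\rangle \de s-\int_0^t N^2\mathcal{L}^N\langle\pi_s^N,H_s\rangle \de s.
\end{equation*}
A discrete integration by parts recasts the bulk contribution as $\frac1N\sum_{x\in I_N}g(\eta_s(x))\Delta_N H_s(x/N)$ plus discrete gradient terms at $x=1,N-1$, while the boundary generator contributes terms of order $N^{2-\theta}$ involving $g(\eta_s(1))$, $g(\eta_s(N-1))$, and the reservoir parameters. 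Using the product form of $\bar\nu_N$ and the stochastic-domination hypothesis to control $\expected_{\mu^N}[g(\eta_s(x))^2]$ (via Lemma \ref{lemma:attractiveness} and Lemma \ref{lemma:Phi_lips}), the predictable quadratic variation of $M_t^N(H)$ is shown to vanish as $N\to\infty$, so that $M_t^N(H)\to0$ in $L^2$.

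Next I would establish tightness of $\{Q^N\}_N$ via the Aldous-Rebolledo criterion, which in this framework reduces to tightness of the real-valued processes $\langle\pi_t^N,H\rangle$ for each $H\in C^2([0,1])$. The compact containment follows from $\mu^N\le\bar\nu_N$, attractiveness, and the explicit fugacity profile \eqref{eq:fugacity}, which together uniformly bound $\expected_{\mu^N}[\eta_t(x)]$. The modulus of continuity is controlled using the Dynkin decomposition: the martingale part vanishes in $L^2$, and the integral parts are bounded by the same domination argument combined with the sub-linear growth bounds of Remark \ref{remark:Psi}. A standard argument shows every limit point is concentrated on continuous trajectories; absolute continuity of $\pi_t$ with respect to Lebesgue measure then follows by combining the entropy bound $H(\mu^N|\bar\nu_N)\lesssim N$ with the entropy inequality in the appendix and the boundedness of $\bar\nu_N$-expectations.

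The characterisation of limit points as weak solutions of \eqref{eq:pde_zr} is the central and most demanding step, and I expect this to be the main obstacle. Passing to the limit in the Dynkin identity requires replacing the microscopic cylinder $g(\eta_s(x))$ in the discrete Laplacian by the macroscopic flux $\Phi(\rho_s(x/N))$. I would achieve this through the standard two-step procedure adapted to the reference measure $\nu_*$ that is invariant for the bulk generator alone: the one-block estimate replaces $g(\eta_s(x))$ by $\tilde g=\Phi$ of the empirical density in a microscopic box, and the two-block estimate bridges to a macroscopic box. Following the approach of \cite{fmts23}, each step is reduced by an entropy argument (with the hypothesis $H(\mu^N|\bar\nu_N)\lesssim N$ transferred to $\nu_*$ using its explicit product form) to the variance estimate \cite[Corollary A3.1.2]{kl99}, which is controlled by the spectral gap bounds of Assumption (SG) and Lemma \ref{lemma:coupled_gap}. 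The discrepancy of order $N^{1-\theta}$ between the Dirichlet form of the full dynamics and the carré-du-champ of the bulk alone is precisely what forces the restriction $\theta\ge1$. A parallel but simpler boundary replacement handles $g(\eta_s(1))$ and $g(\eta_s(N-1))$, producing the Robin ($\theta=1$) or Neumann ($\theta>1$) conditions in \eqref{eq:pde_zr}, with the $\tilde\kappa=\kappa\boldsymbol{1}_{\{1\}}(\theta)$ dichotomy arising naturally from the $N^{2-\theta}$ scaling. The energy estimate from the appendix then ensures $\Phi(\rho)\in L^2([0,T],\mathcal{H}^1)$, legitimising the integration by parts in the limit identity \eqref{eq:intergal_pde}, and Lemma \ref{lemma:uniqueness} concludes the argument.
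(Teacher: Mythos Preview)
Your proposal is essentially correct and follows the paper's approach closely: Dynkin martingales, Aldous tightness, replacement lemmas via entropy inequality plus spectral-gap-driven Rayleigh estimates (including the coupled-gap Lemma~\ref{lemma:coupled_gap} for the two-block step), the $N^{1-\theta}$ Dirichlet-form discrepancy forcing $\theta\ge1$, and the energy estimate for $\Phi(\rho)\in L^2([0,T],\mathcal{H}^1)$.

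One point to correct: you attribute the absolute continuity of limit points to the entropy bound plus the entropy inequality, but the paper does \emph{not} use entropy for this step. The Kipnis--Landim argument for absolute continuity on the torus relies on conservation of the number of particles, which fails here because of the reservoirs; the paper explicitly bypasses this by using the stochastic-domination hypothesis $\mu^N\le\bar\nu_N$ together with attractiveness (Lemma~\ref{lemma:concentration_lebesgue}). Concretely, for any continuous $G\ge0$ one couples $\mu^N S^N(t)$ below $\bar\nu_N$ and uses the association of $\bar\nu_N$ to $\bar\rho_\theta$ to obtain $Q^*\{\langle\pi,G\rangle\le\int G(\bar\rho_\theta+\eps)\}=1$, which forces $\pi\ll\de u$. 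The same mechanism (attractiveness plus the moment bounds of Lemma~\ref{lemma:bounded_moments}) is what yields $\rho\in L^2([0,T]\times[0,1])$ in Lemma~\ref{lemma:rho_L2}, another step you did not mention explicitly. Your entropy-based route for these two items would not obviously close without an a priori control on $\expected_{\mu^N}[\eta_t(x)]$, which is precisely what stochastic domination supplies.
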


\begin{remark} As a consequence of \cite[Lemma 2.3.5]{kl99}, the stochastic domination hypothesis \eqref{eq:stoch_domination} holds if, for instance, $\mu^N$ is a product measure of the form \eqref{eq:prod_measure} associated to a fugacity function bounded from above by the fugacity of the stationary measure $\bar\nu_N$. Moreover, because of this hypothesis, the profile $\gamma$ appearing in the statement of Theorem \ref{thm:main} needs to be bounded from above by the profile $\bar\rho_\theta$ given in Remark \ref{remark:limit_fugacity_density}. A natural sequence $\{\mu^N\}_N$ satisfying both \eqref{eq:stoch_domination} and \eqref{eq:initial_profile} is the sequence $\{\nu^N_{\gamma(\cdot)}\}_N$ of product measures with slowly varying parameter associated to a profile $\gamma:[0,1]\to[0, \infty)$ such that $\gamma(u)\le\bar\rho_\theta(u)$ for all $u\in[0,1]$; see \cite[Definition 4.2 and Proposition 4.1]{fmn21} for details.
\end{remark}

The proof of this theorem follows the so-called entropy method developed by \cite{gpv88} and it consists in two main steps. We first prove that the sequence of empirical measures $\{\pi_t^N(\de u)\}_N$ is tight, so that from Prohorov's Theorem we know that there exists a limit point $\pi_t(\de u)$. Then, we uniquely characterise this limit point by showing that it is absolutely continuous with respect to the Lebesgue measure, namely $\pi_t(\de u)=\rho(t,u)\de u$ where $\rho(t,u)$ is the unique weak solution of the hydrodynamic equation \eqref{eq:pde_zr}.

\subsection{The Hydrostatic Limit}
We note that, as a corollary of the previous result, we can obtain the hydrostatic limit of our process. All we need to prove is that the invariant measure $\bar\nu_N$ satisfies the conditions of Theorem \ref{thm:main}. Note that the entropy bound and the stochastic domination hypothesis trivially hold, so it only remains to show that $\bar\nu_N$ is associated to a measurable and integrable profile. The natural candidate is the stationary solution of the hydrodynamic equation, namely the solution $\bar\rho_\theta(u)$ to
\begin{equation*}
    \left\{ 
    \begin{array}{ll}
    \Delta \Phi(\bar\rho_\theta(u))=0 &\mbox{ for } u \in (0,1),
    \\\partial_u \Phi (\bar\rho_\theta(0)) = \tilde\kappa(\lambda \Phi(\bar\rho_\theta(0))-\alpha),
    \\\partial_u \Phi (\bar\rho_\theta(1)) = \tilde\kappa(\beta-\delta\Phi(\bar\rho_\theta(1))).
    \end{array}
    \right.
\end{equation*}
Then, we need to control
\begin{equation*}
    \bar\nu_N \left\{\eta\in\Omega_N: \left|\frac{1}{N} \sum_{x=1}^{N-1} H\left(\frac{x}{N}\right)\eta(x)
    - \int_0^1 H(u)\bar\rho_\theta(u)\de u \right| > \delta\right\},
\end{equation*}
which can be bounded from above by the sum of 
\begin{equation}\label{eq:hydrostatic_1}
    \bar\nu_N \left\{\eta\in\Omega_N: \left|\frac{1}{N} \sum_{x=1}^{N-1} H\left(\frac{x}{N}\right)\big(\eta(x)-R(\varphi_N(x))\big)
    \right| > \frac \delta 2\right\}
\end{equation}
and
\begin{equation}\label{eq:hydrostatic_2}
    \bar\nu_N \left\{\eta\in\Omega_N: \left|\frac{1}{N} \sum_{x=1}^{N-1} H\left(\frac{x}{N}\right)R(\varphi_N(x))
    - \int_0^1 H(u)\bar\rho_\theta(u)\de u \right| > \frac \delta 2\right\},
\end{equation}
with the function $R$ defined in \eqref{eq:R}. Since $R(\varphi_N(x))=E_{\bar\nu_N}[\eta(x)]$, by the Chebyshev's inequality and since $\bar\nu_N$ is a product measure, we see that
\begin{align*}
    \eqref{eq:hydrostatic_1}&\le \frac{4}{\delta^2} E_{\bar\nu_N}\left[ \left(\frac{1}{N}\sum_{x=1}^{N-1} H\left(\frac{x}{N}\right)\big(\eta(x)-R(\varphi_N(x)\big)\right)^2\right]
    \\& \le\frac{4}{\delta^2N^2} \sum_{x=1}^{N-1} H\left(\frac{x}{N}\right)^2 \text{Var}_{ \bar\nu_N}\big(\eta(x)\big)\lesssim\frac{1}{N},
\end{align*}
where in the last inequality we used Lemma \ref{lemma:bounded_moments}. Now, to treat \eqref{eq:hydrostatic_2}, note that the expression inside $\bar\nu_N$ is deterministic: then, recalling Remark \ref{remark:limit_fugacity_density} and since
$\bar\rho_\theta(u)=R(\bar\varphi_\theta(u))$, we need to prove that
\begin{equation*}
    \mylim_{N\to\infty}\frac{1}{N} \sum_{x=1}^{N-1} H\left(\frac{x}{N}\right)R(\varphi_N(x))
    = \int_0^1 H(u)R(\bar\varphi_\theta(u))\de u.
\end{equation*}
This can be readily done by noting that $\mylim_{N\to\infty}\left|R(\bar\varphi_N(x))-R(\bar\varphi_\theta(\frac{x}{N}))\right|=0$; 
we leave the details to the reader.

\section{Proof of Theorem \ref{thm:main}} \label{sec:proof} 
We present in this section the proof of our main theorem. Before we do that, we introduce a sequence of martingales related to the process studied in this work, which will constitute the building blocks of the whole proof.

\subsection{Dynkin's Martingales}
{Fix $G\in C^2([0,1])$, namely a twice continuously differentiable function. Note that, for simplicity, here $G$ is only space dependent.} By Dynkin's formula, the process $\{M_t^N(G), t\ge0\}$ defined via
\begin{equation}\label{eq:MGt}
    M_t^N(G):=\left\langle \pi^N_t, G\right\rangle-\left\langle \pi^N_0, G\right\rangle-\int_0^tN^2\mathcal{L}^N\left\langle \pi^N_s, G\right\rangle \de s
\end{equation}
is a martingale. The dynamics play a role in the compensator term, namely the integral term above. Recalling the definition of the generator \eqref{eq:generator}, we write 
\begin{align}
    N^2\mathcal{L}^N&\left\langle \pi^N_s,G\right\rangle  = \frac{1}{N}\sum_{x=2}^{N-2}g(\eta_s(x))\Delta_N G\left(\frac{x}{N}\right)\nonumber
    \\&\phantom{=} +g(\eta_s(1))\nabla^+_N G\left(\frac{1}{N}\right)- g(\eta_s(N-1))\nabla^-_NG\left(\frac{N-1}{N}\right)\label{eq:N^2L^N}
    \\&\phantom{=} +\kappa\left(\frac{\alpha-\lambda g(\eta_s(1))}{N^{\theta-1}}\right) G\left(\frac{1}{N}\right)+\kappa\left(\frac{\beta-\delta g(\eta_{s}(N-1))}{N^{\theta-1}}\right)G\left(\frac{N-1}{N}\right),\nonumber
\end{align}
where
\begin{equation}\label{eq:discrete_operators}
    \begin{split}
    &\Delta_N G\left( \frac{x}{N} \right) := N^2 \left[ G\left( \frac{x+1}{N} \right) + G\left( \frac{x-1}{N} \right) - 2G\left( \frac{x}{N} \right) \right],
    \\&\nabla_N^+ G\left( \frac{x}{N} \right) := N \left[ G\left( \frac{x+1}{N} \right) - G\left( \frac{x}{N} \right) \right],
    \\&\nabla_N^- G\left( \frac{x}{N} \right) := N \left[ G\left( \frac{x}{N} \right) - G\left( \frac{x-1}{N} \right) \right].
    \end{split}
\end{equation}
Moreover, the   quadratic variation of the martingale $\{M_t^N(G), t\ge0\}$ is given by
\begin{equation}\label{eq:quadratic_variation}
    \left\langle M^N(G)\right\rangle_t=\int_0^t\left\{N^2\mathcal{L}^N\left\langle \pi^N_s, G\right\rangle^2-2N^2\left\langle \pi^N_s, G\right\rangle \mathcal{L}^N \left\langle \pi^N_s, G\right\rangle\right\}\de s.
\end{equation}
After standard calculations, we can see that $\left \langle M^N(G)\right\rangle_t=\int_0^tB^N(G)_s\de s$, where
\begin{equation}\label{eq:BN}
\begin{split}
    B^N(G)_s& :=  \sum_{\substack{x, y\in I_N \\ |x-y|=1}}g(\eta_s(x))\left[G\left(\frac{y}{N}\right)-G\left(\frac{x}{N}\right)\right]^2 
    \\& +\kappa\left(\frac{\alpha +\lambda g(\eta_s(1))}{N^\theta}\right)G^2\left(\frac{1}{N}\right)+\kappa\left(\frac{\beta + \delta g(\eta_s(N-1))}{N^\theta}\right) G^2\left(\frac{N-1}{N}\right).
    \end{split}
\end{equation}
We are now ready for show tightness and then characterizing the limit points. We start with the former.

\subsection{Tightness}\label{sec:proof_tightness}
\begin{proposition}\label{prop:tightness} Let $\theta\ge 1$ and suppose that the rate function $g$ satisfies \eqref{g_non_decreasing}. Assume that the sequence $\{\mu^N\}_N$ is associated to a measurable and integrable initial profile $\gamma:[0,1]\to[0, \infty)$ in the sense of \eqref{eq:initial_profile} and satisfies \eqref{eq:stoch_domination}. Then, the sequence of measures $\{Q^N\}_N$ is tight. 
\end{proposition}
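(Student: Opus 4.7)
The plan is to reduce tightness of $\{Q^N\}_N$ in $\mc{D}([0,T], \mc{M}_+)$ to tightness of the real-valued projections $\{\langle \pi^N_\cdot, G\rangle\}_N$ in $\mc{D}([0,T], \mathbb{R})$ for $G$ ranging over a countable dense subset of $C^2([0,1])$, via Mitoma's criterion. For each such $G$, I would verify Aldous' criterion: (i) for each fixed $t \in [0,T]$, the family $\{\langle \pi^N_t, G\rangle\}_N$ is tight in $\mathbb{R}$; and (ii) for every $\varepsilon > 0$,
\begin{equation*}
    \mylim_{\gamma \downarrow 0} \mylimsup_{N \to \infty} \mysup_{\tau,\, \vartheta \le \gamma} \prob_{\mu^N}\bigl[|\langle \pi^N_{\tau+\vartheta}, G\rangle - \langle \pi^N_\tau, G\rangle| > \varepsilon\bigr] = 0,
\end{equation*}
where the supremum is over stopping times $\tau \le T$ and deterministic $\vartheta \in [0, \gamma]$.

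The crucial a priori estimate on which both steps rest is the uniform bound $\mysup_{s\le T,\,x \in I_N,\,N \ge 1} \expected_{\mu^N}[g(\eta_s(x))] \le \varphi^*$, together with the analogous uniform bound for $\expected_{\mu^N}[\eta_s(x)]$. Both follow by combining attractiveness (Lemma \ref{lemma:attractiveness}, available thanks to assumption (ND)), the stochastic domination $\mu^N \le \bar\nu_N$, and the stationarity of $\bar\nu_N$: coupling the two initial conditions and using that $k\mapsto g(k)$ and $k\mapsto k$ are non-decreasing yields $\expected_{\mu^N}[g(\eta_s(x))] \le \expected_{\bar\nu_N}[g(\eta_s(x))] = \expected_{\bar\nu_N}[g(\eta(x))] = \bar\varphi_N(x) \le \varphi^*$, with an analogous chain for $\eta_s(x)$ producing a bound of the form $R(\mymax_{x}\bar\varphi_N(x))$ which is finite uniformly in $N$. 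This immediately gives a uniform $L^1$ bound on $\langle \pi^N_t, G\rangle$, establishing (i) via Markov's inequality.

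For (ii), I would use the Dynkin decomposition \eqref{eq:MGt} to write
\begin{equation*}
    \langle \pi^N_{\tau+\vartheta}, G\rangle - \langle \pi^N_\tau, G\rangle = \int_\tau^{\tau+\vartheta} N^2 \mc{L}^N \langle \pi^N_s, G\rangle \de s + \bigl(M^N_{\tau+\vartheta}(G) - M^N_\tau(G)\bigr),
\end{equation*}
and handle the two pieces separately. Inspecting \eqref{eq:N^2L^N}, the bulk term $\tfrac{1}{N}\sum_x g(\eta_s(x))\Delta_N G(x/N)$ is bounded in $L^1$ by $\|\Delta G\|_\infty\cdot\varphi^*$, the two discrete-gradient boundary corrections are bounded by $\|\nabla G\|_\infty\cdot\varphi^*$, and the reservoir contributions carry the factor $N^{1-\theta}$, which is at most $1$ precisely because $\theta \ge 1$. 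Hence the compensator is uniformly bounded in $L^1$ in $s, N$, yielding an $L^1$ bound of order $\gamma$ for its integral over $[\tau, \tau+\vartheta]$, from which the required probability bound follows by Markov. For the martingale piece, Chebyshev combined with optional stopping and the form \eqref{eq:quadratic_variation} of the quadratic variation give
\begin{equation*}
    \prob_{\mu^N}\bigl[|M^N_{\tau+\vartheta}(G) - M^N_\tau(G)| > \varepsilon\bigr] \le \frac{1}{\varepsilon^2} \expected_{\mu^N}\!\left[\int_\tau^{\tau+\vartheta} B^N(G)_s \de s\right],
\end{equation*}
and from \eqref{eq:BN} together with $|G(y/N) - G(x/N)|^2 \lesssim \|G'\|_\infty^2/N^2$ for neighbouring sites and the a priori rate estimate, $\expected[B^N(G)_s] \lesssim \|G'\|_\infty^2/N + \|G\|_\infty^2/N^\theta$, which vanishes as $N \to \infty$.

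The main obstacle is precisely the uniform $L^1$ control of the unbounded rates $g(\eta_s(x))$: in a zero-range process with unbounded occupation variables and only \eqref{eq:function_g} imposed on $g$, no deterministic bound is available, and without such control the compensator could fail to be $O(\gamma)$. Attractiveness combined with the assumed stochastic domination against the explicit, product-form NESS $\bar\nu_N$ bypasses this issue. The restriction $\theta \ge 1$ is essential as well: it is what turns the otherwise-diverging reservoir prefactor $N^{1-\theta}$ into a bounded contribution, so that the boundary generator does not overwhelm the compensator in the oscillation estimate.
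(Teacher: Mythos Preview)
Your overall strategy---Mitoma/Aldous reduction, Dynkin decomposition into compensator plus martingale, and moment control via attractiveness and stochastic domination against $\bar\nu_N$---matches the paper's proof. The a priori bound $\expected_{\mu^N}[g(\eta_s(x))]\le\bar\varphi_N(x)$ is exactly the paper's Lemma~\ref{lemma:g_moments}, and your treatment of the martingale piece and of Condition~(i) is correct.

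There is, however, a genuine gap in your handling of the compensator. You claim that a uniform $L^1$ bound on $N^2\mc{L}^N\langle\pi^N_s,G\rangle$ in $s,N$ ``yields an $L^1$ bound of order $\gamma$ for its integral over $[\tau,\tau+\vartheta]$''. This does not follow: since $\tau$ is a \emph{random} stopping time, the indicator $\boldsymbol{1}_{[\tau,\tau+\vartheta]}(s)$ and the integrand are both $\mc{F}_s$-measurable and can be correlated, so one cannot pass from $\sup_s\expected[|F_s|]\le M$ to $\expected[\int_\tau^{\tau+\vartheta}|F_s|\,\de s]\le M\gamma$. The paper is careful on exactly this point and handles the bulk and boundary contributions differently. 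For the bulk sum it uses the pathwise domination $\sum_x\eta_s(x)\le\sum_x\eta_0(x)+Y_s$, where $Y_s$ is the Poisson count of injected particles; monotonicity of $Y$ then replaces the integrand by a quantity independent of $s$, so the random interval causes no trouble. For the four boundary terms it uses Cauchy--Schwarz in time together with the \emph{second}-moment bound $\expected_{\mu^N}[g(\eta_s(x))^2]\le C$ (also from Lemma~\ref{lemma:g_moments}), obtaining an $O(\sqrt{\gamma})$ estimate.

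The fix within your framework is immediate: since $g^2$ is also monotone under Assumption~(ND), your attractiveness argument upgrades verbatim to second moments, and Cauchy--Schwarz in the time variable gives
\[
\expected_{\mu^N}\!\left[\int_\tau^{\tau+\vartheta}|F_s|\,\de s\right]
\le \sqrt{\vartheta}\,\Big(\int_0^{T}\expected_{\mu^N}[F_s^2]\,\de s\Big)^{1/2}
=O(\sqrt{\gamma}),
\]
which suffices for Aldous. With this correction your argument is complete and, by using attractiveness uniformly for all terms rather than the Poisson-counting trick for the bulk, arguably cleaner than the paper's.
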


\begin{remark} We highlight that tightness of the sequence $\{Q^N\}_{N\ge 0}$ is also true if drop the hypotheses \eqref{g_non_decreasing} and \eqref{eq:stoch_domination} and require instead that the function $g$ is bounded; see Remark \ref{remark:g_bounded_instead} for more details.
\end{remark}

By \cite[Proposition 4.1.7]{kl99}, in order to prove Proposition \ref{prop:tightness}, it suffices to show tightness of the sequence of measures corresponding to the sequence of real-valued processes $\{\langle \pi^N_t, G\rangle\}_N$ for every $G$ in a dense subset of $C([0,1])$ with respect to the uniform topology. Here, we consider $G\in C^2([0,1])$. By Aldous' criterion \cite{ald78}, it is therefore enough to show that the following conditions are satisfied:  

\begin{enumerate}
\item[] \textsc{Condition 1}. For every $t\in[0,T]$,
\begin{equation*}
    \mylim_{A\to\infty}\mylimsup_{N\to\infty}\prob_{\mu^N}\left\{\frac{1}{N}\sum_{x\in I_N}\eta_t(x)\ge A\right\}=0.
\end{equation*}
\item[] \textsc{Condition 2}. For every $\delta>0$,
\begin{equation*}
    \mylim_{\gamma\to0}\mylimsup_{N\to\infty}\mysup_{\substack{\tau \in \mathfrak{T}_T \\ \omega \le \gamma}}\prob_{\mu^N}\left\{\left|\frac{1}{N}\sum_{x\in I_N}G\left(\frac{x}{N}\right)\left(\eta_{\tau+\omega}(x)-\eta_{\tau}(x)\right)\right|>\delta\right\}=0,
\end{equation*}
where $\mathfrak T_T$ is the family of all stopping times bounded by $T$.
\end{enumerate}

\textbf{Proof of \textsc{Condition 1}.} Define the process $Y_t:=Y^l_t + Y^r_t$, where $Y^l_t$ and $Y^r_t$ denote the number of particles created at the left boundary and the right boundary, respectively, up to time $tN^2$. We have the following natural bound:
\begin{equation}\label{eq:bound_particles}
    \sum_{x=1}^{N-1}\eta_t(x)\le \sum_{x=1}^{N-1}\eta_0(x)+Y_t,
\end{equation}
and thus
\begin{align*}
    \prob_{\mu^N}\left\{\frac{1}{N}\sum_{x=1}^{N-1}\eta_t(x)\ge A\right\}&\le \prob_{\mu^N}\left\{\frac{1}{N}\sum_{x=1}^{N-1}\eta_0(x)\geq \frac{A}{2}\right\}+\prob_{\mu^N}\left\{\frac{Y_t}{N}\geq\frac{A}{2}\right\}
    \\&=: A_N+B_N.
\end{align*}
Note that $\mylim_{A\to\infty}\mylimsup_{N\to\infty}A_N=0$, as $\mu^N$ is associated to an integrable profile $\gamma$. 
On the other hand, since the process is accelerated by $N^2$, under $\prob_{\mu^N}$ we have that $Y^l_t$ and $Y^r_t$ are Poisson processes with intensity $N^{2-\theta}\alpha$ and $N^{2-\theta}\beta$, respectively, and hence, since $\theta\ge1$,
\begin{align*}
    B_N\le\frac{2}{AN}\expected_{\mu^N}[Y_t]  =\frac{2}{AN}\expected_{\mu^N}\left[Y^l_t\right] + \frac{2}{AN}\expected_{\mu^N}\left[Y^r_t\right]
    &=\frac{2}{AN}\left(N^{2-\theta}\alpha t + N^{2-\theta}\beta t\right) 
    \\&\le\frac{2(\alpha+\beta)t}{A},
\end{align*}
which goes to zero as $A\to\infty$. 
We note that in the last inequality in the previous display we used the fact that $\theta\geq 1$.

\vspace{1em}
\textbf{Proof of \textsc{Condition 2}.} By \eqref{eq:MGt}, it is enough to show the following.
\begin{enumerate}
\item[]\textsc{Condition 2.1}. For every $\delta>0$,
\begin{equation*}          
    \mylim_{\gamma\to0}\mylimsup_{N\to\infty}\mysup_{\tau \in \mathfrak T_T\atop \omega \le \gamma}\prob_{\mu^N}\left\{\left|\int_{\tau}^{\tau+\omega}N^2\mathcal{L}^N\langle\pi^N_s, G\rangle \de s\right|>\delta\right\}=0.
\end{equation*}
\item[] \textsc{Condition 2.2}. For every $\delta>0$,
\begin{equation*}
    \mylim_{\gamma\to0}\mylimsup_{N\to\infty}\mysup_{\tau \in \mathfrak T_T\atop \omega \le \gamma}\prob_{\mu^N}\left\{\left|M^N(G)_{\tau+\omega}-M^N(G)_{\tau}\right|>\delta\right\}=0.
\end{equation*}
\end{enumerate}
By \eqref{eq:N^2L^N}, in order to show \textsc{Condition 2.1}, it suffices to show that, for all $\delta>0$,
\begin{align} 
    &\mylim_{\gamma\to0}\mylimsup_{N\to\infty}\mysup_{\tau \in \mathfrak T_T\atop \omega \le \gamma}\prob_{\mu^N}\left\{\left|\int_{\tau}^{\tau+\omega}\frac{1}{N}\sum_{x=2}^{N-2}g(\eta_s(x))\Delta_N G\left(\frac{x}{N}\right)\de s\right|>\delta\right\} = 0,\label{eq:c2.1.1}
    \\&\mylim_{\gamma\to0}\mylimsup_{N\to\infty}\mysup_{\tau \in \mathfrak T_T\atop \omega \le \gamma}\prob_{\mu^N}\left\{\left|\int_{\tau}^{\tau+\omega}g(\eta_s(1))\nabla^+_NG\left(\frac{1}{N}\right)\de s\right|>\delta\right\} = 0,\label{eq:c2.1.2}
    \\&\mylim_{\gamma\to0}\mylimsup_{N\to\infty}\mysup_{\tau \in \mathfrak T_T\atop \omega \le \gamma}\prob_{\mu^N}\left\{\left|\int_{\tau}^{\tau+\omega}g(\eta_{s}(N-1))\nabla_N^-G\left(\frac{N-1}{N}\right)\de s\right|>\delta\right\} = 0,\label{eq:c2.1.3}
    \\&\mylim_{\gamma\to0}\mylimsup_{N\to\infty}\mysup_{\tau \in \mathfrak T_T\atop \omega \le \gamma}\prob_{\mu^N}\left\{\left|\int_{\tau}^{\tau+\omega}\left(\frac{\alpha-\lambda g(\eta_s(1))}{N^{\theta-1}}\right) G\left(\frac{1}{N}\right)\de s\right|>\delta\right\} = 0,\label{eq:c2.1.4}
    \\&\mylim_{\gamma\to0}\mylimsup_{N\to\infty}\mysup_{\tau \in \mathfrak T_T\atop \omega \leq \gamma}\prob_{\mu^N}\left\{\left|\int_{\tau}^{\tau+\omega}\left(\frac{\beta - \delta g(\eta_{s}(N-1))}{N^{\theta-1}}\right)G\left(\frac{N-1}{N}\right)\de s\right|>\delta\right\} = 0.\label{eq:c2.1.5}
\end{align}

We start by showing \eqref{eq:c2.1.1}. Since $G$ is of class $C^2$ and $g$ increases at most linearly (recall hypothesis \eqref{eq:function_g}, {which implies $g(x)\le g^*x$}), the integral in \eqref{eq:c2.1.1} is bounded by
\begin{equation*}
    C(g^*, G)\int_{\tau}^{\tau+\omega}\frac{1}{N}\sum_{x=2}^{N-2}\eta_s(x)\de s.
\end{equation*}
By \eqref{eq:bound_particles}, this is bounded from above by
\begin{equation*}
    C(g^*, G)\left[\frac{\omega}{N}\sum_{x=1}^{N-1}\eta_0(x)+\int_{\tau}^{\tau+\omega}\frac{Y_s}{N} \de s\right].
\end{equation*}
Then, observing that $Y_s$ is non decreasing,  it is enough to show that, for any $\delta>0$
\begin{equation}\label{eq:c2.1.1.1}
    \mylim_{\omega\to0}\mylimsup_{N\to\infty}\prob_{\mu^N}\left\{\frac{\omega}{N}\sum_{x=1}^{N-1}\eta_0(x)>\delta\right\}=0
\end{equation}
and
\begin{equation}\label{eq:c2.1.1.2}
    \mylim_{\omega\to0}\mylimsup_{N\to\infty}\prob_{\mu^N}\left\{\frac{\omega}{N}Y_{T+\omega}>\delta\right\}=0.
\end{equation}
As in the proof of \textsc{Condition 1}, \eqref{eq:c2.1.1.1} holds because $\mu^N$ is associated to an integrable profile $\gamma$,  and \eqref{eq:c2.1.1.2} follows from noting that, since $\theta\ge1$,
\begin{equation*}
    \prob_{\mu^N}\left\{\frac{\omega}{N}Y_{T+\omega}>\delta\right\}\le\frac{\omega}{\delta N}\expected_{\mu^N}\left[Y_{T+\omega}\right]=\frac{\omega (\alpha +\beta) (T+\omega)}{\delta N^{\theta-1}}\leq\frac{\omega (\alpha +\beta) (T+\omega)}{\delta},
\end{equation*}
which goes to zero as $\omega\to 0$.

As for the proof of \eqref{eq:c2.1.2}, \eqref{eq:c2.1.3}, \eqref{eq:c2.1.4} and \eqref{eq:c2.1.5}, we will use the following lemma.

\begin{lemma}\label{lemma:g_moments} Under assumptions \eqref{g_non_decreasing} and \eqref{eq:stoch_domination}, for every $s\ge0$ and $x\in I_N$, it holds
\begin{align}
    &\expected_{\mu^N}[g(\eta_s(x))]\le \bar\varphi_N(x), \label{eq:g_moment_1}
    \\&\expected_{\mu^N}[g(\eta_s(x))^2]\le g^*\bar\varphi_N(x)+\bar\varphi_N(x)^2. \label{eq:g_moment_2}
\end{align}
Consequently, for $\ell=1, 2$, 
\begin{equation*}
    \expected_{\mu^N}[g(\eta_s(x))^\ell]\le C, 
\end{equation*}
where $C$ is a positive constant that only depends on the parameters $\alpha, \lambda, \beta, \delta$.
\end{lemma}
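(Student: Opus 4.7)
The plan is to reduce the statement to a computation under the stationary measure $\bar\nu_N$ via attractiveness, and then to evaluate the moments of $g(\eta(x))$ and $g(\eta(x))^2$ explicitly using the product structure of $\bar\nu_N$.

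\textbf{Step 1: Transferring the bound to $\bar\nu_N$.} By Lemma \ref{lemma:attractiveness}, the process is attractive under hypothesis \eqref{g_non_decreasing}, and by Lemma \ref{lemma:inv_measure}, $\bar\nu_N$ is invariant under the semigroup $S(s)$. Starting from $\mu^N \le \bar\nu_N$ in \eqref{eq:stoch_domination}, attractiveness gives $\mu^N S(s) \le \bar\nu_N S(s) = \bar\nu_N$ for every $s \ge 0$. Since $g$ is non-negative and non-decreasing, both $\eta \mapsto g(\eta(x))$ and $\eta \mapsto g(\eta(x))^2$ are monotone functions on $\Omega_N$ in the sense of \eqref{eq:def_domination}. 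Hence, by the Markov property,
\begin{equation*}
    \expected_{\mu^N}[g(\eta_s(x))^\ell] \le E_{\bar\nu_N}[g(\eta(x))^\ell], \qquad \ell = 1, 2.
\end{equation*}

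\textbf{Step 2: Computing the moments under $\bar\nu_N$.} Since $\bar\nu_N$ has marginals given by \eqref{eq:prod_measure} at fugacity $\bar\varphi_N(x)$, the first moment follows from a shift of the summation index, using $g(0) = 0$:
\begin{equation*}
    E_{\bar\nu_N}[g(\eta(x))] = \frac{1}{Z(\bar\varphi_N(x))}\sum_{k\ge 1} g(k)\frac{\bar\varphi_N(x)^k}{g(k)!} = \frac{\bar\varphi_N(x)}{Z(\bar\varphi_N(x))}\sum_{j\ge 0}\frac{\bar\varphi_N(x)^j}{g(j)!} = \bar\varphi_N(x),
\end{equation*}
which is \eqref{eq:g_moment_1}. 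For the second moment, after the same shift I would decompose $g(j+1) = [g(j+1)-g(j)] + g(j)$ and invoke $|g(j+1)-g(j)| \le g^*$ from \eqref{eq:function_g}; the first sum contributes $g^*\bar\varphi_N(x)$ while the second reduces to $\bar\varphi_N(x)\cdot E_{\bar\nu_N}[g(\eta(x))] = \bar\varphi_N(x)^2$, yielding \eqref{eq:g_moment_2}.

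\textbf{Step 3: Uniform bound.} The consequence is immediate once one observes that $\bar\varphi_N$ is affine in $x$, so its supremum over $I_N$ is attained at $x = 1$ or $x = N-1$. The explicit values at those endpoints are precisely the two fractions appearing in \eqref{eq:params_condition}, and for $\theta \ge 1$ a direct inspection of \eqref{eq:fugacity} shows that both remain uniformly bounded in $N$ by a constant depending only on $\alpha, \lambda, \beta, \delta$. I expect no serious obstacle here: attractiveness converts the initial domination into a pointwise-in-time bound on monotone observables, and the remainder is a standard series manipulation; the only mildly delicate point is confirming that $\eta \mapsto g(\eta(x))^2$ is genuinely monotone in the product order, which holds since $g$ is non-negative and non-decreasing.
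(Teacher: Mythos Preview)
Your proposal is correct and follows essentially the same approach as the paper: reduce to $\bar\nu_N$ via attractiveness and monotonicity of $g(\eta(x))^\ell$, then compute the moments by the index shift $g(k)/g(k)! = 1/g(k-1)!$ together with $g(k)\le g^*+g(k-1)$. The paper's proof is slightly terser on Step~3 (it simply notes that $\bar\varphi_N(x)$ is uniformly bounded), but the content is the same.
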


\begin{remark}\label{remark:g_bounded_instead}
In the proof of Proposition \ref{prop:tightness},  hypotheses \eqref{g_non_decreasing} and \eqref{eq:stoch_domination} are only used in Lemma \ref{lemma:g_moments} above. Since this result is trivial when $g$ is bounded, in this case such hypotheses are not needed for tightness to hold. 
\end{remark}

\begin{proof}[Proof of Lemma \ref{lemma:g_moments}] For every $x\in I_N$, by \eqref{g_non_decreasing} the function $h_x:\Omega_N\to\mathbb R$, defined by $h_x(\eta):=[g(\eta(x))]^\ell$, is monotone. Hence, by \eqref{g_non_decreasing}, \eqref{eq:stoch_domination} and Lemma \ref{lemma:attractiveness}, we see that
\begin{equation*}
    \expected_{\mu^N}[g(\eta_s(x))^\ell]\le \expected_{\bar\nu_N}[g(\eta_s(x))^\ell]=\expected_{\bar\nu_N}[g(\eta_0(x))^\ell]=E_{\bar\nu_N}\left[g(\eta(x))^\ell\right].
\end{equation*}
To conclude the proof of \eqref{eq:g_moment_1}, we recall that $E_{\bar\nu_N}\left[g(\eta(x))\right]=\bar\varphi_N(x)$.
As for the proof of \eqref{eq:g_moment_2}, we write
\begin{align*}
    E_{\bar\nu_N}\left[g(\eta(x))^2\right]  &= \frac{1}{Z(\bar\varphi_N(x))}\sum_{k=0}^\infty g(k)^2\frac{\bar\varphi_N(x)^k}{g(k)!}
    = \frac{\bar\varphi_N(x)}{Z(\bar\varphi_N(x))}\sum_{k=1}^\infty g(k)\frac{\bar\varphi_N(x)^{k-1}}{g(k-1)!}.
\end{align*}
By \eqref{eq:function_g}, we have that $g(k)\le g^* +g(k-1)$, and thus
\begin{align*}
    E_{\bar\nu_N}\left[g(\eta(x))^2\right]  & \le   g^*\bar\varphi_N(x) + \frac{\bar\varphi_N(x)}{Z(\bar\varphi_N(x))}\sum_{k=1}^\infty g(k-1)\frac{\bar\varphi_N(x)^{k-1}}{g(k-1)!}
    \\& = g^*\bar\varphi_N(x) + \frac{\bar\varphi_N(x)^2}{Z(\bar\varphi_N(x))}\sum_{k=2}^\infty \frac{\bar\varphi_N(x)^{k-2}}{g(k-2)!}
    \\& =  g^*\bar\varphi_N(x)+\bar\varphi_N(x)^2.
\end{align*}
Since $\bar\varphi_N(x)$ is uniformly bounded, the proof is concluded.
\end{proof}

We are now ready to show \eqref{eq:c2.1.2}, \eqref{eq:c2.1.3}, \eqref{eq:c2.1.4} and \eqref{eq:c2.1.5}. Since $G$ is of class $C^2 $, the integrals in \eqref{eq:c2.1.2} and \eqref{eq:c2.1.3} are bounded from above by
\begin{equation*}
    C(g^*,G)\int_{\tau}^{\tau+\omega}g(\eta_s(x))\de s,
\end{equation*}
where $x=1$ or $x=N-1$. For all $x\in I_N$, we have that
\begin{equation*}
    \prob_{\mu^N}\left\{\int_\tau^{\tau+\omega}g(\eta_s(x))\de s>\delta\right\}\le \frac{1}{\delta}\expected_{\mu^N}\left[\int_\tau^{\tau+\omega}g(\eta_s(x))\de s\right].
\end{equation*}
By the Cauchy-Schwarz inequality,
\begin{align*}
    \expected_{\mu^N}\left[\int_{\tau}^{\tau+\omega}g(\eta_s(x))\de s\right] & =  \expected_{\mu^N}\left[\int_0^T \boldsymbol{1}_{[\tau,\tau+\omega]}(s)g(\eta_s(x))\de s\right]
    \\& \le \sqrt{\omega} \left\{\expected_{\mu^N}\left[\int_0^T g(\eta_s(x))^2\de s\right]\right\}^{1/2}
    \\& = \sqrt{\omega} \left\{\int_0^T \expected_{\mu^N}\left[g(\eta_s(x))^2\right]\de s\right\}^{1/2}.
\end{align*}
Then, using Lemma \ref{lemma:g_moments}, we obtain 
\begin{equation}\label{eq:cauchy_schwarz}
    \expected_{\mu^N}\left[\int_{\tau}^{\tau+\omega}g(\eta_s(x))\de s\right]\le \{\omega TC\}^{1/2}.
\end{equation}
Sending $\omega\to0$, we conclude that \eqref{eq:c2.1.2} and \eqref{eq:c2.1.3} hold. Since $\theta\ge1$, \eqref{eq:c2.1.4} and \eqref{eq:c2.1.5} follow from the same argument.

Finally, we show \textsc{Condition 2.2}. Using Chebychev's inequality and the explicit formula for the quadratic variation given in \eqref{eq:quadratic_variation}, we have that
\begin{align}
    \prob_{\mu^N}\left\{\left|M^N(G)_{\tau+\omega}-M^N(G)_{\tau}\right|>\delta\right\} & \le  \frac{1}{\delta^2}\expected_{\mu^N}\left[(M^N(G)_{\tau+\omega}-M^N(G)_{\tau})^2\right]\nonumber
    \\& = \frac{1}{\delta^2} \expected_{\mu^N}\left[\int_\tau^{\tau+\omega}B^N(G)_s\de s\right], \label{eq:BN_bound}
\end{align}
where $B^N(G)_s$ was defined in \eqref{eq:BN}. Using the fact that both $G$ and its derivative are bounded, and well as \eqref{eq:cauchy_schwarz}, we can see that  \eqref{eq:BN_bound} is bounded from above by $\frac{C\omega}{N}$, where $C$ is a constant that does not depend on $N$ and $\omega$. This completes the proof.

\subsection{Limit Points are Concentrated on Absolutely Continuous Measures}\label{sec:abs_cont}
We now show that any limit point of the sequence of measures $\{Q^N\}_N$ is concentrated on paths which are absolutely continuous with respect to the Lebesgue measure. The lemma that follows is very similar to \cite[Lemma 7.4]{fmn21}; however, since our model is a generalisation of the one considered therein, for the sake of completeness we include a full proof.

\begin{lemma}\label{lemma:concentration_lebesgue} Let $\{\mu^N\}_N$ be a sequence of probability measures  satisfying \eqref{eq:stoch_domination}. Denote by $R_{\mu^N}$ the probability measure induced by $\pi^N$ on the space $\mathcal{M}_+$  of finite, positive measures on $[0,1]$ endowed with the weak topology. Then, any limit point $R^*$ of the sequence $\{R_{\mu^N}\}_N$ is concentrated on measures which are absolutely continuous with respect to the Lebesgue measure, namely
\begin{equation*}
    R^*\left\{\pi: \pi(\de u)=\rho(u)\de u\right\}=1.
\end{equation*}
\end{lemma}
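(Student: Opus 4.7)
The plan is to combine the attractiveness established in Lemma \ref{lemma:attractiveness} with the stochastic domination $\mu^N \le \bar\nu_N$ to dominate $\pi^N$ pathwise by the empirical measure of a process started in the stationary state $\bar\nu_N$, and then identify this dominating measure via a law of large numbers. First, using \cite[Theorem II.2.4]{lig05} together with the basic coupling of Lemma \ref{lemma:attractiveness}, I would construct a coupling $(\eta_t, \bar\eta_t)_{t\ge 0}$ in which both coordinates evolve under $\mc{L}^N$, $\eta_0 \sim \mu^N$, $\bar\eta_0 \sim \bar\nu_N$, and $\eta_t(x) \le \bar\eta_t(x)$ for every $x \in I_N$ and $t \ge 0$ almost surely. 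Invariance of $\bar\nu_N$ (Lemma \ref{lemma:inv_measure}) gives $\bar\eta_t \sim \bar\nu_N$ for every $t$, so setting $\bar\pi^N_t := N^{-1}\sum_{x \in I_N} \bar\eta_t(x)\,\delta_{x/N}$, the coupling yields the pathwise bound $\langle \pi^N_t, G\rangle \le \langle \bar\pi^N_t, G\rangle$ for every non-negative continuous $G$, almost surely.

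Next, I would establish a law of large numbers for $\bar\pi^N_t$. Under $\bar\nu_N$ the occupations $\{\bar\eta_t(x)\}_{x \in I_N}$ are independent, with mean $R(\bar\varphi_N(x))$ and variances uniformly bounded in $x$ and $N$, since the fugacity profile $\bar\varphi_N$ stays strictly below $\varphi^*$ (a standard computation, in the spirit of Lemma \ref{lemma:g_moments}, using \eqref{eq:function_g} and the uniform boundedness of $\bar\varphi_N$). Chebyshev's inequality therefore gives
\begin{equation*}
    \langle \bar\pi^N_t, G\rangle - \frac{1}{N}\sum_{x \in I_N} G\!\left(\frac{x}{N}\right) R(\bar\varphi_N(x)) \convP 0,
\end{equation*}
and combining this with the asymptotics $\bar\varphi_N(\lfloor u N\rfloor) \to \bar\varphi_\theta(u)$ recalled in Remark \ref{remark:limit_fugacity_density} and the continuity of $R$, a Riemann sum argument gives
\begin{equation*}
    \langle \bar\pi^N_t, G\rangle \convP \int_0^1 G(u)\, R(\bar\varphi_\theta(u))\,\de u.
\end{equation*}

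To conclude, fix a countable family $\{G_k\}_{k\ge 1} \subset C([0,1];[0,\infty))$ dense in the uniform topology. Along any subsequence $(N_j)$ realising a weak limit $\pi$ of $\pi^{N_j}$, Skorokhod's representation theorem realises $(\pi^{N_j}, \bar\pi^{N_j})$ as converging almost surely on a common probability space to $(\pi, R(\bar\varphi_\theta)\,\de u)$; joint almost-sure convergence is automatic because the second limit is deterministic. The domination inequality then passes to the limit for each $G_k$, yielding $\langle \pi, G_k\rangle \le \int_0^1 G_k(u) R(\bar\varphi_\theta(u))\,\de u$ almost surely for every $k$. A density argument extends this to every non-negative continuous $G$, and the Riesz representation theorem forces $\pi(\de u) \le R(\bar\varphi_\theta(u))\,\de u$ as measures. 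Since $R(\bar\varphi_\theta)$ is bounded, $\pi$ is absolutely continuous with respect to Lebesgue measure, proving the claim. The main obstacle is arranging the simultaneous a.s.\ convergence of $\pi^{N_j}$ and $\bar\pi^{N_j}$ on a common probability space, which the deterministic nature of the dominating limit makes routine via Skorokhod.
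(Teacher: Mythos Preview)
Your proof is correct and follows the same core strategy as the paper: couple $\mu^N$ below $\bar\nu_N$, use a law of large numbers for the dominating empirical measure, and pass the inequality $\langle\pi,G\rangle\le\int G\,\bar\rho_\theta$ to the limit for non-negative $G$. Two minor points of comparison: (i) invoking the dynamical coupling from Lemma~\ref{lemma:attractiveness} is unnecessary here, since the statement is purely static and the Strassen coupling from $\mu^N\le\bar\nu_N$ (via \cite[Theorem II.2.4]{lig05}) already gives $\eta\le\xi$ with $\eta\sim\mu^N$, $\xi\sim\bar\nu_N$; (ii) the paper avoids Skorokhod by working directly with Portmanteau on the closed set $\{\langle\pi,G\rangle\le\int G(\bar\rho_\theta+\eps)\}$ and then using that $\{\bar\nu_N\}_N$ is associated to $\bar\rho_\theta$, which is a little more streamlined than your joint-convergence argument but equivalent in substance.
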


Before proving this result, we highlight that it indeed implies concentration of limit points of $\{Q^N\}_N$ on paths which are absolutely continuous with respect to the Lebesgue measure. Indeed, under \textsc{Assumption (ND)} and \eqref{eq:stoch_domination}, by Lemma \ref{lemma:attractiveness} we have that $\mu^NS^N(t)\le \bar\nu_NS^N(t)=\bar\nu_N$ for each $t\in [0, T]$, where $S^N$ denotes the semigroup associated to $N^2\mathcal{L}^N$. Hence, by Lemma \ref{lemma:concentration_lebesgue} applied to $\mu^NS^N(t)$, we get that any limit point $Q^*$ of $\{Q^N\}_N$ satisfies
\begin{equation*}
    Q^*\left\{\pi: \pi_t(\de u)=\rho_t(u)\de u\right\}=1
\end{equation*}
for all $t\in [0, T]$. A straightforward application of Fubini's lemma
\begin{equation*}
    Q^*\left\{\pi: \pi_t(\de u)=\rho_t(u)\de u\ \ \text{for all} \ t\in [0, T]\right\}=1,
\end{equation*}
as required.

\begin{proof}[Proof of Lemma \ref{lemma:concentration_lebesgue}] It suffices  to show that there exists $\eps>0$ such that, for any continuous function $G:[0, 1]\to[0, \infty)$,
\begin{equation*}
    R^*\left\{\pi: \langle \pi, G\rangle \le \int_0^1 G(u)[\bar\rho_\theta(u)+\eps]\de u\right\}=1,
\end{equation*}
where $\bar\rho_\theta$ is the limit density profile given in Remark \ref{remark:limit_fugacity_density}. Let $\{R_{\mu^{N_k}}\}_k$ denote a convergent subsequence of $\{R_{\mu^N}\}_N$: then
\begin{equation}\label{eq:measure_inequality}
    \begin{split}
    &R^*\left\{\pi: \langle \pi, G\rangle \le \int_0^1 G(u)[\bar\rho_\theta(u)+\eps]\de u\right\}
    \\&\ge \mylimsup_{k\to\infty} R_{\mu^{N_k}}\left\{\pi: \langle \pi, G\rangle \le \int_0^1 G(u)[\bar\rho_\theta(u)+\eps]\de u\right\}
    \\&= \mylimsup_{k\to\infty} \mu^{N_k}\left\{\eta: \langle \pi^N(\eta), G\rangle \le \int_0^1 G(u)[\bar\rho_\theta(u)+\eps]\de u\right\}.
    \end{split}
\end{equation}
From \eqref{eq:stoch_domination} and \cite[Theorem II.2.4]{lig05}, there exists a coupling $\tilde{\mu}^N$ of the two measures $\mu^N$ and $\bar\nu_N$ on $\Omega_N\times \Omega_N$ such that $\tilde\mu^N\left\{(\eta, \xi): \eta\le\xi\right\}=1$, and hence, since $G$ is non negative,
\begin{equation*}
    \tilde\mu^N\left\{(\eta, \xi): \langle\pi^N(\eta), G\rangle \le \langle \pi^N(\xi), G\rangle \right\}=1.
\end{equation*}
By \eqref{eq:measure_inequality}, this implies that
\begin{align*}
    &R^*\left\{\pi: \langle \pi, G\rangle \le \int_0^1 G(u)[\bar\rho_\theta(u)+\eps]\de u\right\}
    \\&\ge \mylimsup_{k\to\infty} \bar\nu_{N_k}\left\{\eta: \langle \pi^N(\eta), G\rangle \le \int_0^1 G(u)[\bar\rho_\theta(u)+\eps]\de u\right\}=1,   
\end{align*}
where we used the fact that the sequence $\{\bar\nu_N\}_N$ is associated to the profile $\bar\rho_\theta$ in the sense of Definition \ref{def:associated_profile}.\end{proof}

\subsection{Characterisation of Limit Points}\label{sec:characterisation}
In this section we finally characterise the limit point of the sequence $\{Q^N\}_N$, whose existence is guaranteed by Proposition \ref{prop:tightness}.
We first note that, from the results of Section \ref{sec:abs_cont}, we know that any limit point $Q^*$ is supported on measures $\pi_t(\de u)$ which are absolutely continuous with respect to the Lebesgue measure, that is, $\pi_t(\de u)=\rho_t(u)\de u$. We start by identifying the density as solving item iii) of Definition \ref{def:weak_solutions}.

\subsubsection{The Density Solves Item iii) of Definition \ref{def:weak_solutions}}
From \cite[Lemma A1.5.10]{kl99}, we note that for any $G\in C^{1,2}([0,T]\times [0,1])$, the process $\{M^N_t(G), t\in[0, T]\}$ defined via
\begin{equation}
    \begin{split}
    M^N_t&(G):=\langle \pi^N_t, G_t \rangle-\langle \pi^N_0, G_0\rangle - \int_0^t\langle \pi_s^N, \partial_s G_s\rangle \de s 
    \\&- \int_0^t \frac{1}{N} \sum_{x=2}^{N-2}g(\eta_s(x))\Delta_N G_s\left(\frac{x}{N}\right) \de s
    \\&-\int_0^t \left\{g(\eta_s(1))\nabla^+_N G\left(\frac{1}{N}\right) -  g(\eta_s(N-1))\nabla^-_N G_s\left(\frac{N-1}{N}\right)\right\}\de s
    \\&-\kappa\int_0^t \left\{ \left(\frac{\alpha -\lambda g(\eta_s(1))}{N^{\theta-1}}\right) G_s\left(\frac{1}{N}\right) + \left(\frac{ \beta-\delta g(\eta_s(N-1))}{N^{\theta - 1}}\right) G_s\left(\frac{N-1}{N}\right)\right\}\de s
    \end{split}
\end{equation}
is a martingale with respect to the natural filtration of the process, where $\Delta_N, \nabla_N^+$ and $\nabla_N^-$ are defined in (\ref{eq:discrete_operators}). Since $G \in C^2([0,1])$, we can rewrite $M^N_t(G)$ as
\begin{equation*}
    \begin{split}
    M^N_t(G)=&\langle \pi^N_t, G_t \rangle-\langle \pi^N_0, G_0\rangle - \int_0^t\langle \pi_s^N, \partial_s G_s\rangle \de s 
    \\&-\int_0^t \frac{1}{N} \sum_{x=2}^{N-2}g(\eta_s(x))\Delta G_s\left(\frac{x}{N}\right) \de s
    \\&-\int_0^t\left\{g(\eta_s(1))\partial_u G_s(0) -  g(\eta_s(N-1))\partial_u G_s(1)\right\}\de s
    \\&-\kappa\int_0^t \left\{ \left(\frac{\alpha-\lambda g(\eta_s(1))}{N^{\theta-1}}\right) G_s(0) + \left(\frac{\beta-\delta g(\eta_s(N-1))}{N^{\theta-1}}\right) G_s(1)\right\}\de s 
    \\&+\mathscr{R}^{1,\theta}_N(G,t),
    \end{split}
\end{equation*}
where $\expected_{\mu^N}[|\mathscr{R}^{1,\theta}_N(G,t)|]$ goes to zero as $N \to \infty$ uniformly in $t \in [0,T]$. This is a consequence of the use of Taylor's expansion, which allows to replace the discrete operators by the respective continuous ones. 

\vspace{1em}
\ \textbf{The Bulk Terms.} For $\eps>0$, let
\begin{equation}\label{eq:bulk_eps}
    I_N^\eps:= \{1+\eps N, \ldots, N-1-\eps N \},
\end{equation}
where $\eps N$ must be understood as $\lfloor \eps N\rfloor$. Note that the bulk terms can be rewritten as
\begin{equation}\label{eq:bulk_1}
    \int_0^t \frac{1}{N}\sum_{x\in I^\eps_N}g(\eta_s(x))\Delta G_s\left(\frac{x}{N}\right) \de s + \mathscr{R}^2_{N, \eps}(G, t),
\end{equation}
where
\begin{equation*}
     \mathscr{R}^2_{N, \eps}(G, t) := \int_0^t\frac{1}{N} \Big\{\sum_{x =2}^{\eps N}g(\eta_s(x))\Delta G_s\left(\frac{x}{N}\right) + \sum_{N-\eps N}^{N-2}g(\eta_s(x))\Delta G_s\left(\frac{x}{N}\right)\Big\}\de s.
\end{equation*}
By Lemma \ref{lemma:g_moments},
$
    \expected_{\mu^N}\left[\left|\mathscr{R}^2_{N, \eps}(G,t)\right|\right]
    \lesssim  \eps  T \|\Delta G\|_{\infty},
$ and hence $\expected_{\mu^N}[|\mathscr{R}^2_{N, \eps}(G,t)|]\to 0$ as $N\to\infty $ and $\eps\to0$. 
We now sum and subtract the expression 
\begin{equation}
    \frac{1}{2\eps N +1}\sum_{|y-x|\le \eps N}g(\eta_s(y))
\end{equation}
to the leftmost term of \eqref{eq:bulk_1}, which we rewrite as 
\begin{equation}\label{eq:bulk_2}
    \int_0^t \frac{1}{N} \sum_{x\in I^\eps_N}\Delta G_s\left(\frac{x}{N}\right)\frac{1}{2\eps N+1}\sum_{|y-x|\le\eps N}g(\eta_s(y)) \de s + \mathscr{R}^3_{N, \eps}(G,t),
\end{equation}
where 
\begin{equation*}
    \mathscr{R}^3_{N, \eps}(G,t):= \int_0^t \frac{1}{N} \sum_{x\in I^\eps_N}\Big\{g(\eta_s(x))-\frac{1}{2\eps N+1}\sum_{|y-x|\le\eps N}g(\eta_s(y))\Big\}\Delta G_s\left(\frac{x}{N}\right) \de s.
\end{equation*}
A simple computation based on a summation by parts shows that 
$\expected_{\mu^N}[|\mathscr{R}^3_{N, \eps}(G,t)|]$ can be bounded from above by
\begin{equation*}
\frac{1}{N(2\eps N+1)}\sum_{x\in I^\eps_N}\int_0^t \sum_{|y-x| \le \eps N}\expected_{\mu^N}[g(\eta_s(y))]\left|\Delta G_s\left(\frac{x}{N}\right)-\Delta G_s\left(\frac{y}{N}\right)\right|\de s,\end{equation*}
which vanishes as $N\to\infty$ and $\eps\to0$ since $\expected_{\mu^N}[g(\eta_s(x))]\le C$ and because of the continuity of $\Delta G_s$. Hence, we are only left to handle the integral term in \eqref{eq:bulk_2}. To that end, we introduce the {centred} empirical average $\bar\eta^{\varepsilon N}_s(x)$ in the box of size $2\eps N+1$ around $x$, namely
\begin{equation}\label{eq:bar_eta}
    \bar\eta^{\eps N}_s(x):=\frac{1}{2\eps N+1}\sum_{|y-x|\le \eps N}\eta_s(y)
\end{equation}
for each $x \in I^\eps_N$. Recalling the definition of the function $\Phi:[0, \infty)\to [0,\varphi^*)$ in \eqref{eq:Phi}, we write \eqref{eq:bulk_2} as
\begin{equation}\label{eq:bulk_3}
    \int_0^t \frac{1}{N} \sum_{x\in I^\eps_N}\Delta G_s\left(\frac{x}{N}\right)\Phi(\bar\eta^{\eps N}_s(x)) \de s + \mathscr{R}^4_{N, \varepsilon}(G,t),
\end{equation}
where 
\begin{equation*}
    \mathscr{R}^4_{N, \varepsilon}(G,t):= \int_0^t \frac{1}{N}\sum_{x\in I^\eps_N}\Delta G_s\Big(\frac{x}{N}\Big)\Big\{\frac{1}{2\eps N+1}\sum_{|y-x|\le\eps N}g(\eta_s(y))-\Phi(\bar\eta^{\eps N}_s(x))\Big\} \de s.
\end{equation*}
By the bulk replacement lemma (Lemma \ref{lemma:repl_bulk}, stated and proved in the section that follows), $\expected_{\mu^N}[|\mathscr{R}^4_{N, \eps}(G,t)|]$ vanishes as $N\to\infty$ and $\eps\to 0$, and we are therefore able to rewrite the martingale $M^N_t(G)$ as
\begin{equation}\label{eq:martingale_boundaries}
    \begin{split}
  \langle \pi^N_t, G_t \rangle&-\langle \pi^N_0, G_0\rangle - \int_0^t\langle \pi_s^N, \partial_s G_s\rangle \de s 
    -\int_0^t \frac{1}{N} \sum_{x\in I_N^\eps}\Phi(\bar\eta^{\eps N}_s(x))\Delta G_s\left(\frac{x}{N}\right) \de s
    \\&-\int_0^t \left\{g(\eta_s(1))\partial_u G_s(0) -  g(\eta_s(N-1))\partial_u G_s(1)\right\}\de s
    \\&-\kappa\int_0^t \left\{\left(\frac{\alpha -\lambda g(\eta_s(1))}{N^{\theta-1}}\right) G_s(0) + \left(\frac{ \beta-\delta g(\eta_s(N-1))}{N^{\theta - 1}}\right) G_s(1)\right\}\de s
    \end{split}
\end{equation}
up to a term which vanishes as $N\to\infty$ and $\eps\to0$.

\vspace{1em}
\textbf{The Boundary Terms.} We are now left to treat the boundary terms. First, we sum and subtract the expressions
\begin{equation*}
    \frac{1}{\eps N}\sum_{y=1}^{\eps N}g(\eta_s(y))
\quad \textrm{and}\quad 
    \frac{1}{\eps N}\sum_{y=N-\eps N}^{N-1}g(\eta_s(y)),
\end{equation*}
with appropriate coefficients, to the boundary terms of \eqref{eq:martingale_boundaries}, which we rewrite as
\begin{equation}
   \begin{split}\label{eq:martingale_boundaries_2}
    &-\int_0^t \Big\{\frac{1}{\eps N}\sum_{y=1}^{\eps N}g(\eta_s(y))\partial_u G_s(0) -  \frac{1}{\eps N }\sum_{y=N-\eps N}^{N-1}g(\eta_s(y))\partial_u G_s(1)\Big\}\de s
    \\&-\kappa\int_0^t \Big(\frac{\alpha -\lambda \frac{1}{\eps N}\sum_{y=1}^{\eps N}g(\eta_s(y))}{N^{\theta-1}}\Big) G_s(0)\de s
    \\&-\kappa \int_0^t \Big(\frac{ \beta-\delta \frac{1}{\eps N }\sum_{y=N-\eps N}^{N-1}g(\eta_s(y))}{N^{\theta - 1}}\Big) G_s(1)+\mathscr{R}_{N, \eps}^{5, \theta}(G, t),
   \end{split}
\end{equation}
with
\begin{align*}
    \mathscr{R}_{N, \eps}^{5, \theta}(G, t):=&-\int_0^t \Big\{g(\eta_s(1))-\frac{1}{\eps N}\sum_{y=1}^{\eps N}g(\eta_s(y))\Big\}\partial_u G_s(0)\de s
    \\&+  \int_0^t \Big\{g(\eta_s(N-1))-\frac{1}{\eps N }\sum_{y=N-\eps N}^{N-1}g(\eta_s(y))\Big\}\partial_u G_s(1)\de s
    \\&+\kappa\int_0^t \frac{\lambda\left\{g(\eta_s(1))- \frac{1}{\eps N}\sum_{y=1}^{\eps N}g(\eta_s(y))\right\}}{N^{\theta-1}} G_s(0)\de s
    \\&+\kappa \int_0^t \frac{\delta\left\{g(\eta_s(N-1)- \frac{1}{\eps N }\sum_{y=N-\eps N}^{N-1}g(\eta_s(y))\right\}}{N^{\theta - 1}} G_s(1)\de s.
\end{align*}
Since $\theta\ge1$, by the same argument given for $\mathscr{R}_{N, \eps}^3(G, t)$, we have that $\expected_{\mu^N}[|\mathscr{R}^{5, \theta}_{N, \eps}(G, t)|]\to0$ as $N\to\infty$ and $\eps\to0$. Let now $\vec\eta^{\eps N}_s(x)$ and $\cev\eta^{\eps N}_s(x)$ denote the {directed}  density averages in the box of size $\eps N$ to the right and left, respectively, of $x\in\{1, \ldots, N-1-\eps N\}$ and $x\in\{\eps N+1, \ldots, N-1\}$, respectively, namely
\begin{equation}\label{eq:vec_eta}
    \vec\eta^{\eps N}_s(x):=\frac{1}{\eps N}\sum_{y=x+1}^{x+\eps N}\eta_s(y), \ \ \ \ \cev\eta^{\eps N}_s(x):=\frac{1}{\eps N}\sum_{y=x-\eps N}^{x-1}\eta_s(y).
\end{equation}
Note that, for $\theta>1$, the two bottom lines of \eqref{eq:martingale_boundaries_2} vanish as $N\to\infty$: hence, up to a term vanishing in the limit, we can rewrite \eqref{eq:martingale_boundaries_2} as
\begin{equation*}
    \begin{split}
    &-\int_0^t \left\{\Phi\big(\vec{\eta}^{\eps N}_s(1)\big)\partial_u G_s(0) - \Phi\big(\cev{\eta}^{\eps N}_s(N-1)\big)\partial_u G_s(1)\right\}\de s
    \\&-\kappa\boldsymbol{1}_{\{1\}}(\theta)\int_0^t \left\{\left(\alpha -\lambda \Phi\big(\vec{\eta}^{\eps N}_s(1)\big)\right) G_s(0) + \left(\beta-\delta \Phi\big(\cev{\eta}^{\eps N}_s(N-1)\big)\right) G_s(1)\right\}\de s
    \\&+\mathscr{R}_{N, \eps}^{6, \theta}(G, t),
    \end{split}
\end{equation*}
where 
\begin{equation*}
    \begin{split}
    \mathscr{R}_{N, \eps}^{6, \theta}(G, t)&:=-\int_0^t \Big\{\frac{1}{\eps N}\sum_{y=1}^{\eps N}g(\eta_s(y))-\Phi\big(\vec{\eta}^{\eps N}_s(1)\big)\Big\}\partial_u G_s(0)\de s
    \\&\phantom{:=}+\int_0^t \Big\{\frac{1}{\eps N }\sum_{y=N-\eps N}^{N-1}g(\eta_s(y))-\Phi\big(\cev{\eta}^{\eps N}_s(N-1)\big)\Big\}\partial_u G_s(1)\de s
    \\&\phantom{:=}+\kappa\boldsymbol{1}_{\{1\}}(\theta)\int_0^t \lambda \Big\{\frac{1}{\eps N}\sum_{y=1}^{\eps N}g(\eta_s(y))-\Phi\big(\vec{\eta}^{\eps N}_s(1)\big)\Big\}G_s(0)\de s
    \\&\phantom{:=}+\kappa\boldsymbol{1}_{\{1\}}(\theta)\int_0^t\delta \Big\{\frac{1}{\eps N }\sum_{y=N-\eps N}^{N-1}g(\eta_s(y))-\Phi\big(\cev{\eta}^{\eps N}_s(N-1)\big)\Big\} G_s(1)\de s.
    \end{split}
\end{equation*}
By the boundary replacement lemma (namely Lemma \ref{lemma:repl_boundaries} in Section \ref{sec:replacement_boundary}), we have that $\expected_{\mu^N}[|\mathscr{R}^{6, \theta}_{N, \eps}(G,t)|]$ vanishes as $N\to\infty$ and $\eps\to 0$.

\vspace{1em}
\textbf{Identification of the Density Solving \eqref{eq:intergal_pde}.} We are finally left with the martingale equation
\begin{align*}
    M^N_t&(G)=\langle \pi^N_t, G_t \rangle-\langle \pi^N_0, G_0\rangle - \int_0^t\langle \pi_s^N, \partial_s G_s\rangle \de s 
    \\&-\int_0^t \frac{1}{N} \sum_{x\in I_N^\eps}\Phi(\bar\eta^{\eps N}_s(x))\Delta G_s\left(\frac{x}{N}\right) \de s
    \\&-\int_0^t \left\{\Phi\big(\vec{\eta}^{\eps N}_s(1)\big)\partial_u G_s(0) -  \Phi\big(\cev{\eta}^{\eps N}_s(N-1)\big)\partial_u G_s(1)\right\}\de s
    \\&-\kappa\boldsymbol{1}_{\{1\}}(\theta)\int_0^t \left\{\left(\alpha -\lambda \Phi\big(\vec{\eta}^{\eps N}_s(1)\big)\right) G_s(0) + \left(\beta-\delta \Phi\big(\cev{\eta}^{\eps N}_s(N-1)\big)\right) G_s(1)\right\}\de s
\end{align*}
up to a term which vanishes in $L^1(\prob_{\mu^N})$ as $N\to\infty$ and $\eps\to 0.$ Calling $Q^*$ a limit point of the sequence of measures $\{Q^N\}_N$ -- whose existence is guaranteed by Proposition \ref{prop:tightness} -- and using Portmanteau's Theorem, it is not hard to see that
\begin{align*}
    Q^*\bigg\{&\pi: \pi_t(u)=\rho_t(u)\de u \ \text{and} \ \ \langle \rho_t , G_t\rangle - \langle \gamma, G_0 \rangle 
    \\&- \int_0^t \{\langle \rho_s, \partial_s G_s \rangle + \langle \Phi(\rho_s), \Delta G_s\rangle\}\de s
    \\ &+\int_0^t \{ \Phi(\rho_s(1))\partial_u G_s(1)-\Phi(\rho_s(0))\partial_u G_s(0) \}\de s
    \\&-\tilde\kappa\int_0^t (\beta- \delta\Phi(\rho_s(1)))  G_s(1)-(\lambda \Phi(\rho_s(0))-\alpha)G_s(0) \de s =0
    \\&\text{for all } t\in [0, T], \ \ G\in C^{1, 2}([0, T]\times [0, 1]) \bigg\}=1,
\end{align*}
where $\tilde\kappa:=\kappa\boldsymbol{1}_{\{1\}}(\theta)$. It remains to show that $Q^*$ is concentrated on paths whose density $\rho$ with respect to the Lebesgue measure satisfies $\rho\in L^2([0, T]\times[0, 1])$ and $\Phi(\rho)\in L^2([0, T], \mathcal{H}^1)$. We start with the former.

\subsubsection{The Density Solves Item i) of Definition \ref{def:weak_solutions}}
In order to show that $\rho\in L^2([0, T]\times[0, 1])$, we will need the next result, which will also be used repeatedly in the two sections that follow. This lemma can be viewed as a corollary of the proof of \cite[Lemma 4.3]{fmn21}, but for the sake of completeness we present its proof.

\begin{lemma}\label{lemma:bounded_moments}
Let $0<\varphi<\varphi^*$ and let $\bar{\nu}_N$ denote the invariant measure from Lemma \ref{lemma:inv_measure}: for each positive integer $\ell$, we have that
\begin{equation*}
    \mysup_{N}\mysup_{x\in I_N}E_{\bar{\nu}_\varphi}[(\eta(x))^\ell]<\infty
\quad \textrm{and}\quad
    \mysup_{N}\mysup_{x\in I_N}E_{\bar{\nu}_N}[(\eta(x))^\ell]<\infty.
\end{equation*}
\end{lemma}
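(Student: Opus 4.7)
The plan is to leverage the product structure of both measures and reduce matters to a one-variable power-series estimate. For any fugacity $\psi\in[0,\varphi^*)$, the marginal at site $x$ is given by \eqref{eq:prod_measure}, so
$$E_{\bar\nu_\psi}[\eta(x)^\ell] \;=\; \frac{1}{Z(\psi)}\sum_{k\ge 0}k^\ell\,\frac{\psi^k}{g(k)!}.$$
It therefore suffices to show that the function
$$M_\ell(\psi)\;:=\;\sum_{k\ge 0}k^\ell\,\frac{\psi^k}{g(k)!}$$
is finite on $[0,\varphi^*)$ and uniformly bounded on every subinterval $[0,\varphi^\dagger]$ with $\varphi^\dagger<\varphi^*$. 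I would establish this by fixing $\varphi^\dagger<\varphi^\ddagger<\varphi^*$ and noting that $k^\ell\psi^k\le k^\ell(\varphi^\dagger)^k\le C_\ell\,(\varphi^\ddagger)^k$ for some constant $C_\ell=C_\ell(\varphi^\dagger,\varphi^\ddagger)$ independent of $k$ and of $\psi\in[0,\varphi^\dagger]$, whence $M_\ell(\psi)\le C_\ell\,Z(\varphi^\ddagger)<\infty$. Since in addition $Z(\psi)\ge Z(0)=1$, the ratio $M_\ell(\psi)/Z(\psi)$ is bounded on $[0,\varphi^\dagger]$ as well.

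For the first inequality the conclusion is now immediate: $\psi=\varphi$ is a fixed value strictly less than $\varphi^*$ that does not depend on $N$ or $x$, so the resulting bound is independent of both indices. For the second inequality I would produce $\varphi^\dagger<\varphi^*$ with $\bar\varphi_N(x)\le\varphi^\dagger$ for every $N$ and every $x\in I_N$. From \eqref{eq:fugacity} the map $x\mapsto\bar\varphi_N(x)$ is affine on $I_N$, so its supremum is attained at $x=1$ or $x=N-1$, and these two boundary values are exactly the quantities appearing on the left-hand side of hypothesis \eqref{eq:params_condition}. As $N\to\infty$, the explicit formulas show that $\bar\varphi_N(1)$ and $\bar\varphi_N(N-1)$ converge to the finite numbers recorded in Remark \ref{remark:limit_fugacity_density}, which lie strictly below $\varphi^*$; combining this convergence with the bound \eqref{eq:params_condition} valid at each finite $N$ then gives the desired uniform $\varphi^\dagger$.

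The only genuinely non-trivial step is thus the uniform-in-$N$ bound on the fugacity profile; everything else amounts to a routine comparison of two convergent power series at radii $\varphi^\dagger<\varphi^\ddagger<\varphi^*$. Once that uniform bound is in hand, applying the abstract estimate on $M_\ell$ with $\psi=\bar\varphi_N(x)$ at every site $x\in I_N$ concludes the proof.
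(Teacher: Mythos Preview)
Your argument is correct and follows essentially the same route as the paper's proof: both reduce to the one-variable function $R_\ell(\psi)=M_\ell(\psi)/Z(\psi)$, show it is finite and bounded on compact subintervals of $[0,\varphi^*)$, and then invoke a uniform bound $\sup_{N,x}\bar\varphi_N(x)<\varphi^*$ coming from the affinity of $\bar\varphi_N$ and condition \eqref{eq:params_condition}. The only cosmetic difference is that the paper establishes finiteness of $R_\ell$ via the inductive identity $A_n(\varphi)=\varphi A_{n-1}'(\varphi)$ (hence analyticity on $[0,\varphi^*)$), whereas you use the direct power-series comparison $k^\ell(\varphi^\dagger)^k\le C_\ell(\varphi^\ddagger)^k$.
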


\begin{proof}  
Using the same ideas as in \eqref{eq:R}, for all $x\in I_N$ we have that $E_{\bar\nu_\varphi}[(\eta(x))^\ell]=R_\ell(\varphi)$ and $E_{\bar\nu_N}[(\eta(x))^\ell]=R_\ell(\bar\varphi_N(x))$, where $R_\ell:[0, \varphi*)\to[0, \infty)$ is given by
\begin{equation*}
    R_\ell(\varphi):=\frac{1}{Z(\varphi)}\sum_{k\ge0}k^\ell\frac{\varphi^k}{g(k)!},
\end{equation*}
with $\varphi^*$ being the radius of convergence of $Z$. Note that the function $R_\ell$ is analytic on $[0,\varphi^*)$: to see this, we write $R_\ell(\varphi)=\frac{A_\ell(\varphi)}{Z(\varphi)}$, where $A_\ell$ is defined inductively by 
\begin{equation*}
    \begin{cases}
    A_0(\varphi)=Z(\varphi),
    \\A_n(\varphi)=\varphi A'_{n-1}(\varphi).
    \end{cases}
\end{equation*}
Hence, we immediately get
\begin{equation*}
    \mysup_{N}\mysup_{x\in I_N}E_{\bar{\nu}_\varphi}[(\eta(x))^\ell]=R_\ell(\varphi)<\infty.
\end{equation*}
Let now $\varphi^{**}:=\mysup_{N}\mysup_{x\in I_N}\bar\varphi_N(x)$, where $\bar\varphi_N(x)$ was defined in \eqref{eq:fugacity}: by the condition \eqref{eq:params_condition}, we have that $\varphi^{**}\le \varphi^*$. Therefore, 
\begin{equation*}
    \mysup_{N}\mysup_{x\in I_N}E_{\bar{\nu}_N}[(\eta(x))^\ell]=\mysup_{N}\mysup_{x\in I_N}R_\ell(\bar\varphi_N(x))\le\mysup_{\varphi\in[0, \varphi^{*}]}R_\ell(\varphi)<\infty,
\end{equation*}
as claimed.
\end{proof}

\begin{lemma} \label{lemma:rho_L2} Under the hypotheses of Theorem \ref{thm:main}, any limit point $Q^*$ of $\{Q^N\}_N$ is concentrated on paths $\pi_t(\de u)=\rho_t(u)\de u$ such that
\begin{equation*}
    \int_0^T\int_0^1 \rho_t^2(u)\de u\de t<\infty
\end{equation*}
almost surely.
\end{lemma}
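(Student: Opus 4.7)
The plan is to bound a mollified $L^2$ norm of $\pi_t^N$ uniformly in $N$ using attractiveness together with the hypothesis $\mu^N\le\bar\nu_N$, and to pass to the limit via Fatou's lemma and the Lebesgue differentiation theorem.

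First, fix a smooth probability density $\iota:\mathbb{R}\to[0,\infty)$ supported in $[-1,1]$, set $\iota_\eps(v):=\eps^{-1}\iota(v/\eps)$, and introduce the mollified empirical density
\begin{equation*}
    \pi_t^{N,\eps}(u) := \langle \pi_t^N, \iota_\eps(u-\cdot)\rangle = \frac{1}{N}\sum_{x\in I_N}\eta_t(x)\,\iota_\eps(u-x/N), \quad u\in[\eps,1-\eps].
\end{equation*}
Since $\iota_\eps\ge 0$, the map $\eta\mapsto\bigl(\langle\pi^N(\eta),\iota_\eps(u-\cdot)\rangle\bigr)^2$ is monotone on $\Omega_N$. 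By attractiveness (Lemma \ref{lemma:attractiveness}), the assumption $\mu^N\le\bar\nu_N$, and the invariance of $\bar\nu_N$, I would conclude that
\begin{equation*}
    \expected_{\mu^N}\bigl[(\pi_t^{N,\eps}(u))^2\bigr] \le E_{\bar\nu_N}\bigl[(\pi^{N,\eps}(u))^2\bigr].
\end{equation*}
Since $\bar\nu_N$ is a product measure whose first two moments are uniformly bounded by Lemma \ref{lemma:bounded_moments}, expanding the square and controlling the diagonal and off-diagonal terms by $C/(N\eps)$ and by the square of the Riemann sum $\frac{1}{N}\sum_x\iota_\eps(u-x/N)$ (which tends to $\int\iota_\eps=1$) respectively, one obtains a bound uniform in $N$, $\eps$ and $u\in[\eps,1-\eps]$: $\expected_{\mu^N}[(\pi_t^{N,\eps}(u))^2]\le C$. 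Integrating over $t\in[0,T]$ and $u\in[\eps,1-\eps]$ gives
\begin{equation*}
    \expected_{\mu^N}\left[\int_0^T\int_\eps^{1-\eps}(\pi_t^{N,\eps}(u))^2\,\de u\,\de t\right]\le C\,T.
\end{equation*}

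Next, I would pass to the limit in $N$. The functional $F^\eps(\pi):=\int_\eps^{1-\eps}\langle\pi,\iota_\eps(u-\cdot)\rangle^2\,\de u$ is continuous on $\mathcal{M}_+$ in the weak topology, because $\iota_\eps$ is bounded continuous and $u\mapsto\langle\pi,\iota_\eps(u-\cdot)\rangle$ is continuous uniformly in $u$. Truncating $F^\eps$ at a level $M$ produces a bounded continuous functional on paths; Portmanteau's theorem applied along a subsequence $Q^{N_k}\to Q^*$, followed by monotone convergence as $M\to\infty$, yields
\begin{equation*}
    \expected_{Q^*}\left[\int_0^T\int_\eps^{1-\eps}(\rho_t^\eps(u))^2\,\de u\,\de t\right]\le C\,T,
\end{equation*}
where $\rho_t^\eps:=\rho_t\ast\iota_\eps$, and where I used that $Q^*$ is supported on paths with $\pi_t(\de u)=\rho_t(u)\de u$ by the results of Section \ref{sec:abs_cont}.

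Finally, since $\rho_t\in L^1([0,1])$ $Q^*$-almost surely, the Lebesgue differentiation theorem gives $\rho_t^\eps(u)\to\rho_t(u)$ for almost every $u\in(0,1)$ as $\eps\to 0$. Two successive applications of Fatou's lemma, first inside the spatial integral and then under $\expected_{Q^*}$, produce
\begin{equation*}
    \expected_{Q^*}\left[\int_0^T\int_0^1\rho_t(u)^2\,\de u\,\de t\right]\le C\,T,
\end{equation*}
which proves the claim. The main obstacle is the passage from $\expected_{\mu^N}$ to $\expected_{Q^*}$ for the unbounded functional $F^\eps$, which requires the truncation argument outlined above together with uniform continuity in $u$ of $\pi\mapsto\langle\pi,\iota_\eps(u-\cdot)\rangle$ on $\mathcal{M}_+$; everything else reduces to explicit product-measure computations and Fatou arguments.
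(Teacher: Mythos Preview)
Your proposal is correct and follows essentially the same strategy as the paper: both use monotonicity of the squared local average together with attractiveness and $\mu^N\le\bar\nu_N$ to reduce to a second-moment computation under the product measure $\bar\nu_N$, invoke Lemma~\ref{lemma:bounded_moments}, and conclude via Lebesgue differentiation. The only cosmetic differences are that the paper uses the discrete block averages $\bar\eta^{\eps N}$, $\vec\eta^{\eps N}$, $\cev\eta^{\eps N}$ (splitting the domain into bulk and boundary pieces) where you use a smooth mollifier restricted to $[\eps,1-\eps]$, and the paper is terser about the passage $N\to\infty$ and $\eps\to 0$ where you spell out the Portmanteau/truncation/Fatou steps.
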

\begin{proof}
Note that the map 
\begin{equation*}
    \eta\mapsto \sum_{x=1}^{\eps N} \big(\vec\eta^{\eps N}(x)\big)^2+\sum_{x\in I_N^\eps}\big(\bar\eta^{\eps N}(x)\big)^2+\sum_{x=N-\eps N}^{N-1}\big(\cev\eta^{\eps N}(x)\big)^2=:H(\eps N, \eta)
\end{equation*}
is non decreasing, where $I_N^\eps$ is defined in \eqref{eq:bulk_eps}, and $\vec\eta^{\eps N}(x), \bar\eta^{\eps N}(x)$ and $\cev\eta^{\eps N}(x)$ are defined in \eqref{eq:bar_eta} and \eqref{eq:vec_eta}. Hence, by \textsc{Assumption (ND)} and attractiveness, we have that
\begin{align*}
    \expected_{\mu^N}\left[\int_0^T \frac{1}{N} H(\eps N, \eta_s) \de s \right]&\le \expected_{\bar\nu_N}\left[\int_0^T \frac{1}{N} H(\eps N, \eta_s) \de s \right]
    = T E_{\bar\nu_N}\left[\frac{1}{N} H(\eps N, \eta)\right] 
    \\&\lesssim T\mysup_{x\in I_N} E_{\bar\nu_N}\left[\eta(x)^2\right].
\end{align*}
By Lemma \ref{lemma:bounded_moments}, this implies that
\begin{equation*}
    \mylimsup_{\eps\to 0}\mylimsup_{N\to\infty}\expected_{\mu^N}\left[\int_0^T \frac{1}{N} H(\eps N, \eta_s) \de s \right]\le C 
\end{equation*}
for some finite constant $C$. But now, define the function $\vec i_\eps$ on $[0, 1]$ as a continuous approximation of the map $\frac{1}{\eps}\boldsymbol{1}_{(0, \eps]}$ in such a way that $\mylim_{\eps\to0}\|\frac{1}{\eps}\boldsymbol{1}_{(0, \eps]}-\vec i_\eps\|_{L^1([0, 1])}=0$. Then, we can write $\vec\eta_s^{\eps N}(x)=\langle\pi_s^N, \vec i_\eps(\cdot-\frac{x}{N})\rangle$ up to an error which vanishes in $L^1$ as $\eps\to0$. By writing $\bar\eta_s^{\eps N}(x)$ and $\cev\eta_s^{\eps N}(x)$ in a similar way and applying Lebesgue's differentiation theorem, it is easy to conclude. \end{proof}

\subsubsection{The Density Solves Item ii) of Definition \ref{def:weak_solutions}}
Finally, to show that $\Phi(\rho)\in L^2([0, T], \mathcal{H}^1)$, we observe that, by a standard argument, it suffices to show the following result, whose proof is postponed to Appendix \ref{sec:app_uniqueness}. Define the inner product $\llangle \cdot, \cdot \rrangle$ on the set $C^{0, 1}([0, T]\times(0, 1))$ via
\begin{equation*}
    \llangle G, H \rrangle:=\int_0^T\int_0^1 G_s(u) H_s(u)\de u\de s.
\end{equation*}
\begin{proposition}[Energy Estimate]\label{prop:energy_estimate}
Let $Q^*$ be a limit point of the sequence of measures $\{Q^N\}_N$. There exist positive constants $K_0, c<\infty$ such that
\begin{equation}
    \expected_{Q^*}\left[\mysup_{H\in C_c^{0, 1}([0, T]\times(0, 1))}\left\{\llangle \Phi(\rho), \partial_u H\rrangle -c\llangle \Phi(\rho), H^2\rrangle \right\} \right]\le K_0,
\end{equation}
where $C_c^{0, 1}([0, T]\times(0, 1))$ denotes the set of $C^{0, 1}$ functions on $[0, T]\times(0, 1)$ with compact support.
\end{proposition}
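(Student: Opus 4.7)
The plan is to adapt the classical entropy-method energy estimate \cite[Lemma 7.6.4]{kl99} to the boundary-driven setting, organising the argument in three stages: microscopic reduction via the replacement lemma, an entropy inequality combined with Feynman-Kac, and passage to the supremum through a countable dense family. For each $H\in C_c^{0,1}([0,T]\times(0,1))$, I would introduce the microscopic functional
$$X_H^N:=\int_0^T\left\{\frac{1}{N}\sum_{x\in I_N}g(\eta_s(x))\partial_u H_s\left(\tfrac{x}{N}\right)-\frac{c}{N}\sum_{x\in I_N}g(\eta_s(x))H_s\left(\tfrac{x}{N}\right)^2\right\}\de s.$$
After invoking the bulk replacement lemma (Lemma \ref{lemma:repl_bulk}) and Lemma \ref{lemma:Phi_lips}, the local averages of $g(\eta(\cdot))$ become $\Phi$ applied to local empirical densities, so $X_H^N$ reduces to a continuous functional of $\pi^N$ up to an $L^1(\prob_{\mu^N})$-vanishing correction; Portmanteau then identifies the macroscopic limit with $\expected_{Q^*}[\llangle\Phi(\rho),\partial_u H\rrangle-c\llangle\Phi(\rho),H^2\rrangle]$. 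The compact support of $H$ in $(0,1)$ removes boundary contributions.

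I would then fix a countable dense family $\{H_k\}_{k\ge1}\subset C_c^{0,1}([0,T]\times(0,1))$. Combining $\exp(\mymax_{j\le n}a_j)\le\sum_{j\le n}\exp(a_j)$ with the entropy inequality from the Appendix (applied to the path measure with reference $\prob_{\bar\nu_N}$), and using the hypothesis $H(\mu^N|\bar\nu_N)\lesssim N$,
$$\expected_{\mu^N}\left[\mymax_{j\le n}X_{H_j}^N\right]\le\frac{H(\mu^N|\bar\nu_N)+\log n}{N}+\frac{1}{N}\mymax_{j\le n}\log\expected_{\bar\nu_N}\left[e^{NX_{H_j}^N}\right].$$
Since the process is stationary under $\prob_{\bar\nu_N}$ by Lemma \ref{lemma:inv_measure}, Feynman-Kac for the accelerated generator $N^2\mc{L}^N$ yields
$$\frac{1}{N}\log\expected_{\bar\nu_N}\left[e^{NX_H^N}\right]\le\int_0^T\mysup_f\left\{\langle V_{H,s},f^2\rangle_{\bar\nu_N}-N\langle(-\mc{L}^N)f,f\rangle_{\bar\nu_N}\right\}\de s,$$
where $V_{H,s}$ is the time-$s$ integrand of $X_H^N$ and the supremum ranges over $L^2(\bar\nu_N)$-densities $f$.

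The core computation is a summation-by-parts plus Young argument: the gradient piece of $V_{H,s}$ rewrites (using the compact support of $H$) as $-\sum_x H_s(\tfrac{x}{N})[g(\eta(x+1))-g(\eta(x))]/N$, which pairs against $f(\eta^{x,x+1})-f(\eta)$ via the zero-range integration-by-parts identity for $\bar\nu_N$; Young's inequality then absorbs one half of the resulting cross term into $N\langle(-\mc{L}^N)f,f\rangle_{\bar\nu_N}$ and the other into the quadratic contribution $c\langle\frac{1}{N}\sum_x g(\eta(x))H_s^2,f^2\rangle_{\bar\nu_N}$, provided $c$ is chosen large enough. Since Lemma \ref{lemma:bounded_moments} gives uniform bounds on all $g$-moments, the variational supremum is finite uniformly in $N$. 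Finally, Portmanteau applied to the continuous functional $\mymax_{j\le n}\{\llangle\Phi(\rho),\partial_u H_j\rrangle-c\llangle\Phi(\rho),H_j^2\rrangle\}$ transfers the bound to $Q^*$, and monotone convergence as $n\to\infty$ recovers the full supremum over $C_c^{0,1}([0,T]\times(0,1))$.

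The main obstacle is the Young step: because $\bar\nu_N$ has a spatially varying fugacity $\bar\varphi_N(x)$, the integration-by-parts identity for $\int g(\eta(x))F(\eta)\de\bar\nu_N$ produces Radon-Nikodym ratios $\bar\varphi_N(x+1)/\bar\varphi_N(x)$ that must be controlled uniformly in $x$ and $N$ through \eqref{eq:params_condition}. One also needs to justify invoking the replacement lemma under $\mu^N$, which relies on the stochastic-domination hypothesis \eqref{eq:stoch_domination} together with Lemma \ref{lemma:attractiveness} to transfer $g$-moment bounds from $\bar\nu_N$ to $\mu^N$.
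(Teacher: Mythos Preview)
Your overall strategy (replacement lemma to pass to $\Phi$, entropy inequality plus Feynman--Kac, summation by parts plus Young against the Dirichlet form, then a countable-dense-family argument and Portmanteau) is exactly the route the paper takes. The difference is in the choice of reference measure: you work with the invariant measure $\bar\nu_N$, whereas the paper works with the \emph{translation-invariant} product measure $\bar\nu_\varphi$ with constant fugacity $\varphi\ge\max\{\sup_x\bar\varphi_N(x),\alpha/\lambda,\beta/\delta\}$, transferring the entropy bound via Appendix~\ref{sec:app_entropy}. With $\bar\nu_\varphi$, the bulk detailed-balance identity is exact (Lemma~\ref{lemma:bulk}), so the zero-range integration by parts produces no correction terms and the Young step goes through verbatim as in \cite[Lemma~5.7.3]{kl99}; the entire price of non-invariance is paid at the two boundary sites (Lemmas~\ref{lemma:left_boundary} and~\ref{lemma:right_boundary}), giving a single additive constant of order $N^{-\theta}$ that does not depend on the density~$f$.

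Your route is not wrong in spirit, but the obstacle you flag is more serious than you indicate. With $\bar\nu_N$ as reference, the bulk change of variables produces, at every bond $(x,x{+}1)$, a correction of the form
\[
\Big(\tfrac{\bar\varphi_N(x{+}1)}{\bar\varphi_N(x)}-1\Big)\int g(\eta(x))\,f(\eta^{x,x+1})\,\de\bar\nu_N.
\]
The ratio discrepancy is indeed $O(N^{-\theta})$, but the integral $\int g(\eta(\cdot))f\,\de\bar\nu_N$ is \emph{not} bounded uniformly over densities $f$ (since $g$ is unbounded), and the quadratic term you subtract carries the weight $H_s^2$ rather than a uniform one, so it does not absorb the correction on the set where $H_s$ is small. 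Closing this would require an additional argument (e.g.\ a separate a priori bound on $\sum_x\int g(\eta(x))f\,\de\bar\nu_N$ in terms of the Dirichlet form, which is not available here). The paper sidesteps this entirely by choosing the constant-fugacity reference, which is precisely why that choice is made throughout Sections~\ref{sec:replacement_bulk}--\ref{sec:replacement_boundary}.
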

Hence, we conclude that $Q^*$ is concentrated on weak solutions of \eqref{eq:pde_zr} in the sense of Definition \ref{def:weak_solutions}.

\subsection{Replacement Lemma at the Bulk}\label{sec:replacement_bulk}
We prove in this subsection a replacement lemma for a sequence of probability measures stochastically dominated by the product invariant measure $\bar\nu_N$. Recall that, for a fixed time $T>0$ and for a probability measure $\mu^N$ on $\mathbb{N}^{I_N}$, we denote by $\mathbb{P}_{\mu^N}$ the probability measure on the space $\mathcal D([0,T], \mathbb{N}^{I_N})$ corresponding to the zero-range process with generator $\mathcal{L}_N$, defined in \eqref{eq:generator}, speeded up by $N^2$ and starting from a measure $\mu^N$, and by $\expected_{\mu^N}[\,\cdot\,]$ expectations with respect to $\prob_{\mu^N}$. Given $\ell\in\mathbb{N}$, let $\bar\eta^\ell_s(x) :=\frac{1}{2\ell+1} \sum_{|y-x|\le \ell}\eta_s(y)$, and for $x\in I_N$, let $\tau_x$ denote the translation operator by $x$, so that $\tau_x\eta(\cdot):=\eta(x+\cdot)$.
\begin{lemma}[Bulk Replacement Lemma]\label{lemma:repl_bulk}
Under the hypotheses of Theorem \ref{thm:main}, for any $t\in[0,T]$ and any $G\in C^{1, 2}([0, T]\times[0, 1])$, we have that
\begin{equation}
    \mylimsup_{\eps\to0}\mylimsup_{N\to\infty} \expected_{\mu^N}\left[\left|\mathscr{R}_{N,\eps}^0(G,t,\eta)\right|\right]=0,
\end{equation}
where 
 \begin{equation*}
    \mathscr{R}_{N,\eps}^0(G, t, \eta):=\int_0^t \frac{1}{N} \sum_{x\in I^\eps_N} \Delta G_s\left(\frac{x}{N}\right) \tau_xV_{\eps N}(\eta_s)\de s,
\end{equation*}
and, for $\ell\in\mathbb{N}$,
\begin{equation}\label{eq:Vl}
    V_\ell(\eta):= \frac{1}{2\ell+1} \sum_{|y|\le \ell} g(\eta(y))-\Phi(\bar\eta^{\ell}(0)).
\end{equation}
\end{lemma}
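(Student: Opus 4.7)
My plan is to follow the entropy method of \cite{gpv88}. Because $\bar\nu_N$ is not translation invariant, I will work with the product reference measure $\nu^* := \bar\nu_{\varphi^*}$ of the form \eqref{eq:prod_measure} associated to a constant fugacity $\varphi^* \ge \mysup_N\mysup_{x\in I_N}\bar\varphi_N(x)$, which lies strictly below the radius of convergence by \eqref{eq:params_condition}. This measure is translation invariant and invariant for the bulk generator $\mathcal{L}^N_0$. A direct comparison of Radon--Nikodym derivatives between the two product measures transfers the assumption $H(\mu^N|\bar\nu_N)\lesssim N$ into $H(\mu^N|\nu^*)\lesssim N$.

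\textbf{Step 1 (Entropy and Feynman--Kac).} The entropy inequality, together with the elementary bound $\exp|y|\le\exp(y)+\exp(-y)$, reduces the problem to showing that, for every $B>0$,
\begin{equation*}
\mylimsup_{\eps\to 0}\mylimsup_{N\to\infty}\frac{1}{BN}\log\expected_{\nu^*}\left[\exp\left(\pm BN\,\mathscr{R}^0_{N,\eps}(G,t,\eta)\right)\right]\le 0.
\end{equation*}
By the Feynman--Kac formula, the left-hand side is bounded above by
\begin{equation*}
\int_0^t\mysup_f\left\{\frac{1}{N}\sum_{x\in I_N^\eps}\Delta G_s\Big(\tfrac{x}{N}\Big)\int \tau_xV_{\eps N}(\eta)\,f(\eta)\,\de\nu^* \;-\;\frac{N}{B}\,\langle-\mathcal{L}^N\sqrt f,\sqrt f\rangle_{\nu^*}\right\}\de s,
\end{equation*}
where the supremum runs over $\nu^*$-densities $f$. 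Since $\nu^*$ is invariant for $\mathcal{L}^N_0$, the Dirichlet form splits as the nonnegative bulk carré-du-champ plus a boundary discrepancy of order $N^{-\theta}$; after the diffusive acceleration this contributes only $O(N^{1-\theta})$, which is absorbable exactly under the assumption $\theta\ge 1$.

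\textbf{Step 2 (One-block and two-block).} Choose a mesoscopic scale $\ell=\ell(N)$ with $\ell\to\infty$ and $\ell^2/N\to 0$, and write
\begin{equation*}
\tau_xV_{\eps N}(\eta)=\underbrace{\tau_xV_{\eps N}(\eta)-\frac{1}{2\eps N+1}\sum_{|y-x|\le\eps N}\Phi(\bar\eta^\ell(y))+\Phi(\bar\eta^{\eps N}(x))}_{\text{one-block}}+\underbrace{\frac{1}{2\eps N+1}\sum_{|y-x|\le\eps N}\Phi(\bar\eta^\ell(y))-\Phi(\bar\eta^{\eps N}(x))}_{\text{two-block}}.
\end{equation*}
For the one-block piece, translation invariance of $\nu^*$ lets me shift each summand to the origin and, after conditioning on the total mass inside $\{-\ell,\ldots,\ell\}$ and on the configuration outside, reduces the integral to an expectation under the canonical invariant measure of the symmetric zero-range on $2\ell+1$ sites. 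Assumption (SG) and the Rayleigh estimate \cite[Corollary A3.1.2]{kl99} for non mean-zero functions (used as in \cite{fmts23}) yield the required bound, with the residual mean vanishing as $\ell\to\infty$ by the equivalence of ensembles for the product reference measure. For the two-block piece, Lipschitz continuity of $\Phi$ (Lemma \ref{lemma:Phi_lips}) reduces the claim to the comparison of empirical densities on mesoscopic boxes at distances up to $\eps N$; this is handled by replaying the Feynman--Kac/Rayleigh argument on the coupled two-box dynamics, whose spectral gap is provided by Lemma \ref{lemma:coupled_gap}. Sending $N\to\infty$, then $\ell\to\infty$, then $\eps\to 0$, and finally $B\to\infty$, concludes the proof.

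\textbf{Main obstacle.} The two-block estimate is the most delicate step. On the one hand, because $\nu^*$ is invariant only for the bulk dynamics, the $O(N^{1-\theta})$ boundary discrepancy in the Dirichlet form must be tracked throughout, which is precisely why the argument breaks down for $\theta<1$. On the other hand, the standard torus proof \cite{kl99} exploits full translation invariance to identify separated boxes via shifts—a structure unavailable here—so the substitution between microscopic and macroscopic averages can only be pushed through via the coupled-box spectral gap of Lemma \ref{lemma:coupled_gap}, combined again with the non mean-zero form of the Rayleigh estimate.
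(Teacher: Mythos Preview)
Your overall strategy matches the paper's: both work with the translation-invariant product reference measure $\bar\nu_\varphi$ (your $\nu^*$), transfer the entropy bound via a comparison of Radon--Nikodym derivatives, apply Feynman--Kac, and close the one- and two-block estimates through the spectral gap and the Rayleigh estimate, invoking Lemma \ref{lemma:coupled_gap} for the coupled two-box dynamics in the two-block step.

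There is, however, a genuine gap in Step 2: you never address the \emph{cutoff of large densities}, without which the Rayleigh estimate cannot be applied. Both \cite[Theorem A3.1.1]{kl99} and its non-mean-zero corollary require the potential to be in $L^\infty$, but $V_\ell$ is unbounded (since $g$ grows linearly), and the spectral-gap/Rayleigh bound involves a maximum over the number $j$ of particles in the box, which is a priori unbounded. The paper handles this by inserting an indicator $\boldsymbol{1}_{[0,A]}(\bar\eta^\ell(x))$ \emph{before} the entropy inequality and controlling the complementary event $\{\bar\eta^\ell(x)>A\}$ via attractiveness (Lemma \ref{lemma:attractiveness}) together with the stochastic-domination hypothesis $\mu^N\le\bar\nu_N$: since $\eta\mapsto\bar\eta^\ell(x)\boldsymbol{1}_{(A,\infty)}(\bar\eta^\ell(x))$ is monotone, one obtains $\expected_{\mu^N}[\,\cdot\,]\le A^{-1}E_{\bar\nu_N}[\bar\eta^\ell(x)^2]$, which vanishes as $A\to\infty$ by Lemma \ref{lemma:bounded_moments}. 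You invoke neither Assumption (ND) nor the hypothesis $\mu^N\le\bar\nu_N$, so this mechanism is unavailable to you. The alternative route---cutting off \emph{inside} the variational problem as in \cite[Section 5.4]{kl99}, via a Dirichlet-form bound of the type $\int\bar\eta^\ell(x)\boldsymbol{1}_{(A,\infty)}(\bar\eta^\ell(x))f\,\de\bar\nu_\varphi\lesssim A^{-1}\big(\mathscr{D}^\ell_x(\sqrt f)+1\big)$---may be feasible here, but it is a nontrivial additional step that you do not mention, and without it the argument does not close.
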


\begin{remark} The function $g$ does not play a special role in Lemma \ref{lemma:repl_bulk}, nor in Lemma \ref{lemma:repl_boundaries} in the next subsection. In fact, recalling Remark \ref{remark:Psi} and Corollary \ref{corol:Psi}, if $\Psi$ is a cylinder Lipschitz function, one can check that the same proof goes through with $g$ replaced by $\Psi$ and the corresponding $\tilde\Psi$ taking the role of $\Phi$.
\end{remark}

\begin{remark}
Before proving Lemma \ref{lemma:repl_bulk}, let us understand the meaning of  $\Phi(\overline{\eta}^{\varepsilon N}_{s}(x))$. Using Lemma \ref{lemma:Phi_lips}, we have that   
\begin{equation}\label{eq:bound_Phi}
    \begin{split}
    \Bigg|\Phi(\bar\eta^{\eps N}_s(x))&-\frac{1}{2\eps N+1}\sum_{|y|\le \eps N}\Phi(\eta_s(x+y))\Bigg|
    \\&\le \frac{1}{2\eps N+1}\sum_{|y|\le \eps N} \Big|\Phi(\bar\eta^{\eps N}_s(x))-\Phi(\eta_s(x+y))\Big|
    \\& \le \frac{1}{2\eps N+1}\sum_{|y|\le \eps N} g^*\Big|\overline{\eta}^{\eps N}_s(x)-\eta_s(x+y)\Big|
    \\&\le g^*\,\Big\{ \bar\eta^{\eps N}_s(x)+\frac{1}{2\eps N+1}\sum_{|y|\le \eps N} \eta_s(x+y)\Big\}
    \\&= 2\,g^*\,\bar\eta^{\eps N}_s(x).
    \end{split}
\end{equation}
\end{remark}

\begin{proof}[Proof of Lemma \ref{lemma:repl_bulk}]
Let $\varphi\ge\mymax\{\mysup_{x\in I_N}\bar\varphi_N(x), \frac{\alpha}{\lambda}, \frac{\beta}{\delta}\}$: we start by bounding the expectation in the statement from above by the sum of 
\begin{equation}\label{eq:equivalence_term}
    \expected_{\mu^N}\left[\left| \int_0^t \frac{1}{N} \sum_{x \in I^\eps_N} \Delta G_s\left( \frac{x}{N} \right) \tau_x\widehat V_{\eps N}(\eta_s)\de s\right|\right]
\end{equation}
and 
\begin{equation}\label{eq:blocks_term}
    \expected_{\mu^N}\left[\left|\int_0^t \frac{1}{N} \sum_{x \in I^\eps_N} \Delta G_s\left( \frac{x}{N} \right) \tau_x\widetilde{V}_{\eps N}(\eta_s)\de s\right|\right],
\end{equation}
where, for $\ell$ positive integer,
\begin{equation*}
    \widehat{V}_{\ell}(\eta):= E_{\bar\nu_\varphi}\Big[g(\eta(0))\Big|\bar\eta^{\ell}(0)\Big]- \Phi(\bar\eta^{\ell}(0))
\end{equation*}
and 
\begin{equation}\label{eq:Vl_tilde}
    \widetilde V_{\ell}(\eta):= \frac{1}{2\ell+1} \sum_{|y|\le\ell} g(\eta(y))-E_{\bar\nu_\varphi}\Big[g(\eta(0))\Big|\bar\eta^{\ell}(0)\Big].
\end{equation}
Here, the measure $\bar\nu_\varphi$ denotes the one defined in \eqref{eq:measure_alpha}. Before proceeding with the proof, we emphasise that the argument is reduced to estimating the last two displays, which involve expectations with respect to the reference measure $\bar\nu_\varphi$ rather than the invariant measure $\bar\nu_N$. This choice is essential: many of our arguments rely on properties of the reference measure such as translation invariance, a spectral gap estimate and an equivalence of ensembles, which are available in the literature for $\bar\nu_\varphi$ but not for $\bar\nu_N$. Throughout the proof, we will indicate precisely where these properties are invoked.

\vspace{1em}
\textbf{Cutoff of Large Densities and Equivalence of Ensembles.} Note that, given $A>0$, we can bound \eqref{eq:equivalence_term} from above by 
\begin{equation}\label{eq:equivalence_term_large}
    \expected_{\mu^N}\left[\left| \int_0^t \frac{1}{N} \sum_{x \in I^\eps_N} \Delta G_s\left( \frac{x}{N} \right) \tau_x\widehat V_{\eps N}(\eta_s)\boldsymbol{1}_{(A, \infty)}\big(\bar\eta_s^{\eps N}(x)\big)\de s\right|\right]
\end{equation}
and
\begin{equation}\label{eq:equivalence_term_small}
    \expected_{\mu^N}\left[\left| \int_0^t \frac{1}{N} \sum_{x \in I^\eps_N} \Delta G_s\left( \frac{x}{N} \right) \tau_x\widehat V_{\eps N}(\eta_s)\boldsymbol{1}_{[0, A]}\big(\bar\eta_s^{\eps N}(x)\big)\de s\right|\right].
\end{equation}
To treat \eqref{eq:equivalence_term_large}, first note that, by \eqref{eq:function_g}, \eqref{eq:expectation_alpha_eta} and \eqref{eq:alpha}, for all $\alpha\ge0$ and $x\in I_N$,
\begin{equation*}
    |\Phi(\alpha)|\le E_{\nu_{\alpha}}\left|g(\eta(x))\right|=E_{\nu_{\alpha}}\left|g(\eta(x))-g(0)\right|\le g^* E_{\nu_{\alpha}}[\eta(x)]= g^*\alpha.
\end{equation*}
Moreover, by the product form and translation invariance of $\bar\nu_\varphi$,
\begin{equation}\label{eq:cond_expectation}
    E_{\bar\nu_\varphi}\Big[g(\eta(x))\Big|\bar \eta^{\ell}(x)\Big]=\frac{1}{2\ell+1}\sum_{|y-x|\le \ell}E_{\bar\nu_\varphi}\Big[g(\eta(y))\Big|\bar\eta^{\ell}(x)\Big]
\end{equation}
for each $x\in I_N^\eps$. This implies that
\begin{equation*}
    \begin{split}
    \left|\tau_x\widehat V_\ell(\eta)\right|&\le \frac{1}{2\ell+1}\sum_{|y-x|\le \ell}E_{\bar\nu_\varphi}\Big[g(\eta(y))\Big|\bar\eta^{\ell}(x)\Big] +|\Phi(\bar\eta^{\ell}(x))|
    \\&\le g^*\Bigg(\frac{1}{2\ell+1} \sum_{|y-x|\leq \ell} \eta(y)\Bigg) +g^* \bar\eta^{\ell}(x)
    \\&=2g^*\bar\eta^{\ell}(x),
    \end{split}
\end{equation*}
and hence $|\Delta G_s\left(\frac{x}{N} \right)\tau_x \widehat V_{\eps N}(\eta)|\le 2g^*\|\Delta G\|_\infty \bar\eta^{\eps N}(x)$, where $\|\cdot\|_\infty$ denotes the $L^\infty$-norm on $[0, T]\times[0, 1]$. Observe now that the map $\eta \mapsto \bar\eta^{\eps N}(x)\boldsymbol{1}_{(A, \infty)}(\bar\eta^{\eps N}(x))$ is non decreasing. Hence, by attractiveness  (\textsc{Assumption (ND)}, \eqref{eq:stoch_domination} and Lemma \ref{lemma:attractiveness}) and the inequality $\boldsymbol{1}_{(A, \infty)}(x)\le \frac{x}{A}$ for $x\ge0$, we get
\begin{align*}
    \eqref{eq:equivalence_term_large}&\le 2g^*\|\Delta G\|_\infty \expected_{\mu^N}\left[\int_0^t \frac{1}{N} \sum_{x \in I^\eps_N} \bar\eta_s^{\eps N}(x)\boldsymbol{1}_{(A, \infty)}\big(\bar\eta_s^{\eps N}(x)\big)\de s\right]
    \\&\le 2g^*\|\Delta G\|_\infty \expected_{\bar\nu_N}\left[\int_0^t \frac{1}{N} \sum_{x \in I^\eps_N} \bar\eta_s^{\eps N}(x)\boldsymbol{1}_{(A, \infty)}\big(\bar\eta_s^{\eps N}(x)\big)\de s\right]
    \\&\le \frac{2t g^*\|\Delta G\|_\infty}{AN}\sum_{x\in I^\eps_N} E_{\bar\nu_N}\left[\bar\eta^{\eps N}(x)^2\right]
    \\&\le \frac{2t g^*\|\Delta G\|_\infty}{A}\mysup_{x\in I_N} E_{\bar\nu_N}\left[\eta(x)^2\right].
\end{align*}
By Lemma \ref{lemma:bounded_moments}, the last display can be bounded from above by a constant times  $A^{-1}$, and therefore it  goes to zero as $A\to\infty.$

As for \eqref{eq:equivalence_term_small}, by the equivalence of ensembles, namely \cite[Corollary A2.1.7]{kl99}, we know that, for any $\ell>0$, 
\begin{equation*}
    \mysup_{\eta\in\Omega_N}\left|\widehat{V}_{\ell}(\eta)\right|\boldsymbol{1}_{[0, A]}\big(\bar\eta^{\ell}(0)\big)\le \frac {C}{\ell},
\end{equation*}
where the constant $C$ does not depend on $N, \ell$ nor $A$. This yields
\begin{equation*}
    \eqref{eq:equivalence_term_small}\le \frac{tC\|\Delta G\|_\infty}{\eps N},
\end{equation*}
which vanishes as $N\to\infty.$   

\vspace{1em}
\textbf{The One-Block and Two-Block Estimates.} It remains to analyse \eqref{eq:blocks_term}. By the same argument presented in \cite[Section 5.3]{kl99}, this can be achieved by two lemmas, which are well-known in the literature by the name of \textit{one-block} and \textit{two-block estimates}. We state and prove them below. \end{proof}

\begin{lemma}[One-Block Estimate] \label{lemma:one_block} Under the hypotheses of Theorem \ref{thm:main}, for any $t\in[0,T]$ and any $G\in C^{1, 2}([0, T]\times[0, 1])$, we have that
\begin{equation}\label{eq:one_block}
    \mylimsup_{\ell\to\infty}\mylimsup_{N\to\infty}\expected_{\mu^N}\left[\left|\int_0^t \frac{1}{N} \sum_{x\in I^\eps_N} \Delta G_s\left(\frac{x}{N}\right) \tau_x\widetilde V_{\ell}(\eta_s)\de s\right|\right]=0.
\end{equation}
\end{lemma}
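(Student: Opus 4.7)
\textbf{Proof plan for Lemma \ref{lemma:one_block}.}
The strategy is the classical entropy/Feynman-Kac/spectral-gap route, combined with the localisation trick used for the Boltzmann-Gibbs principle in \cite{fmts23}. The role of the reference measure is played by the translation-invariant product measure $\bar\nu_\varphi$ introduced just above, which is available to us precisely because we are only using it as a reference (not as an invariant measure). Throughout, write $\mathcal{F}_{N,\ell}(t)$ for the random time integral appearing inside the expectation in \eqref{eq:one_block}.

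First I would transfer the entropy bound. Because $\mu^N\le\bar\nu_N$ and $\bar\nu_N$, $\bar\nu_\varphi$ are both product with $\sup_x\bar\varphi_N(x)\le\varphi<\varphi^*$, a short calculation comparing partition functions gives $H(\mu^N\mid \bar\nu_\varphi)\lesssim N$ starting from $H(\mu^N\mid\bar\nu_N)\lesssim N$. The entropy inequality then yields, for every $B>0$,
\[
    \expected_{\mu^N}\big[|\mathcal{F}_{N,\ell}(t)|\big]\ \le\ \frac{1}{BN}\log E_{\bar\nu_\varphi}\!\left[e^{BN|\mathcal{F}_{N,\ell}(t)|}\right]+\frac{C}{B},
\]
and the elementary bound $e^{|x|}\le e^{x}+e^{-x}$ reduces the task to controlling $\frac{1}{BN}\log E_{\bar\nu_\varphi}\big[e^{\pm BN\mathcal{F}_{N,\ell}(t)}\big]$.

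Second, I would apply the Feynman-Kac inequality to obtain, up to a sign,
\[
    \frac{1}{BN}\log E_{\bar\nu_\varphi}\big[e^{BN\mathcal{F}_{N,\ell}(t)}\big]\ \le\ t\sup_{f}\bigg\{\Big\langle \sum_{x\in I_N^\eps}\frac{\Delta G_s(\tfrac{x}{N})}{N}\,\tau_x\widetilde V_\ell,\,f\Big\rangle_{\bar\nu_\varphi}-\frac{N}{B}\,\mathfrak{D}_N(\sqrt{f})\bigg\},
\]
where the supremum is over $\bar\nu_\varphi$-densities $f$ and $\mathfrak{D}_N$ denotes the carré du champ of $\mathcal L^N$ against $\bar\nu_\varphi$. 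Since $\bar\nu_\varphi$ is \emph{not} invariant, $\mathfrak{D}_N$ differs from the true Dirichlet form by boundary contributions scaling like $N^{1-\theta}$; requiring $\theta\ge1$ makes this discrepancy $O(1)$ and hence absorbable into the main variational term (cf.\ Appendix of the paper). This is the only place where $\theta\ge1$ is used in the one-block estimate.

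Next I would localise. The cylinder $\tau_x\widetilde V_\ell$ is supported on $\Lambda_x:=\{x-\ell,\ldots,x+\ell\}$ and, by its very definition in \eqref{eq:Vl_tilde}, has zero mean under the canonical measure $\bar\nu_\varphi\{\,\cdot\,\mid N_{\Lambda_x}=j\}$ for every $j\ge0$. Partition $I_N^\eps$ into $\sim N/(2\ell+1)$ disjoint blocks; convexity of $\mathfrak{D}_N$ distributes the Dirichlet cost across blocks, splitting the global variational problem into independent local ones. On each block, Theorem A3.1.1 of \cite{kl99} (Rayleigh estimate for mean-zero observables) together with \textsc{Assumption (SG)} (giving $\mathrm{gap}(2\ell+1,j)\ge C/\ell^2$) produces
\[
    \sup_{f}\bigg\{\big\langle \tau_x\widetilde V_\ell,f\big\rangle_{\bar\nu_\varphi}-\frac{N}{B}\,\mathfrak{D}_N^{\Lambda_x}(\sqrt{f})\bigg\}\ \le\ \frac{CB\,\ell^2}{N}\sup_{j\ge0}\mathrm{Var}_{\bar\nu_\varphi\{\cdot\mid N_{\Lambda_x}=j\}}\!\big(\tau_x\widetilde V_\ell\big).
\]
The canonical variance is uniformly bounded in $j$ via the equivalence of ensembles applied to the Lipschitz cylinder $g$, combined with Lemma \ref{lemma:bounded_moments}. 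Summing over the $O(N/\ell)$ blocks, using $|\Delta G_s(x/N)|\le\|\Delta G\|_\infty$ and the factor $1/N$ from the prefactor, and optimising in $B$, yields a bound of order $C(G)\,t\,\ell/N$ (or $\ell^2/N$ after a crude accounting), which vanishes as $N\to\infty$ for each fixed $\ell$.

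The main obstacle is step two: the non-invariance of $\bar\nu_\varphi$ forces a careful bookkeeping of the carré-du-champ/Dirichlet-form discrepancy, and it is precisely this term that dictates the hypothesis $\theta\ge1$. A secondary difficulty, caused by the unboundedness of occupation numbers, is ensuring the local canonical variance is uniformly bounded; this is handled by combining the equivalence of ensembles with Lemma \ref{lemma:bounded_moments}, after having already cut off large-density configurations in the preceding step of the proof of Lemma \ref{lemma:repl_bulk}.
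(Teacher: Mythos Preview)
Your overall architecture---entropy inequality against $\bar\nu_\varphi$, Feynman--Kac, localisation, Rayleigh estimate with \textsc{(SG)}---matches the paper's approach, and you correctly identify that the carr\'e-du-champ/Dirichlet-form discrepancy is where $\theta\ge1$ enters. However, there is a genuine gap.

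The cutoff you refer to in your last sentence as ``already done in the preceding step of the proof of Lemma~\ref{lemma:repl_bulk}'' was applied to the $\widehat V_\ell$ term \eqref{eq:equivalence_term}, \emph{not} to the $\widetilde V_\ell$ term \eqref{eq:blocks_term} that Lemma~\ref{lemma:one_block} is about. The paper's proof of the one-block estimate begins by inserting a \emph{fresh} cutoff $\boldsymbol{1}_{[0,A]}(\bar\eta^\ell(x))$ and disposing of the complementary part via attractiveness and Lemma~\ref{lemma:bounded_moments}. This is not cosmetic: without it, two of your steps fail. First, the Rayleigh estimate \cite[Theorem~A3.1.1]{kl99} requires the observable to be in $L^\infty$, and the denominator in the resulting bound contains $\|m_x^\ell\|_\infty/\mathrm{gap}$; the raw $\tau_x\widetilde V_\ell$ is unbounded since $g$ grows linearly. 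The cutoff is precisely what gives $\|m_x^\ell\|_\infty\le 2Ag^*$. Second, your claim that $\sup_{j\ge 0}\mathrm{Var}_{\bar\nu_\varphi\{\cdot\mid N_{\Lambda_x}=j\}}(\tau_x\widetilde V_\ell)<\infty$ is false: this supremum diverges as $j\to\infty$. Equivalence of ensembles compares the canonical measure at density $j/(2\ell+1)$ to the grand-canonical one at the same density, but it does not give a bound uniform over arbitrarily large densities. The cutoff restricts the maximum to $j\le(2\ell+1)A$, which is what makes everything finite.

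A smaller point: your ``partition into disjoint blocks'' does not match the structure of the problem, since the supports $\Lambda_x^\ell$ of the summands $\tau_x\widetilde V_\ell$ overlap as $x$ ranges over $I_N^\eps$. The paper instead uses the counting inequality $\mathscr D_N^0\ge(2\ell+1)^{-1}\sum_{z\in I_N^\eps}\mathscr D_z^\ell$ with overlapping boxes, then pulls the supremum inside the sum over $x$.
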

\begin{proof} \textbf{Cutoff of Large Densities.} Fix $A>0$: we start by bounding the expectation in the statement from above by the sum of
\begin{equation}\label{eq:largeDensities1}
    \expected_{\mu^N}\left[\left|\int_0^t \frac{1}{N} \sum_{x \in I^\eps_N} \Delta G_s\left(\frac{x}{N}\right) \tau_x\widetilde V_{\ell}(\eta_s)\boldsymbol{1}_{(A, \infty)}\big(\bar\eta_s^{\ell}(x)\big)\de s\right|\right]
\end{equation}
and
\begin{equation}\label{eq:smallDensities1}
    \expected_{\mu^N}\left[\left|\int_0^t \frac{1}{N} \sum_{x \in I^\eps_N} \Delta G_s\left(\frac{x}{N}\right) \tau_x\widetilde V_{\ell}(\eta_s)\boldsymbol{1}_{[0, A]}\big(\bar\eta_s^{\ell}(x)\big)\de s\right|\right].
\end{equation}
Recalling \eqref{eq:Vl_tilde}, by \eqref{eq:cond_expectation} and Lemma \ref{lemma:Phi_lips}, we have that
\begin{align*}
    \left|\tau_x\widetilde V_{\ell}(\eta)\right|&\le \frac{1}{2\ell+1} \sum_{|y-x|\le \ell} \left\{g(\eta(y))+E_{\bar\nu_\varphi}\Big[g(\eta(y))\Big|\overline{\rm \eta}^{\ell}(x)\Big]\right\}
    \le 2 g^*\bar\eta^\ell(x).
\end{align*}
But now, since the map $\eta\mapsto \bar\eta^\ell(x)\boldsymbol{1}_{(A, \infty)}\big(\bar\eta^\ell(x)\big)$ is non decreasing, by \textsc{Assumption (ND)}, \eqref{eq:stoch_domination}, Lemma \ref{lemma:attractiveness} and the inequality $\boldsymbol{1}_{(A, \infty)}(x)\le\frac{x}{A}$ for $x\ge0$, we see that
\begin{align*}
    \eqref{eq:largeDensities1}&\le 2g^*\expected_{\bar\nu_N} \left[\int_0^t \frac{1}{N} \sum_{x \in I^\eps_N} \left|\Delta G_s\left(\frac{x}{N}\right)\right| \bar\eta^\ell_s(x)\boldsymbol{1}_{(A, \infty)}\big(\bar\eta_s^{\ell}(x)\big)\de s\right]
    \\&\le \frac{2g^*t\|\Delta G\|_\infty}{AN}\sum_{x\in I_N^\eps }E_{\bar\nu_N}\left[\bar\eta^\ell(x)^2\right]
    \\&\le \frac{2Cg^*t\|\Delta G\|_\infty}{A}\mysup_{x\in I_N}E_{\bar\nu_N}\left[\eta(x)^2\right],
\end{align*}
which vanishes as $N\to\infty$ and $A\to\infty$ by Lemma \ref{lemma:bounded_moments}.

\vspace{1em} \textbf{Entropy Inequality and Feynmac-Kac.} 
Recall the choice of $\varphi$ in \eqref{eq:Vl_tilde}: by the entropy inequality and Jensen's inequality, for each $M>0$, \eqref{eq:smallDensities1} is bounded from above by
\begin{equation*}
    \frac{H(\mu^N|\bar\nu_\varphi)}{MN}+\frac{1}{MN}\mylog\expected_{\bar\nu_\varphi}\left[e^{MN\left|\int_0^t\frac{1}{N} \sum_{x \in I^\eps_N} \Delta G_s\left( \tfrac{x}{N} \right) \tau_x\widetilde V_{\ell}(\eta_s)\boldsymbol{1}_{[0, A]}\big(\bar\eta_s^{\ell}(x)\big)\de s\right|}\right].
\end{equation*}
Note that, in fact, the entropy condition stated in Theorem \ref{thm:main} is $H(\mu^N|\bar\nu_N)\lesssim N$, but from Appendix \ref{sec:app_entropy}  this implies  $H(\mu^N|\bar\nu_\varphi)\lesssim N$. Therefore, the leftmost term in the previous display  vanishes as $N\to\infty$ and $M\to\infty$, so we are only left to bound the second term. Using the fact that $e^{|x|}\le e^x+e^{-x}$, we can remove the absolute value in the exponential: then, setting
\begin{equation}\label{def:mx}
    m_x^\ell(\eta):=\tau_x\widetilde V_{\ell}(\eta)\boldsymbol{1}_{[0, A]}\big(\bar\eta^{\ell}(x)\big),
\end{equation}
by the Feynman-Kac formula \cite[Lemma A.1]{bmns17}, this term can be bounded from above by
\begin{align}
    &\mysup_f\left\{\int_0^t \frac{1}{N} \sum_{x \in I^\eps_N} G_s\left(\frac{x}{N}\right)\left\langle m_x^\ell, f\right\rangle _{\bar\nu_\varphi}\de s+\frac{tN}{M}\left\langle\mathcal{L}^N\sqrt{f}, \sqrt{f}\right\rangle_{\bar\nu_\varphi}\right\}\nonumber
    \\&\le \mysup_f\left\{\int_0^t \frac{1}{N} \sum_{x \in I^\eps_N} G_s\left(\frac{x}{N}\right)\left\langle m_x^\ell, f\right\rangle_{\bar\nu_\varphi}\de s-\frac{tN}{2M}\mathscr{D}^0_{N}(\sqrt{f},\bar{\nu}_{\varphi})\right\}+\frac{tCN}{MN^\theta},\label{eq:supremum1}
\end{align} where the supremum runs over all probability densities $f$ with respect to $\bar\nu_\varphi$. Here, $\mathscr{D}_N^0$ denotes the non-negative function defined by, for positive densities $f$ with respect to $\bar\nu_\varphi$,
\begin{equation}
    \begin{split}&\mathscr{D}^0_{N}\left(\sqrt{f},\bar{\nu}_{\varphi}\right) :=  \int_{\Omega_N}\Bigg\{\sum_{x=1}^{N-2} g(\eta(x)) \left[\sqrt{f(\eta^{x,x+1})}-\sqrt{f(\eta)}\right]^2 
    \\&\phantom{\mathscr{D}^0_{N}(\sqrt{f},\bar{\nu}_{\varphi}) :=  \int\Bigg\{}+ \sum_{x=2}^{N-1} g(\eta(x)) \left[\sqrt{f(\eta^{x,x-1})}-\sqrt{f(\eta)}\right]^2\Bigg\}\de \bar{\nu}_{\varphi},\end{split}\label{eq:dirichlet_bulk}
\end{equation} 
$C$ is a constant independent of $N$, and the inequality follows from Lemmas  \ref{lemma:left_boundary}, \ref{lemma:right_boundary} and \ref{lemma:bulk}.  

\vspace{1em}
\textbf{Localisation of the Dirichlet Form.} From here, we follow an approach similar to that in the proof of \cite[Lemma 3.1]{fmts23}. In the following bounds, it is crucial to work with a translation-invariant reference measure. Without this property, our computations would become much more involved. We start by \textit{localising} the bulk Dirichlet form, in the following sense. Fix $x\in I_N$, $\ell\in\mathbb N$ and let $\Lambda_x^{\ell}:=\{y\in I_N^\eps: |y-x|\le\ell\}$.  Setting
\begin{equation*}
    I_{x, y}(f):=\int_{\Omega_N}g(\eta(x))\left[f(\eta^{x, y})-f(\eta)\right]^2\de \bar\nu_\varphi,
\end{equation*}
we write
\begin{align*}
    \mathscr{D}^0_{N}(\sqrt{f},\bar{\nu}_{\varphi})&=\sum_{\substack{x, y\in I_N\\ |x-y|=1}}\sum_{z\in\Lambda_x^\ell}\frac{1}{|\Lambda_x^\ell|}I_{x, y}(\sqrt{f})
    \\&\ge\frac{1}{2\ell+1}\sum_{z\in I_N^\eps}\Big\{\sum_{\substack{x, y\in \Lambda_z^\ell\\ |y-x|=1}} I_{x, y}(\sqrt{f})\Big\}
    \\&=:\frac{1}{2\ell+1}\sum_{z\in I_N^\eps }\mathscr{D}_z^\ell(\sqrt{f}),
\end{align*}
where we used the fact that $|\Lambda_x^\ell|\le2\ell+1$ for all $x$, and $I_{x, y}\ge0$ for all $x, y$. Thus, we have that
\begin{equation}\label{eq:supremum2}
    \eqref{eq:supremum1}\le \int_0^t \sum_{x \in I^\eps_N} G_s\left(\frac{x}{N}\right)\mysup_f \left\{\frac{1}{N} \left\langle m_x^\ell, f\right\rangle _{\bar\nu_\varphi}-\frac{N}{2M(2\ell+1)}\mathscr{D}_x^\ell(\sqrt{f})\right\}\de s+\frac{tCN^{1-\theta}}{M}.
\end{equation}

Now, recalling the definition of $I_{x, y}$, by the same computations made in the proof of Lemma \ref{lemma:bulk} in the Appendix, we see that
\begin{equation}
    \mathscr{D}_x^\ell(\sqrt{f})=-2\left\langle \mathcal{L}_x^\ell\sqrt{f}, \sqrt{f}\right\rangle_{\bar\nu_\varphi},
\end{equation}
where $\mathcal{L}_x^\ell$ denotes the generator $\mathcal{L}^N$ restricted to $\Lambda_x^\ell$. Let now
\begin{equation*}
    \mathcal{X}_{x, j}^\ell:=\Big\{\eta\in \mathbb{N}^{\Lambda_x^\ell}: \sum_{y\in\Lambda_x^\ell}\eta_y=j\Big\}
\end{equation*}
denote the set of particle configurations on $\Lambda_x^\ell$ with exactly $j$ particles. For $ f:\mathcal{X}_{x, j}^\ell\to\mathbb{R}$, we introduce the generator 
\begin{align*}
    \mathcal{L}_{x, j}^\ell f(\eta):=\sum_{\substack{y, z\in\Lambda_x^\ell\\ |y-z|=1}}g(\eta(y))\left[f(\eta^{y, z})-f(\eta)\right]
\end{align*}
and the quadratic form 
\begin{equation*}
    \mathscr{D}_{x, j}^\ell(f):=-2\left\langle f, \mathcal{L}_{x, j}^\ell f \right\rangle_{\bar\nu_{x, j}^\ell}
\end{equation*}
with respect to the conditional measure $\bar\nu_{x, j}^\ell :=\bar\nu_\varphi\{\cdot\ |\ \eta\in\mathcal{X}_{x, j}^\ell\}$. Then, by conditioning on the event $\eta\in\mathcal{X}_{x, j}^\ell$ and taking the maximum over $j\le (2\ell+1)A$ (which we can do since $m_x^\ell$ lives on the event $\bar\eta^\ell(x)\le A$), we see that
\begin{equation}\label{eq:supremum3}
    \begin{split}\eqref{eq:supremum2}&\le \frac{N}{M(2\ell+1)}\int_0^t \sum_{x\in I_N^\eps} G_s\left(\frac{x}{N}\right)\times
    \\&\phantom{\le \frac{N}{M(2\ell+1)}\int_0^t}\times\mymax_{j\le (2\ell+1)A}\mysup_f \left\{\frac{M(2\ell+1)}{N^2}\left\langle m_x^\ell, f\right\rangle_{\bar\nu_{x, j}^\ell} -\frac{1}{2}\mathscr{D}_{x, j}^\ell(\sqrt{f})\right\}\de s
    \\&\phantom{\le}+\frac{tCN^{1-\theta}}{M},
    \end{split}
\end{equation}
where the supremum now runs along all probability densities $f$ on $\mathcal{X}_{x, j}^\ell$ with respect to $\bar\nu_{x, j}^\ell$.

\vspace{1em}
\textbf{Spectral Gap and Rayleigh Estimate.} Recalling \eqref{eq:Vl_tilde} and \eqref{def:mx}, by \eqref{eq:cond_expectation} and Lemma \ref{lemma:Phi_lips} we see that
\begin{align*}
   \left|m_x^\ell(\eta)\right|&=\left|\tau_x\widetilde V_{\ell}(\eta)\boldsymbol{1}_{[0, A]}\big(\bar\eta^{\ell}(x)\big)\right|
   \\&= \left|\frac{1}{2\ell+1} \sum_{y\in\Lambda_x^\ell} g(\eta(y))-E_{\bar\nu_\varphi}\Big[g(\eta(x))\Big|\bar\eta^{\ell}(x)\Big]\right|\boldsymbol{1}_{[0, A]}\big(\bar\eta^{\ell}(x)\big)
   \\&\le 2Ag^*.
\end{align*}
Moreover, since $j\le (2\ell+1)A$ and again by \eqref{eq:cond_expectation},
\begin{equation*}
    \langle m_x^\ell\rangle_{\bar\nu^\ell_{x, j}} = E_{\bar\nu^\ell_{x, j}}\left[\frac{1}{2\ell+1}\sum_{y\in\Lambda_x^\ell}\left\{g(\eta(y))-E_{\bar\nu_\varphi}\Big[g(\eta(y))\Big|\bar\eta^{\ell}(x)\Big]\right\}\right]=0.
\end{equation*}
Thus, since $m_x^\ell$ is centred and uniformly bounded, we can apply the Rayleigh estimate \cite[Theorem A3.1.1]{kl99}, so that for each $x$ the maximum in \eqref{eq:supremum3} satisfies
\begin{align}
    &\mymax_{j\le (2\ell+1)A}\mysup_f \left\{\frac{M(2\ell+1)}{N^2}\left\langle m_x^\ell, f\right\rangle_{\bar\nu_{x, j}^\ell} - \frac{1}{2}\mathscr{D}_{x, j}^\ell(\sqrt{f})\right\}\nonumber
    \\&\le \frac{M^2(2\ell+1)^2}{N^4} \mymax_{j\le (2\ell+1)A}\frac{\left\langle(-\mathcal{L}_{x, j}^\ell)^{-1} m_x^\ell, m_x^\ell\right\rangle_{\bar\nu_{x, j}^\ell}}{1-\frac{4M(2\ell+1)\|m_x^\ell\|_\infty}{N^2\text{gap}(j, \ell)}}.\label{eq:maxTerm}
\end{align}
Here, $\text{gap}(j, \ell)$ is the spectral gap of $\mathcal{L}_{x, j}^\ell$. Since, by \textsc{Assumption (SG)}, $\text{gap}(j, \ell)\ge\frac{C}{\ell^2}$, the denominator in the maximum in \eqref{eq:maxTerm} is bounded from below by $1-CM\ell^3N^{-2}$ for some constant $C$ independent of $\ell$ and $N$; the numerator, instead, satisfies
\begin{equation*}
    \left\langle(-\mathcal{L}_{x, j}^\ell)^{-1} m_x^\ell, m_x^\ell\right\rangle_{\bar\nu_{x, j}^\ell}\le \frac{\|m_x^\ell\|_\infty^2}{\text{gap}(j, \ell)}\le C\ell^2.
\end{equation*}
Combining everything together, we finally obtain that, for each $M>0$, the expectation in \eqref{eq:one_block} is bounded from above by
\begin{equation*}
\frac{C}{M}+ \frac{tCM\ell^3}{N^2(1-\frac{CM\ell^3}{N^2})} +\frac{tCN^{1-\theta}}{M},
\end{equation*}
where $C$ denotes a generic positive constant independent of $M$ and $\ell$. Since $M$ is arbitrary, by sending $N\to\infty$ the claim follows.
\end{proof}

\begin{lemma}[Two-Block Estimate] \label{lemma:two_block} Under the hypotheses of Theorem \ref{thm:main}, for any $t\in[0,T]$ and any $G\in C^{1, 2}([0, T]\times[0, 1])$, 
\begin{equation*}
    \begin{split}\mylimsup_{\ell \to \infty}\mylimsup_{\eps \to 0}\mylimsup_{N\to\infty}  \mysup_{|y|\leq \varepsilon N} \expected_{\mu^N}\Bigg[&\int_0^t \frac{1}{N}  \sum_{x \in I^\eps_N} \Delta G_s\left(\frac{x}{N}\right)\times
    \\&\times \left|\bar\eta^{\ell}_s(x+y)-\bar\eta^{\eps N}_s(x) \right| 
    \de  s\Bigg]=0.
    \end{split}
\end{equation*}
\end{lemma}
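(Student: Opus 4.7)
My plan is to follow the same blueprint as the proof of Lemma \ref{lemma:one_block}, replacing the single-box spectral-gap bound with the coupled-box one provided by Lemma \ref{lemma:coupled_gap}. First, I would rewrite $\bar\eta^{\eps N}_s(x)$ as an average over $|z|\le \eps N$ of the size-$\ell$ averages $\bar\eta^\ell_s(x+z)$, up to a negligible $O(\ell/(\eps N))$ correction coming from window mismatches near the endpoints of the averaging interval. The triangle inequality then reduces the statement to showing
\begin{equation*}
\mylimsup_{\ell\to\infty}\mylimsup_{\eps\to0}\mylimsup_{N\to\infty}\mysup_{|y|\le\eps N}\expected_{\mu^N}\left[\int_0^t\frac{1}{N}\sum_{x\in I^\eps_N}\Delta G_s\left(\frac{x}{N}\right)\frac{1}{2\eps N+1}\sum_{|z|\le\eps N}\left|\bar\eta^\ell_s(x+y)-\bar\eta^\ell_s(x+z)\right|\de s\right]=0.
\end{equation*}
As in the proof of Lemma \ref{lemma:one_block}, I then cut off on the event $\{\bar\eta^\ell(x+y)\vee\bar\eta^\ell(x+z)\le A\}$; by attractiveness (combining \textsc{Assumption (ND)}, \eqref{eq:stoch_domination} and Lemma \ref{lemma:attractiveness}) together with the uniform second-moment bound of Lemma \ref{lemma:bounded_moments}, the complementary contribution is $O(A^{-1})$ and vanishes as $A\to\infty$.

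Next, I apply the entropy inequality with respect to the translation-invariant reference measure $\bar\nu_\varphi$ (with $\varphi$ chosen as in the proof of Lemma \ref{lemma:one_block}), followed by the Feynman--Kac inequality of \cite[Lemma A.1]{bmns17}. As in Lemma \ref{lemma:one_block}, the boundary part of the generator contributes $O(N^{1-\theta}/M)$ and is harmless because $\theta\ge 1$. What is left is to control, for each fixed $x,y,z$ and each admissible number of particles $j$, a variational problem of the form
\begin{equation*}
\mysup_{f}\left\{C_{M,\ell,\eps,N}\,\bigl\langle m_{x,y,z}^\ell, f\bigr\rangle_{\bar\nu^\ell_{x,y,z,j}}-\tfrac{1}{2}\mathscr D^{\ell,\ell}_{x,y,z,j}(\sqrt f)\right\},
\end{equation*}
where $m_{x,y,z}^\ell(\eta):=|\bar\eta^\ell(x+y)-\bar\eta^\ell(x+z)|\boldsymbol 1_{[0,A]^2}\bigl(\bar\eta^\ell(x+y),\bar\eta^\ell(x+z)\bigr)$ and $\mathscr D^{\ell,\ell}_{x,y,z,j}$ is the Dirichlet form associated with the coupled generator $\bar{\mathcal G}^{\ell,\ell}$ on the two windows $\Lambda_1:=\{x+y-\ell,\ldots,x+y+\ell\}$ and $\Lambda_w:=\{x+z-\ell,\ldots,x+z+\ell\}$, restricted to the set where the total number of particles is $j\le 2A(2\ell+1)$.

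The main obstacle is the passage from the nearest-neighbour bulk Dirichlet form $\mathscr D^0_N$ produced by Feynman--Kac to the coupled one $\mathscr D^{\ell,\ell}$ required by Lemma \ref{lemma:coupled_gap}. The coupled Dirichlet form contains an extra ``long-jump'' bond between the midpoints of $\Lambda_1$ and $\Lambda_w$; to manufacture it, I would telescope this jump into $|z-y|\le 2\eps N$ adjacent moves along the path connecting the two windows, each controlled by a corresponding bond in $\mathscr D^0_N$ via Cauchy--Schwarz. This introduces a multiplicative loss of order $|z-y|$, which is absorbed by the $N^2$ time-acceleration together with the $1/(2\eps N+1)$ factor coming from the average over $z$. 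After this reduction, Lemma \ref{lemma:coupled_gap} yields a coupled spectral gap of order $\ell^{-2}$, uniform in $j$ within the admissible range (since the second term in the minimum of Lemma \ref{lemma:coupled_gap} depends only on $\ell$ and $j\le 2A(2\ell+1)$). Since $m_{x,y,z}^\ell$ is \emph{not} centred under $\bar\nu^\ell_{x,y,z,j}$, I invoke the generalised Rayleigh estimate of \cite[Corollary A3.1.2]{kl99} instead of \cite[Theorem A3.1.1]{kl99}. The residual mean $E_{\bar\nu^\ell_{x,y,z,j}}[m_{x,y,z}^\ell]$ is of order $\ell^{-1/2}$ by a CLT computation for $|\bar\eta^\ell(x+y)-\bar\eta^\ell(x+z)|$ under the translation-invariant grand-canonical $\bar\nu_\varphi$, together with an equivalence-of-ensembles correction in the spirit of \cite[Corollary A2.1.7]{kl99}, so it vanishes as $\ell\to\infty$. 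Optimising over $M$ and taking limits in the prescribed order $N\to\infty$, $\eps\to 0$, $\ell\to\infty$ completes the proof.
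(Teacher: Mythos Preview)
Your proof plan is correct and follows essentially the same approach as the paper: reduce $\bar\eta^{\eps N}(x)$ to an average of $\bar\eta^\ell(x+z)$ (with the overlap cases $|y-z|\le 2\ell$ absorbed into the $O(\ell/\eps N)$ remainder), cut off large densities via attractiveness, apply the entropy inequality and Feynman--Kac against $\bar\nu_\varphi$, telescope the long jump between window midpoints into nearest-neighbour moves to compare with $\mathscr D_N^0$, then invoke the generalised Rayleigh estimate \cite[Corollary A3.1.2]{kl99} together with Lemma~\ref{lemma:coupled_gap} and the $O(\ell^{-1/2})$ bound on the canonical mean of $|\bar\eta^\ell(x+y)-\bar\eta^\ell(x+z)|$. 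One small imprecision: Lemma~\ref{lemma:coupled_gap} does \emph{not} yield a coupled gap of order $\ell^{-2}$ uniformly in $j$ (the second term $C(g,j,\ell)$ in the minimum may be much smaller), but as your parenthetical correctly indicates, it is bounded below by a positive constant depending only on $g,\ell,A$, and this is all that is needed since the limits are taken in the order $N\to\infty$, $\eps\to0$, $\ell\to\infty$---the paper accordingly writes these bounds as generic constants $C(\ell)$.
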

\begin{proof} \textbf{Reduction to Boxes of Size $\boldsymbol{\ell}$}. We start by noting that, for each {$x\in I_N$} and given $|y|\le 2\eps N$, 
\begin{align}
    \left|\bar\eta^{\ell}(x+y)-\bar\eta^{\eps N}(x) \right| &\le\frac{1}{2\eps N+1}\sum_{\substack{|z|\le \eps N\\ |y-z|>2\ell}}\left|\bar\eta^{\ell}(x+y)-\bar\eta^\ell(x+z) \right|\label{eq:box_sum}
    \\&\phantom{\le}+\mathscr{R}_{x, y}^N(\eta).\nonumber
\end{align}
Here, $\mathscr{R}_{x, y}^N(\eta)$ is the ``remainder" term which takes into account the terms outside of $\{x-\eps N, \ldots, x+\eps N\}$ as well as the boxes of size $\ell$ centred at $x+z$ with $|y-z|\le 2\ell$, namely
\begin{align*}
    &\mathscr{R}_{x, y}^N(\eta):= \frac{1}{2\eps N+1}\bigg\{\bar\eta^\ell(x+y-2\ell)+\ldots +\bar\eta^\ell(x+y+2\ell)
    \\&+\eta(x-\eps N-\ell)+2\eta(x-\eps N-\ell+1)+\ldots + (\ell-1)\eta(x-\eps N-1)
    \\&+(\ell-1)\eta(x+\eps N+1)+(\ell-2)\eta(x+\eps N+2)+\ldots + \eta(x+\eps N+\ell)\bigg\}.
\end{align*}
Now note that the map $\eta\mapsto\mathscr{R}_{x, y}^N(\eta)$ is non-decreasing: hence, by attractiveness (that is, \textsc{Assumption (ND)}, \eqref{eq:stoch_domination} and Lemma \ref{lemma:attractiveness}), the term in the expectation corresponding to $\mathscr{R}_{x, y}^N(\eta)$ satisfies
\begin{align*}
    \expected_{\mu^N}\left[\int_0^t \frac{1}{N}\sum_{x\in I^\eps_N}\Delta G_s\left(\frac{x}{N}\right)\mathscr{R}_{x, y}^N(\eta)\de s \right]&\le \expected_{\bar\nu_N}\left[\int_0^t \frac{1}{N}\sum_{x\in I^\eps_N}\Delta G_s\left(\frac{x}{N}\right)\mathscr{R}_{x, y}^N(\eta)\de s \right]
    \\&\le \frac{Ct\ell^2\|\Delta G\|_\infty}{2\eps N+1}\mysup_{x\in I_N}E_{\bar\nu_N}[\eta(x)]
\end{align*}
for some constant $C$ independent of $N$, which vanishes as $N\to\infty$ by Lemma \ref{lemma:bounded_moments}. Hence, we are only left to treat \eqref{eq:box_sum}.

\vspace{1em} \textbf{Cutoff of Large Densities.} Let $A>0$: we can write the expectation in the statement corresponding to the rightmost term in the first line of \eqref{eq:box_sum} as the sum of
\begin{equation}\label{eq:largeDensities}
    \begin{split}\expected_{\mu^N}\Bigg[\int_0^t \frac{1}{N(2\eps N+1)}  &\sum_{x \in I^\eps_N} \Delta G_s\left(\frac{x}{N}\right)\sum_{\substack{|z|\le \eps N\\ |y-z|>2\ell}}\left|\bar\eta^{\ell}(x+y)-\bar\eta^\ell(x+z) \right|\times
    \\&\times \boldsymbol{1}_{(A, \infty)}\big(\bar\eta_s^{\ell}(x+y)\vee \bar\eta_s^{\ell}(x+z)\big)
    \de  s\Bigg]
    \end{split}
\end{equation}
and
\begin{equation}\label{eq:smallDensities}
    \begin{split}\expected_{\mu^N}\Bigg[\int_0^t \frac{1}{N(2\eps N+1)}&\sum_{x \in I^\eps_N} \Delta G_s\left(\frac{x}{N}\right)\sum_{\substack{|z|\le \eps N\\ |y-z|>2\ell}}\left|\bar\eta^{\ell}(x+y)-\bar\eta^\ell(x+z) \right|\times
    \\&\times \boldsymbol{1}_{[0, A]}\big(\bar\eta_s^{\ell}(x+y)\vee \bar\eta_s^{\ell}(x+z)\big)
    \de  s\Bigg].
    \end{split}
\end{equation}
Now, \eqref{eq:largeDensities} can be bounded from above by
\begin{equation*}
    \begin{split}\frac{\|\Delta G\|_\infty}{N(2\eps N+1)}\expected_{\mu^N}\Bigg[\int_0^t \sum_{x \in I^\eps_N}&\sum_{\substack{|z|\le \eps N\\ |y-z|>2\ell}}\left[\bar\eta^{\ell}(x+y)+\bar\eta^\ell(x+z) \right]\times
    \\&\times\boldsymbol{1}_{(A, \infty)}\big(\bar\eta_s^{\ell}(x+y)\vee \bar\eta_s^{\ell}(x+z)\big)\de s\Bigg].
    \end{split}
\end{equation*}
Note that the map $\eta\mapsto \left[\bar\eta^{\ell}(x+y)+\bar\eta^\ell(x+z) \right] \boldsymbol{1}_{(A, \infty)}\big(\bar\eta^{\ell}(x+y)\vee \bar\eta^\ell(x+z)\big)$ is non decreasing. Thus, by attractiveness (\textsc{Assumption (ND)}, \eqref{eq:stoch_domination} and Lemma \ref{lemma:attractiveness}), the expression in the last display is bounded from above by
\begin{align*}
    &\frac{t\|\Delta G\|_\infty}{N(2\eps N+1)}  \sum_{x \in I^\eps_N}\hspace{-.3em}\sum_{\substack{|z|\le \eps N\\ |y-z|>2\ell}}\hspace{-.4em}E_{\bar\nu_N}\left[\left[\bar\eta^{\ell}(x+y)+\bar\eta^\ell(x+z) \right]\boldsymbol{1}_{(A, \infty)}\big(\bar\eta^{\ell}(x+y)\vee \bar\eta^{\ell}(x+z)\big)\right]
    \\&\le\frac{t\|\Delta G\|_\infty}{AN(2\eps N+1)}  \sum_{x \in I^\eps_N}\sum_{\substack{|z|\le \eps N\\ |y-z|>2\ell}}E_{\bar\nu_N}\left[\left[\bar\eta^{\ell}(x+y)+\bar\eta^\ell(x+z) \right]^2\right]
    \\&\le\frac{2t\|\Delta G\|_\infty}{AN(2\eps N+1)} \sum_{x \in I^\eps_N}\sum_{\substack{|z|\le \eps N\\ |y-z|>2\ell}}\bigg\{E_{\bar\nu_N}\left[\bar\eta^{\ell}(x+y)^2\right]+E_{\bar\nu_N}\left[\bar\eta^\ell(x+z)^2\right]\bigg\}
    \\&\le\frac{2tC\|\Delta G\|_\infty}{A}\mysup_{x\in I_N}E_{\bar\nu_N}[\eta(x)^2],
\end{align*}
for some positive constant $C$ independent of $N$, where in first passage we used the inequality $\boldsymbol{1}_{(A, \infty)}(x\vee y)\le\frac{x+y}{A}$ for $x, y\ge0$. This vanishes as $A\to\infty$ thanks to Lemma \ref{lemma:bounded_moments}.

\vspace{1em} \textbf{Entropy Bound and Feynman-Kac.} We are only left to treat \eqref{eq:smallDensities}. To that end, let 
\begin{equation}\label{eq:p}
    p_{y, z}^{\ell}(\eta):=\left|\bar\eta^{\ell}(y)-\bar\eta^\ell(z) \right|\boldsymbol{1}_{[0, A]}\big(\bar\eta^\ell(y)\vee \bar\eta^\ell(z)\big).
\end{equation}
By repeating the same argument given in the proof of Lemma \ref{lemma:one_block} above, for each $M>0$ and $\varphi\ge\mymax\{\mysup_{x\in I_N}\bar\varphi_N(x), \frac{\alpha}{\lambda}, \frac{\beta}{\delta}\}$, the expectation in the statement corresponding to \eqref{eq:box_sum} is bounded from above by
\begin{equation}\label{eq:supremum4}
    \begin{split}&\mysup_f\left\{\int_0^t \frac{1}{N(2\eps N+1)} \sum_{x \in I^\eps_N}{\Delta G}_s\left(\frac{x}{N}\right)\hspace{-.4em}\sum_{\substack{|z|\le \eps N\\ |y-z|>2\ell}} \hspace{-.3em}\left\langle p_{x+y, x+z}^\ell, f\right\rangle_{\bar\nu_\varphi}\de s-\frac{tN}{2M}\mathscr{D}^0_{N}(\sqrt{f},\bar{\nu}_{\varphi})\right\}
    \\&+\frac{C}{M}+\frac{tCN^{1-\theta}}{M},
    \end{split}
\end{equation}
where the supremum runs over all probability densities $f$ with respect to $\bar\nu_\varphi$.

\vspace{1em}
\textbf{Comparison to the Two-Coordinate Process.} In this step, we adapt an argument given in the proof of \cite[Lemma 5.3.5]{kl99}. Let $\Lambda_{y, z}^{\ell}:=\Lambda_y^\ell \times \Lambda_z^\ell$, and let $\bar\nu_\varphi^{\ell, \ell}$ denote the restriction of $\bar\nu_\varphi$ to $\Lambda_{y, z}^{\ell}$. For positive densities $f$ on $\mathbb{N}^{\Lambda_{y, z}^\ell}$, we define the Dirichlet form
\begin{equation*}
    \mathscr{D}_{y, z}^\ell(f):=\mathscr{D}_{y, z}^{1, \ell}(f)+\mathscr{D}_{y, z}^{2, \ell}(f)+\mathcal{I}_{y, z}^{1, \ell}(f)+\mathcal{I}_{y, z}^{2, \ell}(f),
\end{equation*} where
\begin{align*}
    &\mathscr{D}_{y, z}^{1, \ell}(f):=\frac{1}{2}\sum_{\substack{w, w'\in \Lambda_y^\ell\\ |w-w'|=1}}\int g(\xi_1(w))\left[f\big(\xi_1^{w, w'}, \xi_2\big)-f(\xi)\right]^2\de \bar\nu_\varphi^{\ell, \ell},
    \\&\mathscr{D}_{y, z}^{2, \ell}(f):=\frac{1}{2}\sum_{\substack{w, w'\in \Lambda_z^\ell\\ |w-w'|=1}}\int g(\xi_2(w))\left[f\big(\xi_1, \xi_2^{w, w'}\big)-f(\xi)\right]^2\de \bar\nu_\varphi^{\ell, \ell},
    \\&\mathcal{I}_{y, z}^{1, \ell}(f):=\frac{1}{2}\int g(\xi_1(y))\left[f\big(\xi_1^{y-}, \xi_2^{z+}\big)-f(\xi)\right]^2\de \bar\nu_\varphi^{\ell, \ell},
    \\&\mathcal{I}_{y, z}^{2, \ell}(f):=\frac{1}{2}\int g(\xi_2(z))\left[f\big(\xi_1^{y+}, \xi_2^{z-}\big)-f(\xi)\right]^2\de \bar\nu_\varphi^{\ell, \ell},
\end{align*}
and the integrals run on $\mathbb{N}^{\Lambda_{y, z}^\ell}$. This Dirichlet form corresponds to an interacting particle system on $\Lambda_{y, z}^\ell$ where each marginal process evolves as a zero-range process with rate $g$, and where particles are allowed to jump from the midpoint of one of the boxes to the midpoint of the other, and vice versa.

Now, given a density $f$ on $\Omega_N$ with respect to $\bar\nu_\varphi$, let $f_{y, z}^\ell$ denote the conditional expectation of $f$ given $\sigma\{\eta(w): w\in\Lambda_y^\ell\cup \Lambda_z^\ell\}$. For $y, z$ satisfying $|y-z|>2\ell$, we see that $f_{y, z}^\ell$ can be viewed as a density on $\mathbb{N}^{\Lambda_{y, z}^\ell}$. Note that, since the Dirichlet form is convex and since conditional expectation is an average,
\begin{equation*}\label{eq:single_box_bound}
    \mathscr{D}_{y, z}^{1, \ell}(f_{y, z}^\ell)\le \frac{1}{2}\sum_{\substack{w, w'\in \Lambda_y^\ell\\ |w-w'|=1}}\int_{\Omega_N}g(\eta(w))\left[f(\eta^{w, w'})-f(\eta)\right]^2\de \bar\nu_\varphi,
\end{equation*}
and an analogue bound holds for $\mathscr{D}_{y, z}^{2, \ell}$. Hence, we get
\begin{equation}\label{eq:bound_separate}
    \begin{split}&\sum_{x\in I_N^\eps}\sum_{\substack{|z|\le \eps N\\ |y-z|>2\ell}} \left\{\mathscr{D}_{x+y, x+z}^{1, \ell}\left(\sqrt{f_{x+y, x+z}^\ell}\right)+\mathscr{D}_{x+y, x+z}^{2, \ell}\left(\sqrt{f_{x+y, x+z}^\ell}\right)\right\}
    \\&\lesssim \ell\eps N\mathscr{D}_N^0(\sqrt{f}, \bar\nu_\varphi).
    \end{split}
\end{equation}

It remains to derive an upper bound for the Dirichlet forms that connect the two boxes. To that end, first note that, in the same way as we derived \eqref{eq:single_box_bound}, we have that
\begin{equation*}
    \mathcal{I}_{y, z}^{1, \ell}(f_{y, z}^\ell)\le \frac{1}{2}\int_{\Omega_N} g(\eta(y)) \left[f(\eta^{y, z})-f(\eta)\right]^2\de \bar\nu_\varphi.
\end{equation*}
Now let $\{w_k\}_{k=0, \ldots, |y-z|-1}$ be the shortest path from $y$ to $z$, namely the sequence of shortest length of consecutive sites such that the first one is the $y$, the last one is $z$ and the distance between two consecutive sites is equal to $1$. Defining inductively 
\begin{equation*}
    \begin{cases}\eta_{(0)}:=\eta,
    \\ \eta_{(k)}:=\eta_{(k-1)}^{w_{k-1}, w_k}, & k=1, \ldots, |y-z|-1,
    \end{cases} 
\end{equation*}
we write
\begin{equation*}
    f(\eta^{y, z})-f(\eta)=\sum_{k=0}^{|y-z|-1}\left[f\big(\eta_{(k+1)}\big)-f\big(\eta_{(k)}\big)\right].
\end{equation*}
By the Cauchy-Schwarz inequality, we see that
\begin{equation*}
    [f(\eta^{y, z})-f(\eta)]^2 \le  |y-z| \sum_{k=0}^{|y-z|-1}\left[f\big(\eta_{(k+1)}\big)-f\big(\eta_{(k)}\big)\right]^2,
\end{equation*}
and thus, using the fact that $\frac{\bar\nu_\varphi(\eta_{(k+1)})}{\bar\nu_\varphi(\eta_{(k)})}=\frac{g(\eta_{(k)}(w_k))}{g(\eta_{(k+1)}(w_{k+1}))}$ for each $k=0, \ldots, |y-z|-1$,
\begin{equation*}
    \mathcal{I}_{y, z}^{1, \ell}(f_{y, z}^\ell)\lesssim \eps N \sum_{k=0}^{|y-z|-1}\int_{\Omega_N} g(\eta_{(k)}(w_k))\left[f\big(\eta_{(k+1)}\big)-f\big(\eta_{(k)}\big)\right]^2\de \bar\nu_\varphi(\eta_{(k)}).
\end{equation*}
By repeating the same argument for $\mathcal{I}_{y, z}^{2, \ell}$, this yields
\begin{align*}
    &\sum_{x\in I_N^\eps}\sum_{\substack{|z|\le \eps N\\ |y-z|>2\ell}} \left\{\mathcal{I}_{x+y, x+z}^{1, \ell}\left(\sqrt{f_{x+y, x+z}^\ell}\right)+\mathcal{I}_{x+y, x+z}^{2, \ell}\left(\sqrt{f_{x+y, x+z}^\ell}\right)\right\}
    \\&\lesssim (\eps N)^3\mathscr{D}_N^0(\sqrt{f}, \bar\nu_\varphi).
\end{align*}
Finally, combining this bound with \eqref{eq:bound_separate}, we obtain
\begin{equation}\label{eq:dir_loc}
    N\mathscr{D}^0_{N}(\sqrt{f},\bar{\nu}_{\varphi})\ge \frac{C}{\eps^2N(2\eps N+1)} \sum_{x\in I_N^\eps} \sum_{\substack{|z|\le \eps N\\ |y-z|>2\ell}}  \mathscr{D}_{x+y, x+z}^\ell\left(\sqrt{f_{x+y, x+z}^\ell}\right)
\end{equation}
for some positive constant $C$ independent of $\eps, \ell$ and $N$.

\vspace{1em}
\textbf{Spectral Gap and Rayleigh Estimate.} Let now
\begin{equation*}
    \mathcal{X}_{y, z,  j}^\ell:=\Big\{\eta\in \mathbb{N}^{\Lambda_{y, z}^\ell}: \sum_{w\in\Lambda_{y}^\ell\cup \Lambda_z^\ell}\eta_w=j\Big\}
\end{equation*}
denote the set of particle configurations on $\Lambda_{y}^\ell\cup \Lambda_z^\ell$ with exactly $j$ particles. For $f:\mathcal{X}_{y, z, j}^\ell\to\mathbb{R}$, we introduce the generator  
\begin{align*}
    \mathcal{L}_{y, z, j}^\ell f(\eta):=\,& \sum_{\substack{w, w'\in\Lambda_y^\ell\\ |w-w'|=1}}g(\xi_1(w))\left[f(\xi_1^{w, w'}, \xi_2)-f(\xi)\right]
    \\&+\sum_{\substack{w, w'\in\Lambda_z^\ell\\ |w-w'|=1}}g(\xi_2(w))\left[f(\xi_1, \xi_2^{w, w'})-f(\xi)\right]
    \\&+g(\xi_1(y))\left[f\big(\xi_1^{y-}, \xi_2^{z+}\big)-f(\xi)\right]
    +g(\xi_2(z))\left[f\big(\xi_1^{y+}, \xi_2^{z-}\big)-f(\xi)\right] 
\end{align*} 
and the quadratic form
\begin{equation*}
    \mathscr{D}_{y, z, j}^\ell(f):=-2\left\langle f, \mathcal{L}_{y, z, j}^\ell f \right\rangle_{\bar\nu_{y, z, j}^\ell}
\end{equation*}
with respect to the measure $\bar\nu_{y, z, j}^\ell :=\bar\nu_\varphi\{\cdot\ |\ \eta\in\mathcal{X}_{y, z, j}^\ell\}$. Then, by conditioning on the event $\eta\in\mathcal{X}_{x+y, x+z, j}^\ell$, taking the maximum over $j\le 2(2\ell+1)A$ and using \eqref{eq:dir_loc}, we see that the supremum in \eqref{eq:supremum4} is bounded from above by
\begin{equation}\label{eq:supremum5}
\begin{split}
    &\frac{C}{M\eps ^2N(2\eps N+1)}\int_0^t \sum_{x \in I^\eps_N} G_s\left(\frac{x}{N}\right)\sum_{\substack{|z|\le \eps N\\ |y-z|>2\ell}}\mymax_{j\le 2(2\ell+1)A}\mysup_f \bigg\{\frac{M\eps ^2}{C} \times
    \\&\times \left\langle p_{x+y, x+z}^\ell, f\right\rangle_{\bar\nu_{x+y, x+z, j}}-\frac{1}{2}\mathscr{D}_{x+y, x+z, j}^\ell(\sqrt{f})\bigg\}\de s,
    \end{split}
\end{equation}
where the supremum now runs along all probability densities $f$ on $\mathcal{X}_{x+y, x+z,  j}^\ell$ with respect to $\bar\nu_{x+y, x+z, j}^\ell$. Note now that the function $p^\ell_{x+y, x+z}$ is \textit{not} centred, so we cannot directly apply the Rayleigh estimate. Nonetheless, since $\|p^\ell_{x+y, x+z}\|_\infty$ is finite, by \cite[Corollary A3.1.2]{kl99} the maximum in \eqref{eq:supremum5} satisfies
\begin{align}
    &\mymax_{j\le2(2\ell+1)A}\mysup_f \left\{\frac{M\eps^2}{C}\left\langle p_{x+y, x+z}^\ell, f\right\rangle_{\bar\nu_{x+y, x+z, j}} -\frac{1}{2}\mathscr{D}_{x+y, x+z,  j}^\ell(\sqrt{f})\right\}\nonumber
    \\\begin{split}&\le \frac{M^2\eps^4}{C} \mymax_{j\le 2(2\ell+1)A}\frac{\left\langle(-\mathcal{L}_{x+y, x+z, j}^\ell)^{-1} p_{x+y, x+z}^\ell, p_{x+y, x+z}^\ell\right\rangle_{\bar\nu_{x+y, x+z, j}^\ell}}{1-\frac{2M\eps ^2\|p_{x+y, x+z}^\ell\|_\infty}{\text{gap}(j, \ell, \ell)}}
    \\&\phantom{\le}+\mymax_{j\le2(2\ell+1)A}\frac{M\eps^2}{C}\langle p^\ell_{x+y, x+z}\rangle_{\bar\nu^\ell_{x+y, x+z, j}}.
    \end{split}\label{eq:maximum_twoB}
\end{align}
Here, $\text{gap}(j, \ell, \ell)$ denotes the spectral gap of the two-coordinate process generated by $\mathcal{L}_{y, z, j}^\ell$. Under \textsc{Assumption (SG)}, since we are restricting to $j\lesssim \ell$, by Lemma \ref{lemma:coupled_gap} we have that $\text{gap}(j, \ell, \ell)$ is bounded from below by a positive constant depending only on $\ell$ and $g$: hence, the first line of \eqref{eq:maximum_twoB} is bounded from above by
\begin{equation*}
    \frac{C(\ell)M^2\eps^4}{1-C(\ell)M\eps^2},
\end{equation*}
with $C(\ell)$ denoting a generic positive constant depending only on $\ell$ (and $g$).
As for the second line, note first that, for any $|z-y|<2\ell$,
\begin{align*}
    0&=\text{Var}_{\bar\nu_{y, z, j}}\left(\sum_{x\in \Lambda_y^\ell\cup\Lambda_z^\ell} \eta(x)\right)
    \\&=\sum_{x\in \Lambda_y^\ell\cup\Lambda_z^\ell} \text{Var}_{\bar\nu_{y, z, j}}(\eta(x))+\sum_{\substack{x, x'\in \Lambda_y^\ell\cup\Lambda_z^\ell \\ x\ne x'}}\text{Cov}_{\bar\nu_{y, z, j}}(\eta(x), 
    \eta(x')),
\end{align*}
and hence, since $\text{Cov}_{\bar\nu_{y, z, j}}(\eta(x), \eta(x'))$ is the same for all pairs $(x, x')$ with $x\ne x'$, we get
\begin{equation}\label{eq:negative_cov}
    \text{Cov}_{\bar\nu_{y, z, j}}(\eta(x), \eta(x'))\le 0
\end{equation}
for all $x\ne x'$ in $\Lambda_y^\ell\cup\Lambda_z^\ell$. Moreover, by the equivalence of ensembles, namely \cite[Corollary A2.1.7]{kl99}, choosing $\varphi_j:=\Phi\big(\frac{j}{2(2\ell+1)}\big)$ we have that
\begin{equation*}
    \text{Var}_{\bar\nu_{y, z, j}}(\eta(x))\le \text{Var}_{\bar\nu_{\varphi_j}}(\eta(x))+\frac{C}{\ell}
\end{equation*}
for some finite constant $C$. But then, we see that
\begin{align*}
    \langle p^\ell_{y, z}\rangle_{\bar\nu^\ell_{y, z, j}}&\le \langle |\bar\eta^\ell(y)-\bar\eta^\ell(z)|\rangle_{\bar\nu^\ell_{y, z, j}}
     \le \sqrt{\text{Var}_{\bar\nu_{y, z, j}}(\bar\eta^\ell(y)-\bar\eta^\ell(z))}
    \\&\le \frac{2}{\sqrt{2\ell+1}} \sqrt{\text{Var}_{\bar\nu_{y, z, j}}(\eta(y))}
    \le \frac{2}{\sqrt{2\ell+1}} \sqrt{\text{Var}_{\bar\nu_{\varphi_j}}(\eta(y))+\frac{C}{\ell}}
    \\&\lesssim \frac{1}{\sqrt{2\ell+1}},
\end{align*}
where in the third inequality we used $\bar\eta^\ell(y)-\bar\eta^\ell(z)=2\bar\eta^\ell(y)-\frac{j}{2(2\ell+1)}$ and \eqref{eq:negative_cov}, and in the last bound we applied Lemma \ref{lemma:bounded_moments}. Combining everything together, we finally get the upper bound 
\begin{align*}
    \eqref{eq:supremum4}&\le \frac{tC\eps N^2}{M\eps^2 N(2\eps N+1)} \left\{ \frac{C(\ell)M\eps ^4}{1-C(\ell)M\eps^2}+\frac{M\eps^2}{\sqrt{2\ell+1}}\right\}+\frac{C}{M}+\frac{tCN^{1-\theta}}{M}
    \\&\le \frac{tC(\ell)\eps^2}{1-C(\ell)M\eps^2}+\frac{tC}{\sqrt{\ell}}+\frac{C}{M}+\frac{tCN^{1-\theta}}{M}.
\end{align*}
Since $M$ is arbitrary, sending $N\to\infty$, then $\eps\to0$ and then $\ell\to\infty$ completes the proof. \end{proof}

\subsection{Replacement Lemma at the Boundary}\label{sec:replacement_boundary}
\begin{lemma}[Boundary Replacement Lemma]\label{lemma:repl_boundaries} Under the hypotheses of Theorem \ref{thm:main}, for any $t\in[0,T]$ and any continuous functions $G_1, G_2:[0, T]\to\mathbb{R}$, we have that
\begin{equation}
    \mylimsup_{\eps \to 0}\mylimsup_{N \to \infty} \expected_{\mu^N}\left[\left|\mathscr{R}_{N, \eps}^b(G_1, G_2, t, \eta)\right|\right]=0,
\end{equation}
where 
 \begin{equation*}
    \begin{split}
    \mathscr{R}_{N,\eps}^b(G_1, G_2, t, \eta):= \;&\int_0^t G_1(s)\left\{ \frac{1}{\eps N}\sum_{y=1}^{\eps N} g(\eta_s(y))-\Phi\left(\vec\eta^{\eps N}_s(1)\right)\right\}\de s
    \\&+\int_0^t G_2(s)\left\{\frac{1}{\eps N}\sum_{y=N-\eps N}^{N-1} g(\eta_s(y))-\Phi\left(\cev\eta^{\eps N}_s(N-1)\right)\right\}\de s,
    \end{split}
\end{equation*}
and $\vec\eta^{\eps N}_s(1)$ and $\cev\eta^{\eps N}_s(N-1)$ are defined in \eqref{eq:vec_eta}.
\end{lemma}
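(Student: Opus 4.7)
The plan is to mirror the proof of Lemma \ref{lemma:repl_bulk}, adapted to a box anchored at the boundary. By symmetry, we treat only the left-boundary term. Introducing an intermediate scale $\ell$ with $1\ll\ell\ll\varepsilon N$, we would first rewrite
\[
\frac{1}{\varepsilon N}\sum_{y=1}^{\varepsilon N} g(\eta(y)) \;=\; \frac{1}{\varepsilon N}\sum_{y=\ell+1}^{\varepsilon N-\ell}\frac{1}{2\ell+1}\sum_{|z-y|\le\ell}g(\eta(z)) \;+\; \mathscr{E}_{N,\varepsilon,\ell}(\eta),
\]
where $\mathscr{E}_{N,\varepsilon,\ell}$ is supported on $O(\ell)$ sites near $1$ and $\varepsilon N$ and has expectation of order $\ell/(\varepsilon N)$ by attractiveness (combining \textsc{Assumption (ND)}, \eqref{eq:stoch_domination} and Lemma \ref{lemma:attractiveness}) together with Lemma \ref{lemma:bounded_moments}. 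With $\widetilde V_\ell$ as in \eqref{eq:Vl_tilde}, the problem then reduces to showing that
\[
\frac{1}{\varepsilon N}\sum_{y=\ell+1}^{\varepsilon N-\ell}\tau_y\widetilde V_\ell(\eta_s) \;+\; \frac{1}{\varepsilon N}\sum_{y=\ell+1}^{\varepsilon N-\ell}\bigl[\Phi(\bar\eta_s^\ell(y)) - \Phi(\vec\eta_s^{\varepsilon N}(1))\bigr]
\]
integrates to zero against $G_1(s)$ in the appropriate limits, up to an $O(1/\ell)$ contribution from the equivalence of ensembles applied to $\widehat V_\ell$ \cite[Corollary A2.1.7]{kl99}.

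The first sum is controlled by a boundary-localised one-block estimate analogous to Lemma \ref{lemma:one_block}: the weight $\frac{1}{N}\sum_{x\in I_N^\varepsilon}\Delta G_s(x/N)$ there is simply replaced by $\frac{G_1(s)}{\varepsilon N}\sum_y$. Cutting off the event $\{\bar\eta^\ell(y)>A\}$ via attractiveness, moving from $\mu^N$ to the reference product measure $\bar\nu_\varphi$ (with $\varphi\ge\mymax\{\mysup_{x\in I_N}\bar\varphi_N(x),\alpha/\lambda,\beta/\delta\}$) through the entropy inequality -- justified by $H(\mu^N|\bar\nu_\varphi)\lesssim N$ via Appendix \ref{sec:app_entropy} -- applying Feynman-Kac together with the Dirichlet-form bound of Lemmas \ref{lemma:left_boundary}, \ref{lemma:right_boundary} and \ref{lemma:bulk}, localising $\mathscr{D}_N^0(\sqrt f,\bar\nu_\varphi)$ on the $\ell$-boxes centred at the boundary strip, and finally invoking \textsc{Assumption (SG)} and the Rayleigh estimate \cite[Theorem A3.1.1]{kl99}, produces an error of order $1/M + M\ell^3/N^2 + N^{1-\theta}/M$, which vanishes in the order $N\to\infty$, $\ell\to\infty$, $M\to\infty$. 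The restriction $\theta\ge1$ enters precisely to tame the $N^{1-\theta}/M$ contribution from the boundary Dirichlet form, exactly as in the bulk.

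The second sum is bounded by $g^*\cdot\frac{1}{\varepsilon N}\sum_y|\bar\eta^\ell(y)-\vec\eta^{\varepsilon N}(1)|$ through Lemma \ref{lemma:Phi_lips}. Writing $\vec\eta^{\varepsilon N}(1)\approx\frac{1}{\varepsilon N}\sum_{z=\ell+1}^{\varepsilon N-\ell}\bar\eta^\ell(z)$ up to an $O(\ell/\varepsilon N)$ error handled by attractiveness, this becomes $\frac{1}{(\varepsilon N)^2}\sum_{y,z}|\bar\eta^\ell(y)-\bar\eta^\ell(z)|$; the diagonal pairs with $|y-z|\le 2\ell$ contribute $O(\ell/\varepsilon N)$ by attractiveness, and the remaining pairs are exactly of the form handled by the two-block estimate (Lemma \ref{lemma:two_block}), only with the outer $x$-index anchored at $1$ rather than averaged over $I_N^\varepsilon$. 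The coupled two-box construction, the spectral gap bound of Lemma \ref{lemma:coupled_gap}, the non-centred Rayleigh estimate \cite[Corollary A3.1.2]{kl99}, and the negative-covariance/equivalence-of-ensembles variance bound all carry over unchanged. I expect the main obstacle to be a careful book-keeping of $\varepsilon$-powers: without the outer spatial average $\frac{1}{N}\sum_{x\in I_N^\varepsilon}$ that produced the crucial $\varepsilon N/N$ factor in \eqref{eq:supremum5}, the corresponding prefactor is now of order $1/(\varepsilon N)$, and one must verify that the resulting supremum still vanishes in the triple limit $N\to\infty$, $\varepsilon\to 0$, $\ell\to\infty$. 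Keeping $\ell\ll\varepsilon N$ throughout and exploiting the freedom to choose $M$ large is what closes the estimate.
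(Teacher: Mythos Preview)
Your plan is correct and follows exactly the route the paper itself takes: the paper's proof is simply ``retrace all the steps of the proof of Lemma~\ref{lemma:repl_bulk}, replacing the centred averages with the directed averages,'' and you have spelled out precisely that programme. Regarding your flagged $\varepsilon$-power concern: after localising the Dirichlet form you indeed find the small parameter in the Rayleigh estimate is $M\varepsilon$ rather than $M\varepsilon^2$, and the centred-part contribution becomes $C(\ell)M\varepsilon/(1-C(\ell)M\varepsilon)$ rather than $C(\ell)\varepsilon^2/(1-C(\ell)M\varepsilon^2)$, but this still vanishes under the limit order $N\to\infty$, $\varepsilon\to 0$, $\ell\to\infty$, $M\to\infty$, so the estimate closes without modification.
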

The proof of this lemma follows from an argument which is almost identical to the one given in the previous section, so we omit it. The reader is invited to retrace all the steps of the proof of Lemma \ref{lemma:repl_bulk}, replacing the centred averages with the directed averages, and will see that no additional argument is required.

\appendix
\section{Energy Estimate and Uniqueness of Weak Solutions}\label{sec:app_uniqueness}

\subsection{Energy Estimate}
In this section we prove Proposition \ref{prop:energy_estimate}. Our approach follows that of the proof of \cite[Lemma 5.7.2]{kl99}, and by the argument given therein, it suffices to show the following estimate. 
\begin{lemma}\label{lemma:energy_pre} Let $\{H_k\}_{k\ge1}$ be a sequence of functions dense in $C_c^{0, 1}([0, T]\times(0, 1))$. There exists $K_0<\infty$ such that, for each $m\ge1$ and $\eps>0$,
\begin{equation}\label{eq:energy_pregoal}
    \mylimsup_{\delta\to0}\mylimsup_{N\to\infty} \expected_{\mu^N}\left[\mymax_{1\le i\le m} \int_0^T U_N(\eps, \delta, H_i(s, \cdot), \eta_s)\de s\right]\le K_0,
\end{equation}
where, for $H\in C^1((0, 1))$, 
\begin{equation*}
    \begin{split}
    U_N(\eps, \delta, H, \eta):=&\;\sum_{x\in I_N^\eps} H\left(\frac{x}{N}\right)\frac{1}{\eps N}\left\{\Phi(\bar\eta^{\delta N}(x-\eps N))-\Phi(\bar\eta^{\delta N}(x+\eps N))\right\}
    \\&-2\sum_{x\in I_N^\eps} H\left(\frac{x}{N}\right)^2\frac{1}{\eps N} \sum_{|y-x|\le \eps N} \Phi(\bar\eta^{\delta N}(y)).
    \end{split}
\end{equation*}
\end{lemma}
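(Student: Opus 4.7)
The plan is to combine the entropy inequality with a Feynman--Kac variational argument, in the spirit of the energy estimates of \cite[Chapter 7]{kl99}, adapted to our non-stationary reference measure $\bar\nu_\varphi$.

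For each $i=1,\dots,m$, set $Z_i := \int_0^T U_N(\eps,\delta,H_i(s,\cdot),\eta_s)\de s$. The entropy inequality applied with parameter $BN$ (for some $B>0$ to be chosen), followed by $e^{a\,\mymax_i x_i}\le\sum_{i=1}^m e^{a x_i}$, yields
\begin{equation*}
    \expected_{\mu^N}\big[\mymax_{1\le i\le m}Z_i\big] \le \frac{H(\mu^N\,|\,\bar\nu_\varphi)}{BN} + \frac{\log m}{BN} + \frac{1}{BN}\,\mymax_{1\le i\le m}\log\expected_{\bar\nu_\varphi}\big[e^{BN Z_i}\big].
\end{equation*}
By Appendix \ref{sec:app_entropy}, $H(\mu^N|\bar\nu_N)\lesssim N$ implies $H(\mu^N|\bar\nu_\varphi)\lesssim N$, so the first two terms contribute $O(1/B)$ to $K_0$. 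Feynman--Kac (\cite[Lemma A.1]{bmns17}) then gives
\begin{equation*}
    \frac{1}{BN}\log\expected_{\bar\nu_\varphi}\big[e^{BN Z_i}\big] \le \int_0^T\mysup_{f}\bigg\{\langle U_N(\eps,\delta,H_i(s,\cdot),\cdot),f\rangle_{\bar\nu_\varphi} - \frac{N}{2B}\mathscr{D}^0_N(\sqrt f,\bar\nu_\varphi)\bigg\}\de s + \frac{CT}{BN^{\theta-1}},
\end{equation*}
where the supremum is over probability densities $f$ w.r.t.\ $\bar\nu_\varphi$. The $N^{1-\theta}$ correction comes from the discrepancy between the carré-du-champ of $\mathcal{L}^N$ and $\mathscr{D}^0_N$ at the boundary (Lemmas \ref{lemma:left_boundary}, \ref{lemma:right_boundary}), and vanishes as $N\to\infty$ since $\theta\ge 1$. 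The task thus reduces to bounding the supremum uniformly in $N$, $\eps$, $\delta$, $f$ and $H$.

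The core of the argument is to prove that, for any density $f$ and any $H\in C^1((0,1))$,
\begin{equation*}
    \langle U_N(\eps,\delta,H,\cdot),f\rangle_{\bar\nu_\varphi} \le \frac{N}{2B}\mathscr{D}^0_N(\sqrt f,\bar\nu_\varphi) + K_1,
\end{equation*}
with $K_1$ depending only on $\|H\|_\infty$, $\|H'\|_\infty$ and $B$. The two sums composing $U_N$ must be treated in tandem. For the gradient sum one first invokes the bulk replacement strategy of Lemmas \ref{lemma:one_block}--\ref{lemma:two_block} (equivalence of ensembles under $\bar\nu_\varphi$, combined with a large-density cutoff via attractiveness) to replace each $\Phi(\bar\eta^{\delta N}(y))$ by the empirical average $(2\delta N+1)^{-1}\sum_{|z-y|\le\delta N}g(\eta(z))$ up to an error vanishing as $\delta\to 0$. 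A discrete summation by parts in $x$ then produces a sum of the form $\sum_y(\text{discrete derivative of }H \text{ near }y/N)\cdot g(\eta(y))$, which can be written as a telescopic sum of nearest-neighbour currents $g(\eta(y))-g(\eta(y+1))$. Each such current, integrated against $f\,\de\bar\nu_\varphi$, is handled via the reversibility identity
\begin{equation*}
    \int g(\eta(y))F(\eta)\,\de\bar\nu_\varphi = \int g(\eta(y+1))F(\eta^{y+1,y})\,\de\bar\nu_\varphi,
\end{equation*}
a direct consequence of the product form of $\bar\nu_\varphi$. This rewrites the current as $\int g(\eta(y+1))[f(\eta^{y+1,y})-f(\eta)]\de\bar\nu_\varphi$, which factors as $(\sqrt f(\eta^{y+1,y})-\sqrt f(\eta))(\sqrt f(\eta^{y+1,y})+\sqrt f(\eta))$. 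Young's inequality $|ab|\le\tfrac{A}{2}a^2+\tfrac{1}{2A}b^2$, with $A$ taken proportional to $N/B$, splits each contribution into a piece absorbed by $\mathscr{D}^0_N(\sqrt f,\bar\nu_\varphi)$ and a residual quadratic piece bounded by $H(x/N)^2\int[g(\eta(y))+g(\eta(y+1))]f\,\de\bar\nu_\varphi$. The second sum of $U_N$, with its $-2H^2$ averaging of $\Phi$, is precisely calibrated to absorb this quadratic residue after one further application of the equivalence of ensembles.

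The principal obstacle lies in the bookkeeping of the previous paragraph: the averaged gradient $\Phi(\bar\eta^{\delta N}(x-\eps N))-\Phi(\bar\eta^{\delta N}(x+\eps N))$ spans $O(\eps N)$ bonds, so the Young parameter $A$ must be tuned so that summation over bonds reproduces the $N/(2B)$ coefficient available from Feynman--Kac while leaving the quadratic residue dominated by the $-2H^2\Phi$ term. A further subtlety is the need to work with $\bar\nu_\varphi$ rather than the invariant $\bar\nu_N$: this is what enables translation invariance and reversibility but imposes the transfer of hypotheses through Appendix \ref{sec:app_entropy} and produces the $N^{1-\theta}$ non-stationarity error that is the reason for the standing assumption $\theta\ge 1$.
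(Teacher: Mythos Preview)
Your overall plan---entropy inequality, Feynman--Kac, reversibility identity, Young's inequality with the quadratic residue absorbed by the $-2H^2\Phi$ term---matches the paper's, but the order of the two main steps is inverted, and this inversion creates a genuine gap. You apply entropy $+$ Feynman--Kac to $U_N$ and then propose to carry out the replacement $\Phi(\bar\eta^{\delta N})\to (2\delta N+1)^{-1}\sum g(\eta(\cdot))$ \emph{inside} the supremum over densities $f$, invoking ``a large-density cutoff via attractiveness''. But attractiveness is a dynamical statement: it says $\mu^N S(t)\le\bar\nu_N$, and this is what lets one control $\expected_{\mu^N}[\bar\eta^{\delta N}_s(x)\boldsymbol{1}\{\bar\eta^{\delta N}_s(x)>A\}]$ via Lemma~\ref{lemma:bounded_moments}. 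For an \emph{arbitrary} density $f$ with respect to $\bar\nu_\varphi$ there is no such bound---$f$ can concentrate on configurations with arbitrarily large local density---so the cutoff step fails and the equivalence of ensembles cannot be applied uniformly in $f$.

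The paper circumvents this by reversing the order: it first applies the bulk replacement lemma at the level of $\expected_{\mu^N}$, where attractiveness is legitimately available, to reduce $U_N$ to the functional $W_N$ in which each $\Phi(\bar\eta^{\delta N}(\cdot))$ has been replaced by a bare $g(\eta(\cdot))$. (The $\max_{1\le i\le m}$ is harmless here because $m$ is fixed and one bounds $\max_i|U_N-W_N|$ by the finite sum $\sum_i|U_N-W_N|$.) Only \emph{then} does it apply entropy $+$ Feynman--Kac to $W_N$; from that point the summation-by-parts/reversibility/Young argument you sketch is exactly \cite[Lemma~7.3]{kl99}, to which the paper simply refers. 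So your ingredients are correct, but the replacement must happen before the variational step, not after.
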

\begin{proof} 
By the bulk replacement lemma, namely Lemma \ref{lemma:repl_boundaries}, and the argument given in Section \ref{sec:characterisation}, it is not hard to see that it suffices to show that
\begin{equation}\label{eq:energy_goal}
    \mylimsup_{N\to\infty} \expected_{\mu^N}\left[\mymax_{1\le i\le m} \int_0^TW_N(\eps, H_i(s, \cdot), \eta_s)\de s\right]\le K_0,
\end{equation}
where, for $H\in C^1((0, 1))$, 
\begin{equation*}
    \begin{split}
    W_N(\eps, H, \eta):=&\;\sum_{x\in I_N^\eps} H\left(\frac{x}{N}\right)\frac{1}{\eps N}\left\{g(\eta(x-\eps N))-g(\eta(x+\eps N))\right\}
    \\&-2\sum_{x\in I_N^\eps} H\left(\frac{x}{N}\right)^2\frac{1}{\eps N} \sum_{|y-x|\le \eps N} g(\eta(y)).
    \end{split}
\end{equation*}
Let $\varphi\ge\mymax\{\mysup_{x\in I_N}\bar\varphi_N(x), \frac{\alpha}{\lambda}, \frac{\beta}{\delta}\}$: using the fact that $e^{\mymax_{1\le i\le m} a_i}\le\sum_{i=1}^m e^{a_i}$ and $\mylimsup_{N\to\infty}\frac{1}{N}\{a_N+b_N\}\le \mymax\{\mylimsup_{N\to\infty}\frac{1}{N}\mylog a_N, \mylimsup_{N\to\infty}\frac{1}{N}\mylog b_N\}$, by the same argument given in the proof of Lemma \ref{lemma:one_block}, it suffices to bound the expression
\begin{equation}\label{eq:energy_supremum}
\begin{split}
    &\mymax_{1\le i \le m}\mysup_f \left\{\int_0^T \int_{\Omega_N} W_N(\eps, H_i(s, \cdot), \eta)f(\eta)\de \bar\nu_\varphi-\frac{TN}{2}\mathscr{D}_N^0\left(\sqrt{f}, \bar\nu_\varphi\right) \right\}
    \\&+C+TCN^{1-\theta},
    \end{split}
\end{equation}
where the supremum runs along all probability densities $f$ with respect to $\bar\nu_\varphi$. From here, the conclusion follows from the exact same argument given in the proof of \cite[Lemma 7.3]{kl99}. \end{proof}

\subsection{Uniqueness of Weak Solutions}
Finally, we prove Lemma \ref{lemma:uniqueness}. Throughout, we denote by $C_0^2([0, 1])$ the set of functions $h$ in $C^2([0, 1])$ such that $h(0)=h(1)=0$. We start by recalling some technical results, namely \cite[Lemmas 7.1 and 7.2]{bdgn20}, which will be used in what follows. We refer the reader to  \cite{bdgn20} for the proofs. 

\begin{lemma}\label{lemma:parabolic_uniqueness}
Suppose that: \begin{enumerate}[i)] 
\item $a=a(t, u)$ is a positive $C^{2,2}([0,T]\times [0,1])$ function, 
\item $b=b(t, u)$ is a function defined on $[0, T]\times[0, 1]$ such that $b(\cdot, 0)$ and $b(\cdot, 1)$ are in $C^2([0,T])$,
\item $h=h(u)$ is a  $C^2_0([0,1])$ function.
\end{enumerate}
Then, for each $t \in (0,T]$, the problem with Robin boundary conditions 
\begin{equation}\label{eq:parabolic_pde}
    \left\{
    \begin{array}{ll} 
    \partial_s\phi + a\Delta \phi  =0 &  \mbox{ for } (s,u) \in [0,t)\times(0,1),
    \\\partial_u\phi(s, 0)=b(s, 0)\,\phi(s, 0) & \mbox{ for } s \in [0,t),
    \\ \partial_u\phi(s, 1) = -b(s, 1)\,\phi(s, 1) & \mbox{ for } s \in [0,t),
    \\ \phi(t, u)  = h(u) &\mbox{ for } u \in (0,1)
    \end{array}\right.
\end{equation}
has a unique solution $\phi_0$ in $C^{1,2}([0,t]\times[0,1])$. Moreover, if $0 \le h\le1$, then 
\begin{equation*}
    0\le \phi_0(s, u) \le 1  \ \mbox{ for } (s,u) \in [0,t] \times [0,1]. 
\end{equation*}
\end{lemma}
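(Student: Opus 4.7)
The strategy is to convert the backward problem into a standard forward parabolic IBVP by time-reversal, invoke classical linear parabolic theory to obtain a unique $C^{1,2}$ solution, and then derive the two-sided bound via a barrier argument adapted to the Robin boundary conditions.

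\medskip

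Setting $\tau := t-s$, $\tilde\phi(\tau, u) := \phi(t-\tau, u)$, $\tilde a(\tau, u) := a(t-\tau, u)$, $\tilde b(\tau, u) := b(t-\tau, u)$, the function $\tilde\phi$ satisfies the forward IBVP
\begin{equation*}
\left\{\begin{array}{ll}
\partial_\tau \tilde\phi = \tilde a\,\Delta \tilde\phi & \mbox{on } (0, t] \times (0, 1), \\
\partial_u \tilde\phi(\tau, 0) = \tilde b(\tau, 0)\,\tilde\phi(\tau, 0) & \mbox{on } (0, t], \\
\partial_u \tilde\phi(\tau, 1) = -\tilde b(\tau, 1)\,\tilde\phi(\tau, 1) & \mbox{on } (0, t], \\
\tilde\phi(0, u) = h(u) & \mbox{on } [0, 1].
\end{array}\right.
\end{equation*}
Because $\tilde a$ is $C^{2,2}$ and uniformly positive on the compact cylinder, $\tilde b(\cdot,0)$ and $\tilde b(\cdot,1)$ are $C^2$ in time, and $h\in C^2_0([0,1])$ makes both sides of each Robin condition vanish at $\tau=0$ (so the natural corner compatibility is met), standard linear parabolic theory for third-type boundary conditions — as developed, e.g., in the monograph of Lady\v{z}enskaja, Solonnikov and Ural'ceva — yields a unique classical solution $\tilde\phi \in C^{1,2}([0,t] \times [0,1])$. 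Reverting to $\phi_0(s, u) := \tilde\phi(t-s, u)$ gives existence and uniqueness of a $C^{1,2}$ solution of \eqref{eq:parabolic_pde}.

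\medskip

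To prove the bound, I would use an auxiliary function. Fix $\varepsilon>0$ and set $w_\varepsilon := \tilde\phi + \varepsilon(1+\tau)$. I claim $w_\varepsilon > 0$ on $[0,t]\times[0,1]$, whence $\tilde\phi\ge 0$ by sending $\varepsilon\to 0$. Let $(\tau_*,u_*)$ be a minimiser of $w_\varepsilon$. Since $w_\varepsilon(0,\cdot)\ge \varepsilon>0$, we have $\tau_*>0$. If $u_*\in (0,1)$, then $\partial_\tau w_\varepsilon\le 0$ and $\Delta w_\varepsilon\ge 0$ at $(\tau_*,u_*)$, while the PDE gives $\partial_\tau w_\varepsilon - \tilde a\,\Delta w_\varepsilon = \varepsilon>0$, a contradiction. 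If $u_*=0$, minimality forces $\partial_u w_\varepsilon(\tau_*,0)\ge 0$, whereas the Robin condition yields
\begin{equation*}
\partial_u w_\varepsilon(\tau_*,0) \;=\; \tilde b(\tau_*,0)\bigl(w_\varepsilon(\tau_*,0) - \varepsilon(1+\tau_*)\bigr),
\end{equation*}
so $\tilde b(\tau_*,0)\ge 0$ forces $w_\varepsilon(\tau_*,0)\ge \varepsilon(1+\tau_*)>0$; the case $u_*=1$ is symmetric. Applying the same argument to $1-\tilde\phi$ (whose induced Robin right-hand side is $-\tilde b\le 0$) delivers the upper bound $\tilde\phi\le 1$.

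\medskip

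\textbf{Main obstacle.} The crux lies in Step 3: unlike for Dirichlet data, the Robin boundary conditions leave the lateral boundary accessible to minima, so the interior weak maximum principle has to be combined with a sign-sensitive boundary analysis exploiting the non-negativity of $\tilde b$. This sign condition is not stipulated in the lemma but will be automatic in the only application, to the uniqueness of weak solutions of \eqref{eq:pde_zr}, where $\tilde b$ is built out of $\Phi$ and the non-negative boundary parameters $\alpha,\lambda,\beta,\delta$. Existence and uniqueness are then a routine application of off-the-shelf Schauder theory.
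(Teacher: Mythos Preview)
The paper does not prove this lemma; it merely quotes \cite[Lemma~7.1]{bdgn20} and refers the reader there. Your approach --- time-reversal to a forward problem, classical linear parabolic existence/uniqueness theory, and a barrier/maximum-principle argument for the two-sided bound --- is the standard route and is sound in outline.

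Two points deserve tightening. First, your compatibility claim is not quite right: $h\in C^2_0([0,1])$ only means $h(0)=h(1)=0$, so the \emph{right-hand} sides of the Robin conditions vanish at $\tau=0$, but the \emph{left-hand} sides $\partial_u h(0)$, $\partial_u h(1)$ need not. The first-order corner compatibility actually required is $h'(0)=0=h'(1)$, which is not guaranteed by $C^2_0$. This is a (mild) tension already present in the lemma as stated and does no harm in the application, but you should not claim compatibility is automatic. Second, in your barrier argument the case $\tilde b(\tau_*,0)=0$ slips through: then $\partial_u w_\varepsilon(\tau_*,0)=0$ is consistent with a boundary minimum, and your conclusion $w_\varepsilon(\tau_*,0)\ge\varepsilon(1+\tau_*)$ does not follow from that line. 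The fix is easy: since $\tilde\phi\in C^{1,2}$ up to the boundary, the PDE extends there by continuity; at a boundary minimum with $\partial_u w_\varepsilon=0$ the second-order necessary condition gives $\partial_u^2 w_\varepsilon\ge 0$, and together with $\partial_\tau w_\varepsilon\le 0$ this still contradicts $\partial_\tau w_\varepsilon-\tilde a\,\Delta w_\varepsilon=\varepsilon>0$.

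Your observation that the bound requires $b\ge 0$, a hypothesis not stated in the lemma but satisfied in the only application (where $b(s,0)=\tilde\kappa\lambda\ge 0$ and $b(s,1)=\tilde\kappa\delta\ge 0$), is correct and worth recording.
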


\begin{lemma}\label{lemma:parabolic_bound}
Let $\phi_0$ be the unique solution of the parabolic equation \eqref{eq:parabolic_pde}. There exists a positive constant $C=C(b,h)$ such that 
\begin{equation*}
    \int_0^t\int_0^1 a(s, u)(\Delta \phi_0(s, u))^2\de u\de s \le C(b,h).
\end{equation*}
\end{lemma}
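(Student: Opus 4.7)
\textbf{Proof proposal for Lemma \ref{lemma:parabolic_bound}.} The natural strategy is an energy-type computation obtained by testing the equation against $\Delta\phi_0$ itself. Since $\partial_s\phi_0 = -a\,\Delta\phi_0$, we have
\begin{equation*}
    \int_0^t\!\!\int_0^1 a(s,u)(\Delta\phi_0(s,u))^2\de u\de s \;=\; -\int_0^t\!\!\int_0^1 (\Delta\phi_0)(\partial_s\phi_0)\de u\de s,
\end{equation*}
so the whole task reduces to bounding the right-hand side. Integration by parts in $u$ gives
\begin{equation*}
    \int_0^1 (\Delta\phi_0)(\partial_s\phi_0)\de u \;=\; \bigl[\partial_u\phi_0\,\partial_s\phi_0\bigr]_{u=0}^{u=1} \;-\; \tfrac{1}{2}\,\tfrac{d}{ds}\!\int_0^1 (\partial_u\phi_0(s,u))^2\de u.
\end{equation*}
The time derivative integrates out, leaving $\tfrac12\int_0^1 (h'(u))^2\de u -\tfrac12\int_0^1(\partial_u\phi_0(0,u))^2\de u$; the first term is bounded by a constant depending only on $h$ (recall $h\in C_0^2$), while the second has the favourable sign.

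The remaining job is to control the two boundary contributions $\int_0^t \partial_u\phi_0(s,j)\,\partial_s\phi_0(s,j)\de s$ for $j=0,1$. Here the Robin boundary conditions are crucial: substituting $\partial_u\phi_0(s,0)=b(s,0)\phi_0(s,0)$ and $\partial_u\phi_0(s,1)=-b(s,1)\phi_0(s,1)$ converts each boundary integrand into $\pm\tfrac{b(s,j)}{2}\tfrac{d}{ds}[\phi_0(s,j)^2]$. A second integration by parts in $s$ transfers the time derivative onto $b$, producing
\begin{equation*}
    \pm\tfrac{1}{2}\Bigl\{\bigl[b(s,j)\phi_0(s,j)^2\bigr]_0^t - \int_0^t \partial_s b(s,j)\,\phi_0(s,j)^2\de s\Bigr\}.
\end{equation*}
Since $h(0)=h(1)=0$, the evaluation at $s=t$ vanishes; the evaluation at $s=0$ and the residual integral are then controlled by $\|b\|_{C^2([0,T];\{0,1\})}\cdot \sup_{s,u}|\phi_0(s,u)|^2$.

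It thus remains to bound $\|\phi_0\|_\infty$ in terms of $h$ and $b$. This follows from the maximum principle for the parabolic problem \eqref{eq:parabolic_pde}, exactly as in the proof of Lemma \ref{lemma:parabolic_uniqueness}: by linearity applied to $h/\|h\|_\infty$ (assuming $h$ non-trivial), the statement $0\le \phi\le 1$ when $0\le h\le 1$ upgrades to $\|\phi_0\|_\infty\le \|h\|_\infty$. Combining all the bounds yields a constant $C=C(b,h)$ of the required form. The main delicate point in the argument is the handling of the boundary terms: one must be careful that all the boundary integrals can be rewritten as total time derivatives plus lower order terms, which only works thanks to the specific Robin structure and the hypothesis that $b(\cdot,0),b(\cdot,1)\in C^2([0,T])$ (ensuring $\partial_s b$ exists and is bounded) together with $h(0)=h(1)=0$ (killing the unwanted terminal boundary values).
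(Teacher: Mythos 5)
The paper itself does not prove this lemma: it is quoted from \cite[Lemma 7.2]{bdgn20} and the reader is referred there, so there is no internal proof to compare against. Your energy argument --- writing $a(\Delta\phi_0)^2=-(\Delta\phi_0)(\partial_s\phi_0)$, integrating by parts in space, converting the boundary terms via the Robin conditions into total time derivatives of $\phi_0(s,j)^2$, and integrating by parts in time --- is the standard route to such an estimate, and the computation is correct where it is carried out: the terminal term produces $\tfrac12\|h'\|_{L^2([0,1])}^2$, the initial term has the favourable sign, and the boundary contributions are controlled by $h(0)=h(1)=0$, $\|b\|_{C^1}$ and $\|\phi_0\|_\infty^2$.

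There is, however, one genuine gap: the sup-norm bound. Lemma \ref{lemma:parabolic_uniqueness} only asserts $0\le\phi_0\le1$ when $0\le h\le1$, and the rescaling $h/\|h\|_\infty$ lands in $[0,1]$ only when $h\ge0$. Since the Robin condition is not invariant under adding constants, you also cannot shift a sign-changing $h$ into $[0,1]$ and subtract the constant afterwards. To conclude $\|\phi_0\|_\infty\le C(h)$ for general $h\in C_0^2([0,1])$ you need either to write $h$ (after mollifying $h^{\pm}$) as a difference of nonnegative admissible data and use linearity, or to invoke the parabolic maximum principle with Robin boundary conditions directly, which is what underlies Lemma \ref{lemma:parabolic_uniqueness} in \cite{bdgn20} and does give $\|\phi_0\|_\infty\le\|h\|_\infty$ when $b\ge0$. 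Either fix is routine, but as written the step is not justified. A secondary, technical remark: your integrations by parts use the mixed derivative $\partial_s\partial_u\phi_0$ and $\partial_s b$, whereas Lemma \ref{lemma:parabolic_uniqueness} only guarantees $\phi_0\in C^{1,2}$; with $a\in C^{2,2}$ and $h\in C_0^2$ the additional regularity is available from parabolic theory (or by an approximation argument), but it should at least be acknowledged.
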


The lemma that follows is in fact a generalisation of \cite[Lemma 7.3]{bdgn20}, so we present its proof.

\begin{lemma}\label{lemma:convergent_sequence}
Let $a$ be a non-negative, bounded, measurable function on $[0,T] \times [0,1]$. There exists a sequence $\{a_k\}_{k\ge0}$ of positive functions in $C^{\infty}([0,T]\times [0,1])$ such that, for any $\omega\in L^2([0,T]\times[0,1])$,
\begin{equation*}
    \frac{1}{k}\le a_k \le \|a\|_{L^{\infty}}+\frac{1}{k} \ \ \text{and } \ \ \left\|\omega\;\frac{a-a_k}{\sqrt{a_k}}\right\|_{L^2([0,T]\times[0,1])} \to 0, 
\end{equation*}
as $k\to\infty$.
\end{lemma}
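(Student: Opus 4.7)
The plan is to build $a_k$ by mollifying $a$ and adding a $1/k$ shift to secure positivity. Extend $a$ from $[0,T]\times[0,1]$ to $\mathbb{R}^2$ by zero extension (preserving non-negativity and $\|a\|_{L^\infty}$), let $\rho_k$ be a standard non-negative $C^\infty$ mollifier with unit mass supported in a ball of radius $1/k$, and set $\tilde a_k := \rho_k * a$ restricted to $[0,T]\times[0,1]$. By classical mollifier theory, $\tilde a_k$ is smooth, $0 \le \tilde a_k \le \|a\|_{L^\infty}$, and $\tilde a_k \to a$ both pointwise almost everywhere (Lebesgue differentiation) and in $L^p$ for every finite $p$. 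Define $a_k := \tilde a_k + \tfrac{1}{k}$; the requested pointwise inequalities $\tfrac{1}{k}\le a_k \le \|a\|_{L^\infty}+\tfrac{1}{k}$ are then immediate.

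For the $L^2$-convergence, I would first establish the pointwise-a.e.\ limit $(a-a_k)/\sqrt{a_k}\to 0$: on $\{a>0\}$ one has $a_k\to a>0$, and on $\{a=0\}$ the quantity equals $-\sqrt{a_k}$, which vanishes since $\tilde a_k\to 0$ a.e.\ on that set. Then I would decompose the domain into a good set $G_k := \{a_k\ge a/2\}$ and a bad set $B_k := G_k^c$. On $G_k$, the inequality $|a-a_k|\le a+a_k\le 3a_k$ yields the uniform-in-$k$ bound $(a-a_k)^2/a_k\le 9\, a_k \le 9(\|a\|_{L^\infty}+1)$; combined with $\omega^2\in L^1$, dominated convergence delivers $\int_{G_k}\omega^2(a-a_k)^2/a_k\,dt\,du\to 0$.

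The main obstacle is the bad-set integral. On $B_k$, the condition $a_k<a/2$ forces both $a>2/k$ and $a-\tilde a_k>a/2$, whence $a^2\le 4(a-\tilde a_k)^2$; combined with $(a-a_k)^2\le a^2$ on $B_k$ (since there $0<a_k<a$) and $a_k\ge 1/k$, this gives
\begin{equation*}
\int_{B_k}\omega^2\,\frac{(a-a_k)^2}{a_k}\,dt\,du \;\le\; 4k\int_{B_k}\omega^2(a-\tilde a_k)^2\,dt\,du.
\end{equation*}
The factor $k$ must then be absorbed. The cleanest way is to refine the construction via Lusin's theorem: for each $k$ pick a compact $K_k\subseteq [0,T]\times[0,1]$ with $|K_k^c|\to 0$ on which $a$ is continuous, then smoothly modify $\tilde a_k$ on $K_k^c$ via a cutoff so that $a_k$ is bounded below by, say, $\|a\|_{L^\infty}/2$ there, while retaining the pointwise upper bound $\|a\|_{L^\infty}+1/k$. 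This renders $(a-a_k)^2/a_k$ uniformly bounded on $B_k\subseteq K_k^c$, after which $\int_{B_k}\omega^2 (a-a_k)^2/a_k\le C\int_{K_k^c}\omega^2 \to 0$ by absolute continuity of $\omega^2\,dt\,du$ with respect to Lebesgue measure, closing the argument.
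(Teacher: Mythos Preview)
Your mollification-plus-good/bad-set strategy differs from the paper's. The paper instead splits into $\mathcal A=\{a>0\}$ and $\mathcal A^c$, factors $(a-a_k)/\sqrt{a_k}=(\sqrt a/\sqrt{a_k}-1)(\sqrt a+\sqrt{a_k})$ on $\mathcal A$, and builds $a_k$ by a diagonal scheme: for each $k$ choose smooth $\xi_m^k\to a+\tfrac1k$ in $L^2$ with $\tfrac1k\le\xi_m^k\le\|a\|_{L^\infty}+\tfrac1k$, then pick $m_k$ so that $\sqrt a/\sqrt{\xi_{m_k}^k}$ is sufficiently close to $\sqrt a/\sqrt{a+1/k}$, the latter being handled by dominated convergence. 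No good/bad decomposition or Lusin step appears.

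Your argument on $G_k$ is correct, and you rightly flag the bad-set integral as the obstacle. The Lusin repair, however, has a genuine gap. After modifying $a_k$ on $K_k^c$ so that $a_k\ge\|a\|_{L^\infty}/2$ there, every point of $K_k^c$ satisfies $a_k\ge a/2$, so $K_k^c\subseteq G_k$ and hence $B_k\subseteq K_k$ --- the \emph{opposite} of the inclusion $B_k\subseteq K_k^c$ you assert. The implicit claim seems to be that on $K_k$ the unmodified $a_k=\rho_k*a+\tfrac1k$ is uniformly close to $a$ (which would force $K_k\subseteq G_k$), but Lusin's theorem only delivers continuity of the \emph{restriction} $a|_{K_k}$; the convolution $\rho_k*a$ averages $a$ over balls that reach into $K_k^c$, where $a$ need not be close to its values on $K_k$, so uniform convergence of $\rho_k*a$ on $K_k$ fails in general. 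Nothing, then, prevents $B_k\cap K_k$ from carrying the full bad-set mass, and your final bound $\int_{B_k}\omega^2(a-a_k)^2/a_k\le C\int_{K_k^c}\omega^2$ is unjustified. One way to rescue the idea is to mollify not $a$ itself but a continuous Tietze extension of $a|_{K_k}$ with values in $[0,\|a\|_{L^\infty}]$; uniform convergence on $K_k$ is then genuine and yields $B_k\cap K_k=\varnothing$ once the mollification error is below $1/k$.
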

\begin{proof}
To ease the notation, throughout this proof we will write $L^2$ for $L^2([0, T]\times[0, 1])$. We start by observing that, for any positive sequence $\{a_k\}_{k\ge0}$, we have that
\begin{equation}\label{eq:bound_A}
    \left\|\omega\,\frac{a-a_k}{\sqrt{a_k}}\right\|_{L^2}\le \left\|\omega\,\frac{a-a_k}{\sqrt{a_k}}\boldsymbol{1}_{\mathcal{A}}\right\|_{L^2}+\left\|\omega\,\sqrt{a_k}\,\boldsymbol{1}_{\mathcal{A}^c}\right\|_{L^2},
\end{equation}
where $\mathcal{A}:=\{(s,u)\in[0, T]\times[0, 1]: a(s,u)>0\}$. We will construct a suitable sequence $\{a_k\}_{k\ge0}$, and we will treat the integrals on $\mathcal A$ and $\mathcal A^c$ separately. 

Note first that, on the set $\mathcal A$, we have that
\begin{equation*}
    \left|\omega\left\{\frac{\sqrt{a}}{\sqrt{a+1/k}} - 1\right\}\boldsymbol{1}_{\mathcal{A}} \right|\to 0 \ \ \text{as } k \to \infty\ \ \ \text{and } \ \ \left|\omega\left\{\frac{\sqrt{a}}{\sqrt{a+1/k}} - 1\right\}\boldsymbol{1}_{\mathcal{A}} \right|\le 2|\omega|.
\end{equation*}
Thus, we can use the dominated convergence theorem to see that, for all $\eps>0$, there exists $k_0\in\mathbb{N}$ such that
\begin{equation}\label{eq:convergent_integral_1}
    \left\|\omega\left\{\frac{\sqrt{a}}{\sqrt{a+1/k}} - 1\right\}\boldsymbol{1}_{\mathcal{A}} \right\|_{L^2} <\frac{\eps}{2} \ \ \ \text{for all }k\ge k_0.
\end{equation}
Let us now take a step towards constructing the sequence $\{a_k\}_{k\ge0}$. To this end, for each $k\ge k_0$ consider a sequence
$\{\xi_m^k\}_{m\ge 0}$ in $C^{\infty}([0,T]\times[0,1])$ such that $\xi_m^k \to a+\frac{1}{k}$ in $L^2$ as $m\to\infty$. Since $a+\frac{1}{k}\ge\frac{1}{k}$, we can assume that $ \frac{1}{k}\le \xi_m^k\le \|a\|_{\infty}+ \frac{1}{k}$ for all $m$. Using only this assumption, we obtain the inequality
\begin{equation*}
    \left|\omega\left\{\frac{\sqrt{a}}{\sqrt{\xi_m^k}} - \frac{\sqrt{a}}{\sqrt{a+1/k}}\right\}\boldsymbol{1}_{\mathcal{A}} \right|\le \frac{k^{\frac{3}{2}}}{2}\sqrt{\|a\|_{\infty}}\;\,|\omega|\,\left|\xi_m^k-\left(a+\frac{1}{k}\right)\right|.
\end{equation*}
Here, the set $\mathcal{A}$ and the function $\omega$ do not play an essential role; we only include them in view of what follows. Since, for each $k\ge k_0$, $\xi_m^k \to a+\frac{1}{k}$ in $L^2$ as $m\to\infty$, there exists an increasing sequence $\{m_k\}_{k\ge k_0}$ such that
\begin{equation}\label{eq:convergent_integral_2}
    \left\|\omega\left\{\frac{\sqrt{a}}{\sqrt{\xi_{m_k}^k}} - \frac{\sqrt{a}}{\sqrt{a+1/k}}\right\}\boldsymbol{1}_{\mathcal{A}} \right\|_{L^2} <\frac{\eps}{2}
\end{equation}
for all $k\ge k_0$. Moreover, it is possible to choose the sequence $\{m_k\}_{k\ge k_0}$ in such a way that
\begin{equation}\label{eq:convergent_sequence}
    \left|\left\{\xi_{m_k}^k -\frac{1}{k}\right\}\boldsymbol{1}_{\mathcal{A}^c} \right|\to 0 \ \ \ \text{as } k\to \infty,
\end{equation}
because, for all $k\ge k_0$, $\|\{\xi_{m}^k -1/k\}\boldsymbol{1}_{\mathcal{A}^c} \|_{L^2}\to0$ as $m\to\infty$.

Finally, for each $k$, we set $a_k := \xi_{m_k}^k$, and we prove that the both integrals on the right-hand side of \eqref{eq:bound_A} tend to zero as $k\to \infty$. To this end, we apply the Cauchy–Schwarz inequality, sum and subtract $\frac{1}{k}$, and obtain
\begin{equation*}
    \left\|\omega\,\sqrt{a_k}\,\boldsymbol{1}_{\mathcal{A}^c}\right\|_{L^2}\leq \left\|\omega\right\|_{L^2}\left\|\omega\,\left\{a_k-\frac{1}{k}\right\}\,\boldsymbol{1}_{\mathcal{A}^c}\right\|_{L^2}+\frac{\left\|\omega\right\|_{L^2}^2}{k}.
\end{equation*}
Since $\omega \in L^2$, the last term above converges to zero as $k\to\infty$. To show that the other term also vanishes as $k\to \infty$, we use \eqref{eq:convergent_sequence} together with the dominated convergence theorem.

Thus, to conclude this proof, we only need to prove the convergence of $\big\|\omega\,\frac{a-a_k}{\sqrt{a_k}}\boldsymbol{1}_{\mathcal{A}}\big\|_{L^2}$ to zero as $k\to \infty.$ But now, note that
\begin{equation*}
\begin{split}
    &\left\|\omega\,\frac{a-a_k}{\sqrt{a_k}}\boldsymbol{1}_{\mathcal{A}}\right\|_{L^2}=\left\|\omega\,\left\{\frac{\sqrt{a}}{\sqrt{a_k}}-1\right\}\big(\sqrt{a}+\sqrt{a_k}\,\big)\boldsymbol{1}_{\mathcal{A}}\right\|_{L^2}\\&\le C(a)\left\{\left\|\omega\,\left\{\frac{\sqrt{a}}{\sqrt{a_k}}-\frac{\sqrt{a}}{\sqrt{a+1/k}}\right\} \boldsymbol{1}_{\mathcal{A}}\right\|_{L^2}+\left\|\omega\,\left\{\frac{\sqrt{a}}{\sqrt{a+1/k}}-1\right\}\boldsymbol{1}_{\mathcal{A}}\right\|_{L^2}\right\}.
    \end{split}
\end{equation*}
Hence, by \eqref{eq:convergent_integral_1} and \eqref{eq:convergent_integral_2} with $a_k=\xi_{m_k}^k$, we have that 
\begin{equation*}
    \left\|\omega\,\frac{a-a_k}{\sqrt{a_k}}\boldsymbol{1}_{\mathcal{A}}\right\|_{L^2}<C(a)\eps
\end{equation*}
for all $k\ge k_0$, which completes the proof.
\end{proof}

\begin{proof}[Proof of Lemma \ref{lemma:uniqueness}]
Suppose that $\rho^1_t(u)$ and $\rho^2_t(u)$ are both weak solutions of \eqref{eq:pde_zr} with the same initial condition $\gamma$, and let
\begin{equation*}
    \omega_t(u) := (\rho_t^1 - \rho_t^2)(u).
\end{equation*}
From \eqref{eq:intergal_pde} for $\rho^1$ and $\rho^2$, subtracting these equations and putting some terms in evidence, we obtain 
\begin{equation}\label{eq:omega_formulation} \begin{split}
    \langle \omega_t , G_t\rangle =&\; \int_0^t \big\{\langle \omega_s, \partial_s G_s \rangle + \big\langle (\Phi(\rho_s^1)-\Phi(\rho_s^2)), \Delta G_s\big\rangle\big\}\de s
    \\& -\int_0^t (\Phi(\rho_s^1)-\Phi(\rho_s^2))(1)\,\big\{\partial_uG_s(1) +\tilde\kappa \delta G_s(1)\big\}\de s
    \\&+\int_0^t (\Phi(\rho_s^1)-\Phi(\rho_s^2))(0)\,\big\{ \partial_uG_s(0)- \tilde\kappa \lambda G_s(0)\big\} \de s.
    \end{split}
\end{equation}
By Lemma \ref{lemma:Phi_lips}, we have that $|(\Phi(\rho_s^1)-\Phi(\rho_s^2))(u)|\leq g^*|\omega_s(u)|$ for all $u\in[0,1]$ and $s\in[0, T]$. Then, we can define the bounded function 
\begin{equation*}
    v_s(u):=\frac{(\Phi(\rho_s^1)-\Phi(\rho_s^2))(u)}{(\rho_s^1-\rho_s^2)(u)}\ \ \text{ for }u\in[0,1]\text{ and }s\in[0, T],
\end{equation*}
so that
\begin{equation*}
    (\Phi(\rho_s^1)-\Phi(\rho_s^2))(u) = v_s(u)\,\omega_s(u) \ \ \text{ for }u\in[0,1]\text{ and }s\in[0, T].
\end{equation*}
Using the function $v_t$, we can rewrite  \eqref{eq:omega_formulation} as
\begin{equation}\label{eq:omega_formulation_2}
    \begin{split}
    \langle \omega_t , G_t\rangle =&\; \int_0^t \langle \omega_s, \partial_s G_s+  v_s \Delta G_s\rangle\de s
    \\& - \int_0^t  v_s(1)\,\omega_s(1)\,\big\{\partial_uG_s(1) +\tilde\kappa \delta G_s(1)\big\}\de s
    \\&+\int_0^t v_s(0)\,\omega_s(0) \,\big\{ \partial_uG_s(0)- \tilde\kappa \lambda G_s(0)\big\} \de s.
    \end{split}
\end{equation}
To estimate the integrals in \eqref{eq:omega_formulation_2}, we employ a suitable test function, namely the solution to the parabolic equation \eqref{eq:parabolic_pde}. Since the function $\Phi$ is strictly increasing, it follows that $v$ is non negative, and by Lemma \ref{lemma:Phi_lips} we also have that $v\le g^*$. However, the function $v$ does not possess the required regularity: to address this, we apply Lemma  \ref{lemma:convergent_sequence} with $a=v$, which guarantees the existence of a sequence of functions $\{a_n\}_{n\ge 0}$ which are $C^{\infty}$ in both space and time such that
\begin{equation*}
    \frac{1}{n} \le a_n \le g^*+\frac{1}{n} \ \ \ \text{and} \ \ \ \left\|\omega\, \frac{a_n-v}{\sqrt{a_n}}\right\|_{L^2([0,T] \times [0,1])} \to 0
\end{equation*}
as $n\to\infty$. Note that as $\rho^1$ and $\rho^2$ are in $L^2([0,T] \times [0,1])$, $\omega$ also belongs to $ L^2([0,T] \times [0,1])$. For fixed $h \in C^2_0([0,1])$, consider the parabolic problem \eqref{eq:parabolic_pde} with $a$ replaced by $a_n$ and with $b(s,0)=\tilde \kappa \lambda$, $b(s,1)=\tilde\kappa \delta$. Then, from Lemma \ref{lemma:parabolic_uniqueness}, there exists a unique solution $\phi^n_s(u)$ to this problem. Let us now choose the test function in \eqref{eq:omega_formulation_2} to be $G_s(u)=\phi^n_s(u)$. Due to the boundary conditions satisfied by $\phi^n_t$, the two boundary integrals vanish, and thus we obtain 
\begin{equation*}
    \begin{split}
    \langle \omega_t , \phi^n_t\rangle &= \int_0^t \langle \omega_s, \partial_s \phi^n_s+  v_s \Delta \phi^n_s\rangle\de s.
    \end{split}
\end{equation*}
Since $\phi^n_t(u)=h(u)$, the integral on the left-hand side above is equal to $\langle \omega_t, h\rangle$.   To handle the integral on the right-hand side, we sum and subtract $ \langle \omega_s, a^n_s\Delta \phi^n_s\rangle$ and use the fact that 
\begin{equation*}
    \partial_s \phi^n_s + a^n_s \Delta \phi^n_s = 0,
\end{equation*}
in the interval $(0,1)$, so that
\begin{equation*}
    \begin{split}
    \langle \omega_t , h\rangle &=
    \int_0^t \langle \omega_s, ( v_s-a^n_s) \Delta \phi^n_s\rangle\de s .
   \end{split}
\end{equation*}
Now, applying the Cauchy-Schwarz inequality to the integral on the right-hand side above, we get that $\langle \omega_t , h\rangle$ is bounded from above by
\begin{equation*}
    \begin{split}
    \left(\int_0^t \int_0^1\Big\{ \omega_s(u) \frac{v_s(u)-a_s^n(u)}{\sqrt{a^n_s(u)}}\Big\}^2\de u\de s\right)^{\frac{1}{2}} \left(\int_0^t \int_0^1 a^n_s(u)\big\{  \Delta \phi^n_s(u)\big\}^2\de u\de s\right)^{\frac{1}{2}}.
   \end{split}
\end{equation*}
By Lemma \ref{lemma:parabolic_bound}, there exists a constant $C$ depending only on $\lambda, \delta, \theta$ ad $h$ such that
\begin{equation*}
    \begin{split}
    \langle \omega_t , h\rangle\le C\,\left\|\omega\,\frac{v-a^n}{\sqrt{a^n}}\right\|_{L^2([0,T]\times[0,1])} 
    \end{split}
\end{equation*}
Sending $n\to\infty$ and using Lemma \ref{lemma:convergent_sequence}, it follows that 
\begin{equation*}
    \langle \omega_t, h\rangle \le 0
\end{equation*}
for any $h \in C^2_0([0,1])$. Now consider a sequence of functions $\{h_j\}_{j\ge1}$ in $C^2_0([0,1])$ such that $h_j(\cdot) \to \boldsymbol{1}_{\{u \in [0,1]:\, \omega_t(u)>0\}}(\cdot)$ in $L^2([0,1])$ as $j\to\infty$. Then, from the previous inequality, we obtain 
\begin{equation*}
    \int_0^1 \omega^+(t,u) \de u \le 0,
\end{equation*}
where $\omega^+ := \mymax\{\omega, 0\}$ denotes the positive part of $\omega$. Therefore, for any $t \in [0,T]$, $\rho^1_t(u)\le \rho^2_t(u)$ for almost every $u \in [0,1]$; that is, $\rho^1\le\rho^2$ for almost every $(t,u) \in [0,T]\times[0,1]$. In the same way, $\rho^2 \leq \rho^1$ almost everywhere on $[0,T]\times[0,1]$, completing the proof.
\end{proof}

\section{Auxiliary Computations}

\subsection{Dirichlet Forms}
Recall the definition of $\mathscr{D}_N^0$ in \eqref{eq:dirichlet_bulk}, and define the following additional non-negative functions, defined for positive densities $f$ with respect to $\bar{\nu}_{\varphi}$:
\begin{align}
    \begin{split}&\mathscr{D}^l_{N}\left(\sqrt{f},\bar{\nu}_{\varphi}\right) := \int_{\Omega_N}\Bigg\{ \frac{\alpha}{N^{\theta}} \left[\sqrt{f(\eta^{1+})}-\sqrt{f(\eta)}\right]^2 \\&\phantom{\mathscr{D}^{\ell}_{N}\left(\sqrt{f},\bar{\nu}_{\varphi}\right) := \int\Bigg\{ }+\frac{\lambda g(\eta(1))}{N^{\theta}} \left[\sqrt{f(\eta^{1-})}-\sqrt{f(\eta)}\right]^2\Bigg\}\de \bar{\nu}_{\varphi},\end{split}\nonumber
    \\\begin{split}&\mathscr{D}^{r}_{N}\left(\sqrt{f}, \bar{\nu}_{\varphi}\right) :=\int_{\Omega_N} \Bigg\{ \frac{\beta}{N^{\theta}} \left[\sqrt{f(\eta^{(N-1)+})}-\sqrt{f(\eta)}\right]^2 
    \\&\phantom{\mathscr{D}^{r}_{N}\left(\sqrt{f}, \bar{\nu}_{\varphi}\right) :=\int \Bigg\{}+ \frac{\delta g(\eta(N-1))}{N^{\theta}} \left[\sqrt{f(\eta^{(N-1)-})}-\sqrt{f(\eta)}\right]^2\Bigg\}\de \bar{\nu}_{\varphi}.\nonumber
    \end{split}
\end{align}

\begin{lemma}\label{lemma:left_boundary} Let $f$ be a density with respect to $\bar{\nu}_{\varphi}$. If $\alpha\leq \lambda\varphi$, then
\begin{equation*}
    \left\langle \mathcal{L}^N_l \sqrt{f}, \sqrt{f} \right\rangle_{\bar\nu_\varphi}\le - \frac{1}{2}\mathscr{D}^l_N\left(\sqrt{f},\bar{\nu}_{\varphi}\right) + \frac{\lambda\varphi-\alpha}{2N^{\theta}}.
\end{equation*}
\end{lemma}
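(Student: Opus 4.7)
The strategy is the standard carré-du-champ computation. Starting from the algebraic identity $a(b-a) = -\tfrac{1}{2}(b-a)^2 + \tfrac{1}{2}(b^2 - a^2)$ with $a = \sqrt{f(\eta)}$ and $b = \sqrt{f(\eta^{1\pm})}$, I would multiply by $\sqrt{f(\eta)}$ and integrate against $\bar{\nu}_{\varphi}$, separately for each of the two terms in $\mathcal{L}^N_l\sqrt{f}$. The square terms assemble precisely into the Dirichlet form $\mathscr{D}^l_N(\sqrt{f},\bar{\nu}_{\varphi})$, giving
\begin{equation*}
\left\langle\mathcal{L}^N_l\sqrt{f}, \sqrt{f}\right\rangle_{\bar{\nu}_{\varphi}} = -\tfrac{1}{2}\mathscr{D}^l_N(\sqrt{f}, \bar{\nu}_{\varphi}) + \tfrac{1}{2}\int_{\Omega_N}\mathcal{L}^N_l f(\eta)\,\de\bar{\nu}_{\varphi}.
\end{equation*}
The problem therefore reduces to showing that the residual integral is at most $(\lambda\varphi - \alpha)/N^{\theta}$.

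To evaluate the residual, I would exploit the explicit product form of $\bar{\nu}_{\varphi}$. The marginal ratio satisfies $\bar{\nu}_{\varphi}(\xi^{1-})/\bar{\nu}_{\varphi}(\xi) = g(\xi(1))/\varphi$ for $\xi(1)\ge 1$, and since $g(0)=0$ this extends trivially to all configurations. Changing variables via $\xi = \eta^{1+}$ in one term and $\xi = \eta^{1-}$ in the other gives
\begin{equation*}
\int_{\Omega_N} f(\eta^{1+})\,\de\bar{\nu}_{\varphi} = \frac{1}{\varphi}\int_{\Omega_N} g(\eta(1)) f(\eta)\,\de\bar{\nu}_{\varphi}, \qquad \int_{\Omega_N} g(\eta(1)) f(\eta^{1-})\,\de\bar{\nu}_{\varphi} = \varphi\int_{\Omega_N} f\,\de\bar{\nu}_{\varphi}.
\end{equation*}
Plugging these identities back into $\int \mathcal{L}^N_l f\,\de\bar{\nu}_{\varphi}$, recalling that $f$ is a density so $\int f\,\de\bar{\nu}_{\varphi} = 1$, and collecting terms, the residual factorises as
\begin{equation*}
\frac{1}{N^{\theta}}(\lambda\varphi - \alpha)\left\{1 - \frac{1}{\varphi}\int_{\Omega_N} g(\eta(1)) f(\eta)\,\de\bar{\nu}_{\varphi}\right\}.
\end{equation*}

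To conclude, I would use the hypothesis $\alpha \le \lambda\varphi$ to ensure the prefactor is non-negative, together with $g\ge 0$ and $f\ge 0$, which force the bracket to be bounded above by $1$. Combining these, the residual is at most $(\lambda\varphi - \alpha)/N^{\theta}$, and restoring the factor $\tfrac{1}{2}$ yields the claimed bound. There is no serious obstacle in this argument: it is essentially bookkeeping with the canonical change-of-variables formula for zero-range invariant measures. The only point to handle carefully is that the hypothesis $\alpha \le \lambda\varphi$ is needed precisely to pass from factorisation to inequality, since without it the sign of the prefactor would be wrong and the upper bound would fail.
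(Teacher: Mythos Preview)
Your proof is correct and follows essentially the same approach as the paper's. The paper carries out the ``half plus half, add and subtract to complete the square'' manipulation explicitly rather than invoking the identity $a(b-a)=-\tfrac12(b-a)^2+\tfrac12(b^2-a^2)$, and it writes the residual as $\tfrac{\lambda\varphi-\alpha}{2N^\theta}+\big(\tfrac{\alpha}{2N^\theta\varphi}-\tfrac{\lambda}{2N^\theta}\big)\int g(\eta(1))f\,\de\bar\nu_\varphi$ before discarding the second term, which is exactly your factorisation unpackaged.
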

\begin{proof} Note that 
\begin{equation*}
    \begin{split}
    \left\langle \mathcal{L}^N_l \sqrt{f}, \sqrt{f} \right\rangle_{\bar\nu_\varphi}=&\int_{\Omega_N} \frac{\alpha}{N^{\theta}}\left[\sqrt{f(\eta^{1+})}-\sqrt{f(\eta)}\right]\sqrt{f(\eta)}\de \bar{\nu}_\varphi
    \\&+\int_{\Omega_N}\frac{\lambda g(\eta(1))}{N^{\theta}}\left[\sqrt{f(\eta^{1-})}-\sqrt{f(\eta)}\right]\sqrt{f(\eta)}\de \bar{\nu}_{\varphi}.
\end{split}
\end{equation*}
By writing the terms on the right-hand side of last display as their half plus their half, and summing
and subtracting appropriate terms to complete the squares, we see that
\begin{equation*}
    \begin{split}
    \left\langle \mathcal{L}^N_l \sqrt{f}, \sqrt{f} \right\rangle_{\bar\nu_\varphi}=& \int \frac{\alpha}{2N^{\theta}}\left[\sqrt{f(\eta^{1+})}-\sqrt{f(\eta)}\right]\sqrt{f(\eta)}\de \bar\nu_\varphi 
    \\&- \int_{\Omega_N} \frac{\alpha}{2N^{\theta}}\left[\sqrt{f(\eta^{1+})}-\sqrt{f(\eta)}\right]\sqrt{f(\eta^{1+})}\de \bar\nu_\varphi
    \\& +\int_{\Omega_N} \frac{\alpha}{2N^{\theta}}\left[\sqrt{f(\eta^{1+})}-\sqrt{f(\eta)}\right]\sqrt{f(\eta^{1+})}\de \bar\nu_\varphi
    \\&+\int_{\Omega_N} \frac{\alpha}{2N^{\theta}}\left[\sqrt{f(\eta^{1+})}-\sqrt{f(\eta)}\right]\sqrt{f(\eta)}\de \bar\nu_\varphi
    \\& +\int_{\Omega_N} \frac{\lambda g(\eta(1))}{2N^{\theta}}\left[\sqrt{f(\eta^{1-})}-\sqrt{f(\eta)}\right]\sqrt{f(\eta)}\de \bar\nu_\varphi
    \\&- \int_{\Omega_N} \frac{\lambda g(\eta(1))}{2N^{\theta}}\left[\sqrt{f(\eta^{1-})}-\sqrt{f(\eta)}\right]\sqrt{f(\eta^{1-})}\de \bar\nu_\varphi
    \\ & + \int_{\Omega_N} \frac{\lambda g(\eta(1))}{2N^{\theta}}\left[\sqrt{f(\eta^{1-})}-\sqrt{f(\eta)}\right]\sqrt{f(\eta^{1-})}\de \bar\nu_\varphi
    \\&+ \int_{\Omega_N} \frac{\lambda g(\eta(1))}{2N^{\theta}}\left[\sqrt{f(\eta^{1-})}-\sqrt{f(\eta)}\right]\sqrt{f(\eta)}\de \bar\nu_\varphi.
    \end{split}
\end{equation*}
By grouping the terms which yield the Dirichlet form $\mathscr{D}_N^l$ and expanding the product, we obtain
\begin{equation*}
    \left\langle \mathcal{L}^N_l \sqrt{f}, \sqrt{f} \right\rangle_{\bar\nu_\varphi}= - \frac{1}{2}\mathscr{D}^l_N\left(\sqrt{f}, \bar{\nu}_{\varphi}\right) + \mathscr{R}^l,
\end{equation*}
where
\begin{equation*}
    \begin{split}
    \mathscr{R}^l := & \int_{\Omega_N}\frac{\alpha}{2N^{\theta}}\left[f(\eta^{1+})-f(\eta)\right]\de \bar\nu_\varphi + \int_{\Omega_N} \frac{\lambda g(\eta(1))}{2N^{\theta}}\left[f(\eta^{1-})-f(\eta)\right]\de \bar\nu_\varphi.
    \end{split}
\end{equation*}
If $\bar\nu_\varphi$ were the invariant measure, we would have the identity $\mathscr{R}^l=\frac{1}{2}\int_{\Omega_N} \mathcal{L}^N_l f(\eta) \de \bar\nu_\varphi\equiv 0$; nonetheless, performing the change of variable  $\eta^{1+}\mapsto \eta$ and $\eta^{1-}\mapsto\eta$ and noting that $\frac{\bar \nu_\varphi (\eta^{1-})}{\bar\nu_\varphi(\eta)}=\frac{g(\eta_1)}{\varphi}$ and  that $\frac{\bar\nu_\varphi(\eta^{1+})}{\bar\nu_\varphi(\eta)}=\frac{\varphi}{g(\eta_1+1)}$, we obtain 
\begin{equation*}
    \mathscr{R}^l = \frac{\lambda \varphi - \alpha}{2N^{\theta}}\int_{\Omega_N} f(\eta)\de \bar\nu_\varphi + \left(\frac{\alpha}{2N^{\theta}\varphi} - \frac{\lambda}{2N^{\theta}} \right)\int_{\Omega_N} g(\eta(1))f(\eta)\de \bar\nu_\varphi.
\end{equation*}
Under the assumption $\alpha\le \lambda \varphi$ and since $f$ is a density, we can discard the rightmost term in the previous display, so the proof ends.
\end{proof}

\begin{lemma}\label{lemma:right_boundary} Let $f$ be the density with respect to $\bar\nu_\varphi$. If $\beta\le \delta\varphi$, then
\begin{equation*}
    \left\langle \mathcal{L}^N_r\sqrt{f}, \sqrt{f} \right\rangle_{\bar\nu_\varphi}\le-\frac{1}{2}\mathscr{D}^{r}_N
    \left(\sqrt{f}, \bar\nu_\varphi\right)+  \frac{\delta\varphi-\beta}{2N^{\theta}} .
\end{equation*}
\end{lemma}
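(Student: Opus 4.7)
The plan is to mirror, step for step, the argument given for Lemma \ref{lemma:left_boundary}, since $\mathcal{L}^N_r$ is obtained from $\mathcal{L}^N_l$ by replacing the site $1$ with $N-1$, the density $\alpha$ with $\beta$, and the removal rate $\lambda$ with $\delta$. Structurally, the only property of the site $1$ used in that proof is that the product measure $\bar{\nu}_\varphi$ has marginal at that site with the usual ratio identities, which hold equally well at the site $N-1$.

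First, I would write
\[
\left\langle \mathcal{L}^N_r \sqrt{f}, \sqrt{f}\right\rangle_{\bar\nu_\varphi}= \int_{\Omega_N}\frac{\beta}{N^\theta}\bigl[\sqrt{f(\eta^{(N-1)+})}-\sqrt{f(\eta)}\bigr]\sqrt{f(\eta)}\de\bar\nu_\varphi+\int_{\Omega_N}\frac{\delta g(\eta(N-1))}{N^\theta}\bigl[\sqrt{f(\eta^{(N-1)-})}-\sqrt{f(\eta)}\bigr]\sqrt{f(\eta)}\de\bar\nu_\varphi,
\]
split each integrand into two equal halves, and, for one of the halves, add and subtract $\sqrt{f(\eta^{(N-1)\pm})}$ in the rightmost factor to complete the square. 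This yields exactly
\[
\left\langle \mathcal{L}^N_r \sqrt{f}, \sqrt{f}\right\rangle_{\bar\nu_\varphi}=-\frac{1}{2}\mathscr{D}^r_N\bigl(\sqrt{f},\bar\nu_\varphi\bigr)+\mathscr{R}^r,
\]
with
\[
\mathscr{R}^r:=\int_{\Omega_N}\frac{\beta}{2N^\theta}\bigl[f(\eta^{(N-1)+})-f(\eta)\bigr]\de\bar\nu_\varphi+\int_{\Omega_N}\frac{\delta g(\eta(N-1))}{2N^\theta}\bigl[f(\eta^{(N-1)-})-f(\eta)\bigr]\de\bar\nu_\varphi.
\]

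Next, I would evaluate $\mathscr{R}^r$ by the two changes of variable $\eta\mapsto\eta^{(N-1)+}$ and $\eta\mapsto\eta^{(N-1)-}$, using the identities
\[
\frac{\bar\nu_\varphi(\eta^{(N-1)+})}{\bar\nu_\varphi(\eta)}=\frac{\varphi}{g(\eta(N-1)+1)},\qquad\frac{\bar\nu_\varphi(\eta^{(N-1)-})}{\bar\nu_\varphi(\eta)}=\frac{g(\eta(N-1))}{\varphi},
\]
valid because $\bar\nu_\varphi$ is the product measure with marginal \eqref{eq:prod_measure} at site $N-1$. After the standard simplifications these produce
\[
\mathscr{R}^r=\frac{\delta\varphi-\beta}{2N^\theta}\int_{\Omega_N} f(\eta)\de\bar\nu_\varphi+\Bigl(\frac{\beta}{2N^\theta\varphi}-\frac{\delta}{2N^\theta}\Bigr)\int_{\Omega_N} g(\eta(N-1))f(\eta)\de\bar\nu_\varphi.
\]
Since $f$ is a probability density, the first term equals $(\delta\varphi-\beta)/(2N^\theta)$. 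The hypothesis $\beta\le\delta\varphi$ gives $\beta/(2N^\theta\varphi)-\delta/(2N^\theta)\le 0$, so, since $g\ge 0$ and $f\ge 0$, the second term is non-positive and can be discarded, producing the desired inequality.

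The whole argument is essentially a bookkeeping adaptation of Lemma \ref{lemma:left_boundary}; no conceptually new step is required. The only minor care point is getting the ratio formulas at site $N-1$ correct (in particular checking the sign of the coefficient $\beta/(\varphi N^\theta)-\delta/N^\theta$ to ensure that the condition $\beta\le\delta\varphi$ is the right one to drop the unwanted term).
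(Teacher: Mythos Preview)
Your proposal is correct and follows exactly the approach the paper intends: the paper's own proof simply reads ``It follows from the exact same argument given in the proof of Lemma~\ref{lemma:left_boundary},'' and what you have written is precisely that argument with the substitutions $1\mapsto N-1$, $\alpha\mapsto\beta$, $\lambda\mapsto\delta$ carried out explicitly.
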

\begin{proof} It follows from the exact same argument given in the proof of Lemma \ref{lemma:left_boundary}.\end{proof}

\begin{lemma}\label{lemma:bulk}
Let $f$ be a density with respect to $\bar\nu_\varphi$: then,
\begin{equation*}
    \left\langle \mathcal{L}^N_0\sqrt{f}, \sqrt{f} \right\rangle_{\bar\nu_\varphi}=-\frac{1}{2}\mathscr{D}_N^0
    \left(\sqrt{f}, \bar\nu_\varphi\right).
\end{equation*}
\end{lemma}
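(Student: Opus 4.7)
The plan is to establish that $\bar{\nu}_{\varphi}$ is reversible for $\mathcal{L}^N_0$ by checking detailed balance on every internal bond, and then apply the standard symmetrisation argument to match the quadratic form $\mathscr{D}^0_N$ exactly.

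First, because $\bar{\nu}_{\varphi}$ is the product measure with constant fugacity $\varphi$, a direct computation from \eqref{eq:prod_measure} yields, for any nearest-neighbour pair $x,y\in I_N$ and any configuration $\eta$ with $\eta(x)\ge 1$,
\[
g(\eta(x))\,\bar{\nu}_{\varphi}(\eta)=g(\eta^{x,y}(y))\,\bar{\nu}_{\varphi}(\eta^{x,y}),
\]
since the ratio of densities is $\bar{\nu}_{\varphi}(\eta^{x,y})/\bar{\nu}_{\varphi}(\eta)=g(\eta(x))/g(\eta(y)+1)$ (terms with $\eta(x)=0$ contribute nothing on either side). This is detailed balance bond-by-bond; it relies essentially on the constancy of $\varphi$ in $x$ and would fail for $\bar\nu_N$, whose fugacity profile $\bar\varphi_N(\cdot)$ varies with $x$.

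Second, I would expand
\[
\langle \mathcal{L}^N_0 \sqrt{f},\sqrt{f}\rangle_{\bar{\nu}_{\varphi}}=\sum_{\substack{x,y\in I_N\\|x-y|=1}}\int g(\eta(x))\bigl[\sqrt{f(\eta^{x,y})}-\sqrt{f(\eta)}\bigr]\sqrt{f(\eta)}\,\de \bar{\nu}_{\varphi},
\]
and pair each oriented bond $(x,x+1)$ with its reverse $(x+1,x)$. In the reverse integral, the change of variable $\eta\mapsto\eta^{x,x+1}$ (a bijection on $\{\eta:\eta(x)\ge 1\}$), combined with the detailed balance identity above, transports the rate and the measure cleanly; a polarisation of the form $2ab-a^2-b^2=-(a-b)^2$ then converts the pair of oriented integrals into
\[
-\tfrac{1}{2}\int g(\eta(x))\bigl[\sqrt{f(\eta^{x,x+1})}-\sqrt{f(\eta)}\bigr]^2\de \bar{\nu}_{\varphi} -\tfrac{1}{2}\int g(\eta(x+1))\bigl[\sqrt{f(\eta^{x+1,x})}-\sqrt{f(\eta)}\bigr]^2\de \bar{\nu}_{\varphi}.
\]
The ``cross'' contribution $\int g(\eta(x))\sqrt{f(\eta^{x,x+1})}\sqrt{f(\eta)}\,\de \bar{\nu}_{\varphi}$ is symmetric under the swap and is produced equally by both orientations, while the ``diagonal'' terms $\int g(\eta(x))f(\eta)\,\de \bar{\nu}_{\varphi}$ and $\int g(\eta(x))f(\eta^{x,x+1})\,\de \bar{\nu}_{\varphi}$ are identified by the same change of variable, so the linear remainders cancel exactly.

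Summing the resulting expression over $x=1,\dots,N-2$ reproduces precisely the two sums in the definition \eqref{eq:dirichlet_bulk} of $\mathscr{D}^0_N$, giving $\langle\mathcal{L}^N_0\sqrt{f},\sqrt{f}\rangle_{\bar{\nu}_{\varphi}}=-\tfrac{1}{2}\mathscr{D}^0_N(\sqrt{f},\bar{\nu}_{\varphi})$. I do not anticipate a substantive obstacle: unlike in Lemmas \ref{lemma:left_boundary} and \ref{lemma:right_boundary}, no residual term of order $N^{-\theta}$ appears, because detailed balance holds exactly on each internal bond; the only delicate item is the bookkeeping of the two orientations, so that each bond is counted once in each direction as in \eqref{eq:dirichlet_bulk}.
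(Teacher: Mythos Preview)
Your proposal is correct and follows essentially the same approach as the paper: both arguments rest on the change of variable $\eta\mapsto\eta^{x,x+1}$ together with the detailed balance identity for $\bar\nu_\varphi$ (which the paper uses implicitly in its change-of-variable step), and both then complete the square to recover $\mathscr{D}^0_N$. Your version is slightly more conceptual in isolating detailed balance up front, while the paper proceeds by a mechanical ``half plus half, add and subtract'' expansion, but the underlying mechanism and bookkeeping are the same.
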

\begin{proof}
Note that 
\begin{equation}\label{eq:lemma_bulk}
    \begin{split}
    \left\langle \mathcal{L}^N_0\sqrt{f}, \sqrt{f} \right\rangle_{\bar\nu_\varphi} =& \int_{\Omega_N} \sum_{x=1}^{N-2} g(\eta(x))\left[\sqrt{f(\eta^{x,x+1})}-\sqrt{f(\eta)}\right]\sqrt{f(\eta)} \de \bar\nu_\varphi
    \\&+\int\sum_{x=2}^{N-1} g(\eta(x)) \left[\sqrt{f(\eta^{x,x-1})}-\sqrt{f(\eta)}\right]\sqrt{f(\eta)} \de \bar{\nu}_\varphi.
    \end{split}
\end{equation}
By writing the term at the left hand side of \eqref{eq:lemma_bulk} as its half plus its half and summing and subtracting an appropriate term to complete the square, we have that
\begin{equation*}
    \begin{split}
    \left\langle \mathcal{L}^N_0\sqrt{f}, \sqrt{f} \right\rangle_{\bar\nu_\varphi} = &-\frac{1}{2}\mathscr{D}^0_N
    \left(\sqrt{f},\bar{\nu}_{\varphi}\right)
    \\&+\frac{1}{2}\int_{\Omega_N}\sum_{x=1}^{N-2} g(\eta(x))\left[\sqrt{f(\eta^{x,x+1})}-\sqrt{f(\eta)}\right]\sqrt{f(\eta)} \de \bar{\nu}_\varphi
    \\&+\frac{1}{2}\int_{\Omega_N} \sum_{x=1}^{N-2} g(\eta(x))\left[\sqrt{f(\eta^{x,x+1})}-\sqrt{f(\eta)}\right]\sqrt{f(\eta^{x,x+1})} \de \bar{\nu}_\varphi
    \\&+\frac{1}{2}\int_{\Omega_N} \sum_{x=2}^{N-1} g(\eta(x)) \left[\sqrt{f(\eta^{x,x-1})}-\sqrt{f(\eta)}\right]\sqrt{f(\eta)} \de \bar{\nu}_\varphi
    \\&+\frac{1}{2}\int_{\Omega_N} \sum_{x=2}^{N-1} g(\eta(x)) \left[\sqrt{f(\eta^{x,x-1})}-\sqrt{f(\eta)}\right]\sqrt{f(\eta^{x,x-1})} \de \bar{\nu}_\varphi. 
    \end{split}
\end{equation*}
By performing an appropriate change of variable in the third and fifth lines, we can rewrite the equation above as
\begin{equation*}
    \begin{split}
    \left\langle \mathcal{L}^N_0\sqrt{f}, \sqrt{f} \right\rangle_{\bar\nu_\varphi} =& -\frac{1}{2}\mathscr{D}^0_N
    \left(\sqrt{f},\bar{\nu}_{\varphi}\right)
    \\&+\frac{1}{2}\int_{\Omega_N} \sum_{x=1}^{N-2} g(\eta(x))\left[\sqrt{f(\eta^{x,x+1})}-\sqrt{f(\eta)}\right]\sqrt{f(\eta)} \de \bar{\nu}_\varphi
    \\&+\frac{1}{2}\int_{\Omega_N} \sum_{x=2}^{N-1} g(\eta(x))\left[\sqrt{f(\eta)}-\sqrt{f(\eta^{x,x-1})}\right]\sqrt{f(\eta)} \de \bar{\nu}_\varphi
    \\&+\frac{1}{2}\int_{\Omega_N} \sum_{x=2}^{N-1} g(\eta(x)) \left[\sqrt{f(\eta^{x,x-1})}-\sqrt{f(\eta)}\right]\sqrt{f(\eta)} \de \bar{\nu}_\varphi
    \\&+\frac{1}{2}\int_{\Omega_N} \sum_{x=1}^{N-2} g(\eta(x)) \left[\sqrt{f(\eta)}-\sqrt{f(\eta^{x,x+1})}\right]\sqrt{f(\eta)} \de \bar{\nu}_\varphi.
    \end{split}
\end{equation*}
But then, it is not hard to see that the sum of the second to fifth line vanishes, so our claim follows. \end{proof}

\subsection{Relative Entropy Bound} \label{sec:app_entropy}
In this brief section, we prove the following statement:

\begin{lemma} If a sequence of measures $\{\mu^N\}_N$ on $\Omega_N$ satisfies the relative entropy bound $H(\mu^N|\bar\nu_N)\lesssim N$, then it also satisfies $H(\mu^N|\bar\nu_\varphi)\lesssim N$ for any $\varphi\ge\mysup_{x\in I_N}\bar\varphi_N(x)$.
\end{lemma}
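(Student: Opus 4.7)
The natural starting point is the straightforward identity
\begin{equation*}
   H(\mu^N|\bar\nu_\varphi) \;=\; H(\mu^N|\bar\nu_N) \;+\; \int \log\frac{d\bar\nu_N}{d\bar\nu_\varphi}\,d\mu^N,
\end{equation*}
which is immediate from the definition $H(\mu|\nu) = \int \log(d\mu/d\nu)\,d\mu$ applied twice and using the chain rule $\log(d\mu^N/d\bar\nu_\varphi) = \log(d\mu^N/d\bar\nu_N) + \log(d\bar\nu_N/d\bar\nu_\varphi)$. Since the first term on the right-hand side is $\lesssim N$ by hypothesis, the entire task reduces to showing that $\int \log(d\bar\nu_N/d\bar\nu_\varphi)\,d\mu^N \lesssim N$.

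The second ingredient is the product structure of both measures, which gives the explicit Radon-Nikodym derivative
\begin{equation*}
   \log\frac{d\bar\nu_N}{d\bar\nu_\varphi}(\eta) \;=\; \sum_{x\in I_N}\left\{\log\frac{Z(\varphi)}{Z(\bar\varphi_N(x))} \;+\; \eta(x)\,\log\frac{\bar\varphi_N(x)}{\varphi}\right\}.
\end{equation*}
The hypothesis $\varphi\ge\sup_{x\in I_N}\bar\varphi_N(x)$, combined with the fact that $Z$ is strictly increasing, yields $\log(\bar\varphi_N(x)/\varphi)\le 0$ and $Z(\varphi)/Z(\bar\varphi_N(x))\ge 1$ for every $x$. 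Note also that $\varphi<\varphi^*$ (a consequence of condition \eqref{eq:params_condition} together with the choice $\varphi$ being an upper bound of $\bar\varphi_N(x)$), so $Z(\varphi)<\infty$.

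Finally, integrating the previous display against $\mu^N$ and using that $\eta(x)\ge 0$ together with the sign observations above, the sum in $x$ of the terms involving $\eta(x)$ is non-positive, while each logarithm $\log(Z(\varphi)/Z(\bar\varphi_N(x)))$ is bounded by the constant $\log Z(\varphi)$. This gives
\begin{equation*}
   \int \log\frac{d\bar\nu_N}{d\bar\nu_\varphi}\,d\mu^N \;\le\; (N-1)\log Z(\varphi) \;\lesssim\; N,
\end{equation*}
which, combined with the opening identity and the hypothesis on $H(\mu^N|\bar\nu_N)$, closes the argument. There is no real obstacle here: the computation is essentially a one-line consequence of the product form of $\bar\nu_N$ and $\bar\nu_\varphi$, and the only subtlety worth flagging is the implicit assumption $\bar\varphi_N(x)>0$ (which is guaranteed whenever the parameters $\alpha,\beta,\lambda,\delta$ are strictly positive and $N$ large enough, so that the numerator in \eqref{eq:fugacity} stays strictly positive on $I_N$).
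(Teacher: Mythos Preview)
Your proof is correct and follows essentially the same approach as the paper: the chain-rule identity for relative entropy, the explicit product-form Radon--Nikodym derivative, and the sign observation $\log(\bar\varphi_N(x)/\varphi)\le 0$ are all identical. The only cosmetic difference is in the final bound on $\log\bigl(Z(\varphi)/Z(\bar\varphi_N(x))\bigr)$: you use the trivial lower bound $Z(\bar\varphi_N(x))\ge 1$ coming from the $k=0$ term of the partition function, while the paper instead bounds $Z(\bar\varphi_N(x))\ge Z(\inf_x\bar\varphi_N(x))\ge C(\alpha,\beta,\lambda,\delta,\theta)$ via monotonicity of $Z$---your version is in fact slightly cleaner.
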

\begin{proof} Note that
\begin{align*}
    H(\mu^N| \bar\nu_{\varphi})&=\sum_{\eta\in\Omega_N}\mu^N(\eta) \mylog\left(\frac{\mu^N(\eta)}{\bar\nu_N(\eta)}\cdot \frac{\bar\nu_N(\eta)}{\bar\nu_\varphi(\eta)}\right) \\&=H(\mu^N|\bar\nu_N)+\sum_{\eta\in\Omega_N}\mu^N(\eta) \mylog\left(\frac{\bar\nu_N(\eta)}{\bar\nu_\varphi(\eta)}\right).
\end{align*}
A simple computation shows that 
\begin{align*}
    \mylog\left(\frac{\bar\nu_N(\eta)}{\bar\nu_{\varphi}(\eta)}\right)&= \sum_{x\in I_N} \left\{\mylog \left(\frac{Z(\varphi)}{Z(\bar\varphi_N(x))}\right)+\eta(x)\mylog\left(\frac{\bar\varphi_N(x)}{\varphi}\right)\right\}
    \\&\le N\mysup_{x\in I_N}\mylog \left(\frac{Z(\varphi)}{Z(\bar\varphi_N(x))}\right),
\end{align*}
where we used the fact that $\mylog\left(\frac{\bar\varphi_N(x)}{\varphi}\right)\le0$. But now, for each $x\in I_N$ we have that
\begin{equation*}
    Z(\bar\varphi_N(x)) =\sum_{k\ge0}\frac{\bar\varphi_N(x)^k}{g(k)!} \ge \sum_{k\ge0}\frac{\left\{\myinf_{x\in I_N}\bar\varphi_N(x)\right\}^k}{g(k)!}\ge C,
\end{equation*} 
where $C$ is a positive constant which only depends on the parameters $\alpha, \lambda, \beta, \delta$ and $\theta$. This implies that $\mylog \left(\frac{Z(\varphi)}{Z(\bar\varphi_N(x))}\right)$ is uniformly bounded from above in $N$, hence the claim follows.\end{proof}

\bibliographystyle{Martin} 
\bibliography{references}

\end{document}